\documentclass{amsart}

\usepackage[margin=1in]{geometry} 
\usepackage{lipsum}

\usepackage{tikz-cd}
\usepackage[unicode]{hyperref}
\usepackage{graphicx,color}
\usepackage{amssymb,amsmath}
\usepackage{mathtools}
\usepackage{mathrsfs}
\usepackage{enumerate}
\usepackage{tkz-graph}
\usepackage{ragged2e}
\usepackage{enumitem,pifont}
\usepackage{xypic}
\usepackage{tikz}
\usetikzlibrary{arrows,decorations.pathmorphing,automata,backgrounds}
\usetikzlibrary{backgrounds,positioning}
\tikzset{plain/.style={circle,draw,very thick,
		inner sep=0pt,minimum size=6mm}}
\tikzset{empty/.style={circle,inner sep=0pt,minimum size=6mm}}
\tikzset{emptyvt/.style={circle,inner sep=0pt,minimum size=0mm}}
\usepackage{caption}
\usepackage{subcaption}
\usepackage{float}

\usepackage{egothic}
\usepackage[T1]{fontenc}
\usepackage{yfonts}

\usepackage{dbnsymb}

\definecolor{ao}{rgb}{0.0, 0.5, 0.0}

\newcommand{\Z}{\mathbb{Z}}

\newcommand{\V}{\mathsf{V}}
\newcommand{\W}{\mathsf{W}}

\newcommand{\bV}{\mathbf{V}}
\newcommand{\bW}{\mathbf{W}}
\newcommand{\bE}{\mathbf{E}}

\newcommand{\sV}{\mathsf{V}}
\newcommand{\sE}{\mathsf{E}}

\newcommand{\sW}{\mathsf{W}}

\newcommand{\tin}{\textnormal{in}}
\newcommand{\tout}{\textnormal{out}}
\newcommand{\trace}{\textgoth{t}}

\newcommand{\calA}{\mathcal{A}}

\newcommand{\calI}{\mathcal{I}}

\newcommand{\calF}{\mathcal{F}}
\newcommand{\calG}{\mathcal{G}}
\newcommand{\calS}{\mathcal{S}}

\newcommand{\id}{\operatorname{id}}
\newcommand{\Id}{\operatorname{Id}}
\newcommand{\colim}{\operatorname{colim}}
\newcommand{\Hom}{\operatorname{Hom}}
\newcommand{\End}{\operatorname{End}}
\newcommand{\Alg}{\operatorname{Alg}}


\newcounter{dummy} \numberwithin{dummy}{section}

\newtheorem{thm}[dummy]{Theorem}
\newtheorem{prop}[dummy]{Proposition}
\newtheorem{lemma}[dummy]{Lemma}

\newtheorem*{thm*}{Theorem}
\newtheorem*{prop*}{Proposition}

\theoremstyle{definition}
\newtheorem{definition}[dummy]{Definition}
\newtheorem{cons}[dummy]{Construction}
\newtheorem{example}[dummy]{Example}
\newtheorem{remark}[dummy]{Remark}

\numberwithin{equation}{section}

\usepackage{multicol}

\title{Circuit Algebras are wheeled props}

\author[Z. Dancso]{Zsuzsanna Dancso}
\address{School of Mathematics and Statistics\\ The University of Sydney\\ Sydney, NSW, Australia}
\email{zsuzsanna.dancso@sydney.edu.au}

\author[I. Halacheva]{Iva Halacheva}
\address{Department of Mathematics \\ Northeastern University \\ Boston, Massachusetts, USA}
\email{i.halacheva@northeastern.edu}

\author[M. Robertson]{Marcy Robertson}
\address{School of Mathematics and Statistics \\ The University of Melbourne \\ Melbourne, Victoria, Australia}
\email{marcy.robertson@unimelb.edu.au}

\date{\today}

\begin{document}

\begin{abstract} 
Circuit algebras, introduced by Bar-Natan and the first author, are a generalization of Jones's planar algebras, in which one drops the planarity condition on ``connection diagrams''. They provide a useful language for the study of virtual and welded tangles in low-dimensional topology. In this note, we present the circuit algebra analogue of the well-known classification of planar algebras as pivotal categories with a self-dual generator. Our main theorem is that there is an equivalence of categories between circuit algebras and the category of linear wheeled props -- a type of strict symmetric tensor category with duals that arises in homotopy theory, deformation theory and the Batalin-Vilkovisky quantization formalism. 
	
\end{abstract}
	
\maketitle\bibliographystyle{amsalpha}

\section{Introduction}

In~\cite{Jones:PA}, Jones introduced the notion of a planar algebra as an axiomatization of the standard invariant of a finite index subfactor. A planar algebra is an algebraic structure whose operations are parametrized by {\em planar tangles}. A planar tangle is a $1$-manifold with boundary, embedded in a ``disc with $r$ holes'', where the boundary points of the $1$-manifold lie on the boundary circles of 
the disc with holes: an example is shown in Figure~\ref{fig:WiringDiagram} on the left.
Planar tangles form a coloured operad where composition is defined by gluing the outer circle of one tangle into an inner circle of another, as long as the tangle endpoints match (eg. \cite{Jones:PA}, \cite[Definition 2.2]{BHP12}). 
Given a field $\Bbbk$ of characteristic zero, a \emph{planar algebra} is a sequence of $\Bbbk$-vector spaces, which admit an action by the operad of planar tangles (\cite{Jones:PA},\cite[Definition 2.4]{BHP12}).

\begin{figure}[h]
	\includegraphics[width=9cm]{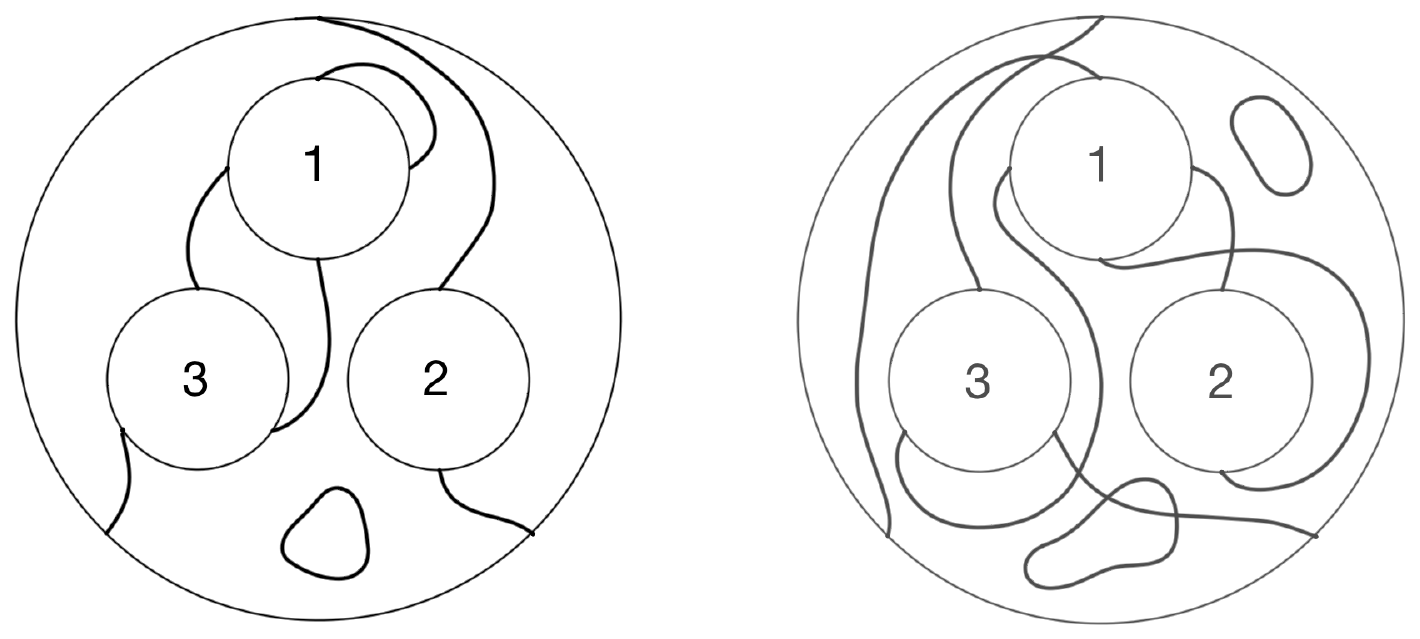}
	\caption{A planar tangle on the left, a wiring diagram on the right.}\label{fig:WiringDiagram}
\end{figure}

Planar algebras arise in many contexts where a tensor category with a ``good'' notion of duals is involved, and have played an important role in the theories of subfactors, conformal and quantum field theories and knot and tangle invariants. There is a well-known classification of planar algebras as \emph{pivotal categories} with a symmetrically self-dual generator (\cite{MR2559686}, \cite{MR3624399}, \cite{BHP12}). Pivotal categories are rigid tensor categories in which every object is isomorphic to its double dual (Section~\ref{section: categorical description}).

{\em Circuit algebras} were defined by Bar-Natan and the first author in \cite{BND:WKO2} as a generalization of planar algebras which provides a convenient language for \emph{virtual} and \emph{welded} tangles in low-dimensional topology. The term, inspired by electrical circuits, was coined by Bar-Natan. Circuit algebras are defined similarly to planar algebras, but their operations are parametrized by not necessarily planar {\em wiring diagrams} (Definition~\ref{def:WD}). Wiring diagrams can be defined similarly to planar tangles, but with embedded submanifolds replaced by abstract $1$-manifolds whose boundary points are identified with points on the boundary circles (Figure~\ref{fig:WiringDiagram}). This results in a purely combinatorial -- no longer topological -- structure; we discuss this distinction in detail in Section~\ref{sec:CA}. 

Wiring diagrams can be composed in the same manner as planar tangles, making the collection of wiring diagrams into a coloured operad. A circuit algebra is an algebra over this operad: a collection of vector spaces along with linear maps between them, parametrized by wiring diagrams. In Section~\ref{sec:CA} we present the definition of circuit algebras in detail. In Section~\ref{sec:Ex} we present an example from knot theory, and explain the relationship to planar algebras in more detail. 

Circuit algebras are more recent and, so far, not as widely studied as planar algebras. In the last few years a number of authors have used circuit algebras to study invariants for virtual and welded tangles (\cite{BND:WKO2}, \cite{DF}, \cite{Hal16},\cite{Tub}). In this paper we expand the definition of \cite{BND:WKO2} to include more detail and improve accessibility for a wider audience.   In Section~\ref{sec:Ex}, we present the example of virtual tangles, which can be defined as both a planar algebra and a circuit algebra. Indeed, every circuit algebra has an underlying planar algebra, as we prove in Proposition~\ref{prop: adjunction PA and CA} :
\begin{prop*}
	There exists a pair of adjoint functors $\begin{tikzcd} \mathsf{CA} \arrow[r, hook, shift right =1] & \mathsf{PA}\arrow[l, shift right =1]\end{tikzcd}$.  
\end{prop*}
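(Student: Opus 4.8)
The plan is to realize the adjunction as the restriction–extension pair coming from an inclusion of the colored operads that govern the two structures. First I would make precise that Jones's planar tangles form a suboperad $\mathcal{P}$ of the colored operad $\mathcal{W}$ of wiring diagrams (Definition~\ref{def:WD}): a planar tangle is exactly a wiring diagram that happens to be planar, the two families share the same colours (the boundary data recorded on the bounding circles), and operadic gluing of planar tangles is the restriction of the gluing of wiring diagrams. This produces an inclusion of colored operads $\iota\colon\mathcal{P}\hookrightarrow\mathcal{W}$.

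Since a circuit algebra is by definition an algebra over $\mathcal{W}$ and a planar algebra is an algebra over $\mathcal{P}$ (\cite{Jones:PA}, \cite[Definition 2.4]{BHP12}), we may write $\mathsf{CA}=\Alg(\mathcal{W})$ and $\mathsf{PA}=\Alg(\mathcal{P})$ in $\Bbbk$-vector spaces. Restriction along $\iota$ then gives a functor $\iota^*\colon\mathsf{CA}\to\mathsf{PA}$ that remembers the action of a circuit algebra only on the planar wiring diagrams; this is precisely the \emph{underlying planar algebra}, and it is the faithful forgetful functor depicted by the hooked arrow. It is automatically a right adjoint, since restriction preserves all limits.

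The content of the statement is therefore the construction of a left adjoint $\iota_!\colon\mathsf{PA}\to\mathsf{CA}$, the \emph{free circuit algebra} on a planar algebra. I would obtain $\iota_!$ as the operadic left Kan extension along $\iota$: for a planar algebra $A$, the circuit algebra $\iota_!A$ is computed as a reflexive coequalizer — a relative composite product $\mathcal{W}\circ_{\mathcal{P}}A$ — that freely adjoins the non-planar operations while imposing the existing $\mathcal{P}$-action. Because $\mathsf{Vect}_\Bbbk$ is cocomplete and $\otimes$ commutes with colimits in each variable, these colimits exist and are preserved, so $\iota_!$ is well defined; the triangle identities, and hence $\iota_!\dashv\iota^*$, then follow formally from the universal property of the coequalizer.

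The main obstacle I anticipate is not conceptual but structural bookkeeping at the operad level: one must verify that $\mathcal{P}$ and $\mathcal{W}$ really are the same flavour of colored operad, so that a single restriction/left-Kan-extension machine applies. In particular I would need to check that the planar (cyclic) composition of planar tangles embeds faithfully into the symmetric composition underlying wiring diagrams, and that the colour sets together with their symmetry actions match up under $\iota$. Once this inclusion is set up cleanly, the adjunction is a formal consequence of the standard theory of algebras over a map of operads in a cocomplete closed symmetric monoidal category.
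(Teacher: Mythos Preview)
Your proposal is correct and follows essentially the same route as the paper: both arguments verify that the operad of oriented planar tangles is a coloured sub-operad of the operad of wiring diagrams (the paper does this as Lemma~\ref{lem:PTWD} and Proposition~\ref{prop:suboperad}), and then invoke the general restriction/left-Kan-extension adjunction $\phi_! \dashv \phi^*$ for algebras along a map of coloured operads. The only difference in emphasis is that the paper is more explicit about the sub-operad verification and more terse about the existence of the left adjoint (simply citing \cite{bm_resolutions}), whereas you sketch the coequalizer construction of $\iota_!$ and leave the sub-operad bookkeeping as the anticipated obstacle.
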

\noindent There is no expectation, however, that this adjunction should be an equivalence. As just one example, classical tangles from knot theory admit a planar algebra structure, but not a circuit algebra structure.

The main result of this paper is a classification result for circuit algebras in terms of {\em linear wheeled props}, analogous to that of planar algebras via pivotal categories. A linear prop is a strict symmetric tensor category whose objects are generated by a single object. \emph{Wheeled props} \cite{mms}, are rigid props or, equivalently, strict symmetric tensor categories with duals which are generated by a single object. Wheeled props arise naturally in deformation theory and the Batalin--Vilkovisky quantization formalism of theoretical physics, invariant theory, and other abstract settings where a generalized trace operation plays a role (\cite{MR2648322}, \cite{MR2762550}, \cite{CFP20}, \cite{DM19}). Our main theorem (Theorem~\ref{thm:main}) is the following:
\begin{thm*}
	There is an equivalence of categories between circuit algebras and linear wheeled props $$\mathsf{CA}\cong\mathsf{wPROP}.$$
\end{thm*}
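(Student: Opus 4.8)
The plan is to construct a pair of functors in both directions and show they are mutually inverse (up to natural isomorphism). The key observation is that both sides are ``the same'' combinatorial data packaged differently: a circuit algebra is an algebra over the operad of wiring diagrams, while a linear wheeled prop is a strict symmetric tensor category with duals generated by a single self-dual object. The bridge between them is that wiring diagrams \emph{are} precisely the morphisms of the free linear wheeled prop on one generator. First I would make this precise by identifying the coloured operad of wiring diagrams with the endomorphism operad assembled from the hom-spaces of the free wheeled prop $\mathsf{W}$ on a single object: a wiring diagram with $r$ input circles (carrying $k_1,\dots,k_r$ strands) and one output circle (carrying $n$ strands) should correspond to a morphism $\bigotimes_i X^{\otimes k_i} \to X^{\otimes n}$ built from the symmetric structure (crossings), the duality/evaluation-coevaluation maps (the $U$-turns and the wheeled contractions), and identities. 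The absence of the planarity constraint is exactly what allows arbitrary permutations, i.e.\ the full symmetric rather than the braided/planar structure, so this is where the wheeled (as opposed to merely pivotal) structure enters.

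Next I would build the two functors explicitly. Given a circuit algebra $V = (V_n)_n$, I would define a wheeled prop whose single generating object is the ``strand'' colour, set the hom-space $\Hom(X^{\otimes m}, X^{\otimes n})$ to be $V_{m+n}$ (using duality to trade inputs for outputs, so that a morphism $X^{\otimes m}\to X^{\otimes n}$ is recorded as an element attached to a circle with $m+n$ boundary points), and define composition, tensor product, symmetry, and the contraction/trace maps $\trace$ using the circuit algebra action of the corresponding elementary wiring diagrams. Conversely, given a linear wheeled prop $P$ with generator $X$, I would define $V_n := \Hom_P(\mathbf{1}, X^{\otimes n})$ (or the appropriate $(0,n)$-hom-space), and define the action of a wiring diagram by interpreting it as the composite morphism in $P$ it names; functoriality of the wheeled prop structure guarantees this respects operadic composition of wiring diagrams. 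The self-duality of $X$ is what makes $V_n$ well-defined independently of how one splits $n$ into inputs and outputs.

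The main technical step, and the place I expect the real work to live, is verifying that the operad of wiring diagrams is genuinely generated by the elementary pieces (identity strand, crossing, cup/cap, and the trace/contraction loop) subject to exactly the relations that define a wheeled prop --- the symmetric-monoidal axioms, the snake/zig-zag duality identities, and the compatibility of the trace with composition and tensor. In other words, I need a presentation result: a normal-form or generators-and-relations description showing that two wiring diagrams are equal as operad elements if and only if the corresponding morphisms are equal in the free wheeled prop. This is essentially a coherence theorem, and the subtlety is bookkeeping the genus/loop components of wiring diagrams (closed circuits with no boundary) against the trace operation of the prop; one must check that closed loops correspond correctly to the scalar produced by $\trace$ of the identity and that this is handled consistently under gluing.

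Once the presentation is established, the remaining steps are formal. I would check that the two constructions are functorial (a morphism of circuit algebras is a sequence of linear maps commuting with all wiring-diagram actions, which translates to a strict symmetric monoidal functor preserving the duality and trace, and vice versa), and then exhibit natural isomorphisms $V \cong V'' $ and $P \cong P''$ showing the round-trips recover the original data. Here the identifications $\Hom(X^{\otimes m}, X^{\otimes n}) \cong V_{m+n}$ on one side and $V_n \cong \Hom(\mathbf{1}, X^{\otimes n})$ on the other are inverse bijections precisely because of the self-dual generator, so the composite is naturally isomorphic to the identity on each category. Assembling these yields the claimed equivalence $\mathsf{CA} \cong \mathsf{wPROP}$.
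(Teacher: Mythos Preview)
Your proposal contains a genuine mismatch with the objects the paper actually treats. The paper's circuit algebras are \emph{oriented}: the vector spaces are $\V[S^{\tout};S^{\tin}]$, indexed by a \emph{pair} of label sets, and correspondingly a wheeled prop here is an $\calS$-bimodule $\{\sE[I;J]\}$ with genuinely distinct incoming and outgoing data. The generating object $x$ of the associated tensor category is \emph{not} self-dual; edges carry a direction. Your construction rests on self-duality --- setting $\Hom(X^{\otimes m},X^{\otimes n})=V_{m+n}$ and $V_n=\Hom_P(\mathbf 1,X^{\otimes n})$ --- and this collapses the bigrading. In the oriented setting there is no canonical way to trade an input for an output, so the round-trip you sketch does not return the original data. (What you have written is closer in spirit to the unoriented variant of Remark~\ref{remark: unoriented}, which is not the theorem being proved.)

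Even after repairing the orientation bookkeeping, your route is substantively different from the paper's. You work with the axiomatic (tensor-categorical) description of wheeled props and aim for a generators-and-relations coherence theorem identifying the operad of wiring diagrams with the free wheeled prop on one object. The paper instead uses the \emph{monadic} definition --- wheeled props as algebras over the graph-substitution monad $\mathsf{F}$ on $\mathsf{Vect}^{\calS}$ --- and constructs an explicit bijection $\Phi,\Psi$ between isomorphism classes of oriented labelled graphs and wiring diagrams (vertices $\leftrightarrow$ input circles), verifying directly that graph substitution corresponds to wiring-diagram composition. The equivalence then follows by translating the $\mathsf{F}$-algebra structure map $\gamma$ into the family $\{F_D\}$ and back, one diagram at a time. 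This sidesteps any coherence or normal-form argument: no presentation of wiring diagrams by elementary generators is needed because there is already a one-to-one dictionary at the level of individual diagrams. Your approach would in the end require the equivalence of the biased and unbiased definitions of wheeled props (the coherence theorem the paper cites from Yau--Johnson), which the paper's argument simply avoids.
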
 
In Section~\ref{sec:WP} we give a formal introduction to linear wheeled props  as algebras over a monad on diagrams of vector spaces indexed by directed graphs. The key step in this identification is understanding that wiring diagrams can be identified with directed graphs (Lemma~\ref{lemma: graphs are wiring diagrams}).  In order to make this paper readable to the largest possible audience, however, we have also included an appendix containing an equivalent, axiomatic, definition of wheeled props (Definition~\ref{def:baised modular operad}). To keep our comparison with the algebraic classification of planar algebras in mind, we point out in Proposition~\ref{prop: wheeled props are pivcat} that linear wheeled props embed into the category of linear pivotal categories. In summary, we have the following diagram in which the two horizontal adjunctions are equivalences of categories:  \[\begin{tikzcd} \mathsf{CA} \arrow[d, hook, shift left = 1]\arrow[r, "\cong"] & \mathsf{wProp} \arrow[l]\arrow[d, hook, shift right=1]\\
\mathsf{PA}\arrow[r, "\cong"] \arrow[u, shift left=1.5] & \mathsf{PivCat} \arrow[l]\arrow[u, shift right =1.5]
\end{tikzcd}.\] We further discuss the conjectural commutativity of this diagram at the end of Section~\ref{section: categorical description}.

Throughout this paper we focus our attention on linear wheeled props, that is, wheeled props enriched in $\Bbbk$-vector spaces. This choice was made to simplify exposition, but all objects can be defined in any closed, symmetric monoidal category and all arguments still hold.  We further note that one goal of this paper is to provide a bridge between the tensor categories, knot theory and category theory communities. As such, the level of detail is intended to make each section accessible to mathematicians working in the other areas.

\medskip 
\noindent\textbf{Acknowledgements.} 
Part of this work was completed while the first and third authors were in residence at MSRI for the program ``Higher Categories and Categorification'' in 2020. In addition, we would like to thank Dror Bar-Natan, Scott Morrison and Sophie Raynor for suggestions of key references and Arun Ram for many helpful comments. 
\newpage
\tableofcontents




\section{Circuit algebras}\label{sec:CA}

A circuit algebra, much like a planar algebra, is a family of vector spaces with operations indexed by \emph{wiring diagrams}: abstract $1$-manifolds with boundary whose boundary points are identified with endpoints along the circles of a ``disc with holes''.  We will develop the comparison with planar algebras further in Section~\ref{sec:Ex}, but for now we alert the reader to the fact that we are using a definition of planar algebra without shading, such as that in \cite[Definition 2.3]{HPT16} or \cite[Section 2]{BHP12}.  The key difference between wiring diagrams and planar tangles is that while planar tangles are inherently topological objects, wiring diagrams are purely combinatorial, and their topological description below is merely for convenience (see Remark~\ref{rmk:Matching}). 

\medskip 

Throughout this paper, let $\calI$ denote a countable alphabet, the set of {\em labels}. The following definition is an expanded version of the definition given in \cite[Section 2]{BND:WKO2}.

\begin{definition}\label{def:WD}
	An \emph{oriented wiring diagram} is a triple $D=(\calA,M,f)$ consisting of:
	\begin{enumerate}
		\item A set $\calA=\{A_0^{\text{out}}, A_0^{\text{in}},A_1^{\text{out}},A_1^{\text{in}},\hdots,A_r^{\text{out}},A_r^{\text{in}}\}$ of  sets of labels, for some non-negative integer $r$. That is, $A_i^{\text{out}}, A_i^{\text{in}} \subseteq \calI$ for each $0\leq i\leq r$. The elements of the sets $A^{\text{out}}_i$ are referred to as \emph{outgoing labels} and the elements of $A^{\text{in}}_i$ are \emph{incoming labels}. The sets $A_0^{\tout}$ and $A_{0}^{\tin}$ play a distinguished role: their elements are called the \emph{output labels} of the diagram, while the sets $A_1^{{\text{out}}}, A_{1}^{\text{in}},...,A_r^{\text{out}},  A_r^{\text{in}}$ contain {\em input labels} of the diagram. We write $A_{i}^{\text{out}/\text{in}}$ to mean  ``$A_{i}^{\text{out}}$ and $A_{i}^{\text{in}}$, respectively''. 
		
		\item An oriented compact $1$-manifold $M$, with boundary $\partial M$, regarded up to orientation-preserving homeomorphism. The connected components of $M$ are homeomorphic to either an oriented circle (with no boundary) or an oriented interval with one beginning and one ending point. We write $\partial M^{\tout}$ for the set of {\em beginning} boundary points of $M$, and  $\partial M^{\text{in}}$ for the set of {\em ending}  boundary points, so $\partial M = \partial M^{\text{out}} \sqcup \partial M^{\tin}$.
		
		\item Bijections\footnote{If the sets $\{A_i^{\text{out}/\text{in}}\}$ are not pairwise disjoint, replace the unions $(\cup_{i=0}^r A_i^{\text{out}})$ and  $(\cup_{i=0}^r A_i^{\text{in}})$ by the set of triples $\{(a,i,\text{out}/\text{in})\, | \, a\in A_i^{\text{out}/\text{in}}, 0\leq i \leq r\}$.} \label{fn:Triples}
		$$\partial M^{\text{out}} \xrightarrow{f} \cup^r_{i=0} {A^{\text{out}}_i} \quad \textrm{and}\quad  \partial M^{\text{in}} \xrightarrow{f} \cup^r_{i=0} {A^{\text{in}}_i}.$$
	\end{enumerate}
\end{definition}

\medskip

Wiring diagrams have a convenient pictorial representation shown in Figure~\ref{fig:WiringDiagramOri}, which illuminates their relationship to planar algebras.  A {\em disc with $r$ holes}, $$D_0 \setminus (\mathring{D}_1 \sqcup \mathring{D}_2 \sqcup  \hdots \sqcup \mathring{D}_r),$$ is obtained by removing $r$ disjoint numbered open discs with disjoint boundaries from the interior of a bigger disc. The boundaries of the removed discs are called the {\em input circles}, while the boundary of the big disc is numbered zero and called the {\em output circle}. 

Assume $\calI$ is ordered, which is often the case with the labels we use in examples, such as natural numbers or Roman letters. Arrange the elements of $A_0^{\text{out}}$, then $A_{0}^{\text{in}}$ in the order induced by the ordering on $\calI$ at uniform intervals along the output circle, and the elements of $A_i^{\text{out}}$, then $A_{i}^{\text{in}}$ for $i=1,..., r$, in order at uniform intervals along the $i$th input circle. Represent the manifold $M$ and the identification $f$ as immersed curves in $D_0 \setminus (\mathring{D}_1 \sqcup \mathring{D}_2 \sqcup  \hdots \sqcup \mathring{D}_r)$. Note that the specific immersion is not part of the data of the wiring diagram.

\begin{figure}[h]
	\includegraphics[height=.4\linewidth]{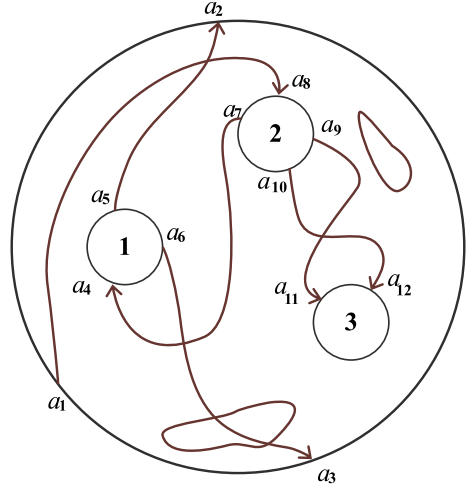}
	\caption{An example of an oriented wiring diagram. The labels sets are $A_0^{\tout}=\{a_1\}$, $A_0^{\tin}=\{a_2,a_3\}$, $A_1^\tout=\{a_5, a_6\}$, $A_1^\tin=\{a_4\}$, $A_2^\tout=\{a_7, a_9, a_{10}\}$, $A_2^\tin=\{a_8\}$, $A_3^\tout=\emptyset$, $A_3^\tin=\{a_{11},a_{12}\}$. The manifold, drawn in brown lines, is a disjoint union of six oriented intervals and two oriented circles. In the picture we don't draw arrows for circles, since they are abstract, not embedded: there is only one homeomorphism type of an oriented circle.}\label{fig:WiringDiagramOri}
\end{figure}

\medskip 

\begin{definition}\label{def:WiringComp} Given two wiring diagrams
	\[\begin{array}{c} D=\left(\mathcal{A}=\{A_0^{\text{out}/\text{in}},A_1^{\text{out}/\text{in}},\hdots, A_r^{\text{out}/\text{in}}\},M,f\right) \\ D'=\left(\mathcal{B}=\{B_0^{\text{out}/\text{in}},B_1^{\text{out}/\text{in}},\hdots,B_s^{\text{out}/\text{in}}\},N,g\right)\end{array}\] 
	the {\em composition} $D \circ_i D'$, for $0 \leq i \leq r$, is defined whenever $A_i^{\text{out}/\text{in}}={B_0}^{\text{in}/\text{out}}$ as sets.\footnote{It is important that $A_i^\tin$ is identified with $B_0^\tout$ and vice versa.}  The resulting \emph{composite wiring diagram} $D \circ_i D' =(\mathcal{A}\circ_i\mathcal{B},M \sqcup_{\varphi} N, f \sqcup_{\varphi} g)$ consists of:
	
	\begin{enumerate} 
		\item the label sets $$\mathcal{A}\circ_i\mathcal{B} =\{A_0^{\text{out}/\text{in}},A_1^{\text{out}/\text{in}},\hdots, A_{i-1}^{\text{out}/\text{in}}, B_1^{\text{out}/\text{in}},\hdots, B_s^{\text{out}/\text{in}}, A^{\text{out}/\text{in}}_{i+1}, \hdots, A^{\text{out}/\text{in}}_r\}$$
		
		\item a compact oriented $1$-manifold $M \sqcup_{\varphi} N$, obtained by gluing $M$ and $N$ along the map $\varphi$, which identifies  the boundary points in $f^{-1}(A_i^{\text{out}/\text{in}})$ and $g^{-1}(B_0^{\text{in}/\text{out}})$: 	
		\[\begin{tikzcd}\partial M \supseteq f^{-1}(A_i^{\text{out}/\text{in}})\arrow[rr, bend left =15, "\varphi"] \arrow[r, "f"] & A_i^{\text{out}/\text{in}} = B_0^{\text{in}/\text{out}} \arrow[r, "g^{-1}"] & g^{-1}(B_0^{\text{in}/\text{out}}) \subseteq \partial N; \end{tikzcd}\] 

		\item a bijection $f \sqcup_{\varphi} g$ defined to be $f$ on $\partial M \setminus  f^{-1}(A_i^{\text{out}/\text{in}})$ and $g$ on $\partial N \setminus  g^{-1}(B_0^{\text{out}/\text{in}})$. 
		
	\end{enumerate} 
\end{definition}


\medskip 

Composition can be pictorially represented by shrinking the wiring diagram $D'$ and gluing it into the $i$th input circle of $D$ so that the labels match. Then delete the outer circle of $D'$, as shown in Figure~\ref{fig:WDComp}. The ordering of the input discs of the composite diagram follows the ordering prescribed in $(1)$ of Definition~\ref{def:WiringComp}. Note that composition may create closed components (circles) in $M\sqcup_{\varphi} N$.

\begin{figure}[h]
	\includegraphics[width=13cm]{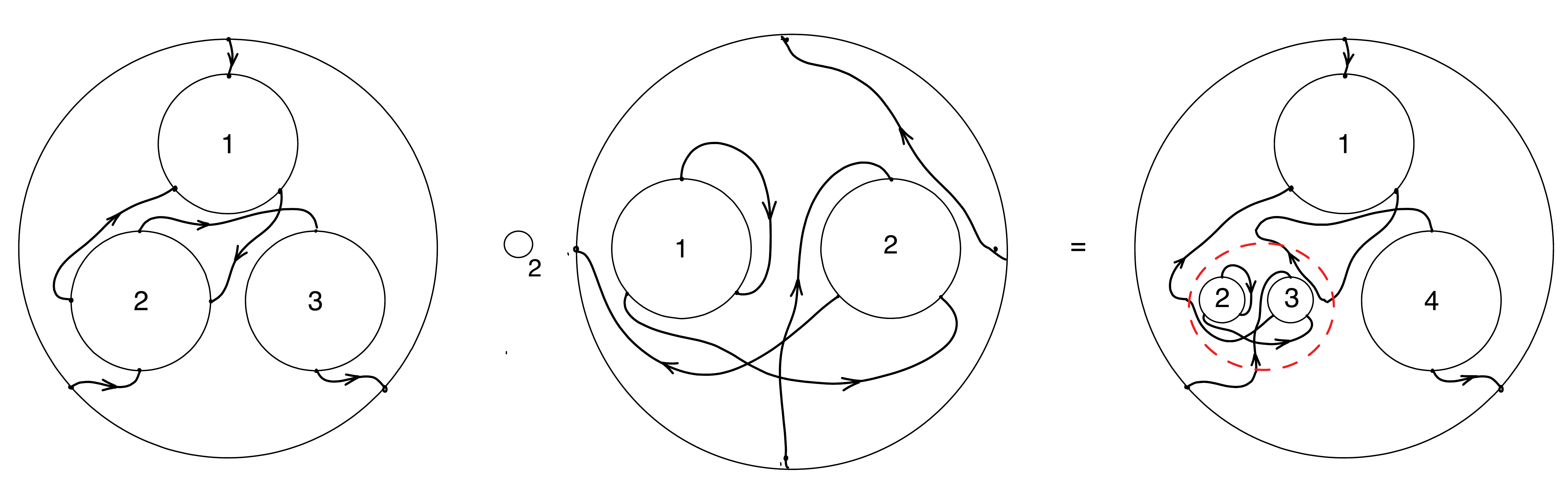}
	\caption{An illustration of oriented wiring diagram composition; labels are suppressed for simplicity, but must match. The deleted outer disc of $D'$ is shown as a broken line red circle.}\label{fig:WDComp}
\end{figure}

\begin{prop}\label{rmk:Matching} A wiring diagram $(\calA, M, f)$ is equivalent to a triple $(\calA, p, l),$ where $\calA$ is as above, $p$ is a perfect matching (set bijection) between\footnote{If the sets $A_i^{\tout/\tin}$ are not disjoint, replace them in the disjoint unions with sets of triples, as before.}
	$\bigsqcup_{i=0}^r 
	A^{\text{out}}_i$ and $\bigsqcup_{i=0}^r A^{\text{in}}_i$, and $l\in \Z_{\geq 0}$ is a non-negative integer, 
	``the number of circles in $M$''.
\end{prop}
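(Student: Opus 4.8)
The plan is to exhibit mutually inverse assignments between wiring diagrams $(\calA, M, f)$ (taken up to orientation-preserving homeomorphism) and triples $(\calA, p, l)$, holding the label data $\calA$ fixed throughout. The central observation is that the combinatorial content of $(M,f)$ is captured entirely by the way the interval components pair outgoing with incoming labels, together with a count of the circle components, which carry no boundary data.

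First I would construct the forward map. Given $(\calA, M, f)$, recall that every connected component of the oriented $1$-manifold $M$ is either a circle or an oriented interval, and each interval has exactly one beginning point in $\partial M^{\text{out}}$ and one ending point in $\partial M^{\text{in}}$. For an outgoing label $a \in \bigsqcup_{i=0}^r A_i^{\text{out}}$, the bijection $f$ identifies it with a beginning point $f^{-1}(a) \in \partial M^{\text{out}}$; this point lies on a unique interval component, whose ending point $f$ sends to some incoming label. Setting $p(a)$ equal to that incoming label defines a map $p \colon \bigsqcup_i A_i^{\text{out}} \to \bigsqcup_i A_i^{\text{in}}$, and I would let $l$ be the number of circle components of $M$. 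To see that $p$ is a perfect matching, note that sending a beginning point to the ending point of its interval component is a bijection $\partial M^{\text{out}} \to \partial M^{\text{in}}$, since the intervals partition the boundary and each carries exactly one point of each type; pre-composing with $f|_{\partial M^{\text{out}}}^{-1}$ and post-composing with $f|_{\partial M^{\text{in}}}$ then exhibits $p$ as a composite of bijections.

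Next I would construct the reverse map. Given $(\calA, p, l)$, build $M$ as the disjoint union of one oriented interval for each matched pair $(a, p(a))$ together with $l$ oriented circles, and define $f$ by sending the beginning point of the interval for $(a,p(a))$ to $a$ and its ending point to $p(a)$. Since $p$ is a bijection, $f$ restricts to bijections $\partial M^{\text{out}} \to \bigsqcup_i A_i^{\text{out}}$ and $\partial M^{\text{in}} \to \bigsqcup_i A_i^{\text{in}}$, so $(\calA, M, f)$ is a genuine wiring diagram.

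The step I expect to require the most care is verifying that these assignments are mutually inverse at the level of \emph{equivalence classes} — that is, that $(p,l)$ is a complete invariant of $(M,f)$ up to orientation-preserving homeomorphism respecting $f$. This rests on the classification of compact oriented $1$-manifolds: such a manifold is determined up to orientation-preserving homeomorphism by its number of circle components and its number of interval components. Once the boundary labeling $f$ is imposed, any homeomorphism between two wiring diagrams sharing the same $\calA$ must match beginning points to beginning points and ending points to ending points compatibly with $f$, hence must carry an interval realizing a pair $(a,b)$ to an interval realizing the same pair. Thus two wiring diagrams are equivalent precisely when they induce the same matching $p$ and have the same number $l$ of circles. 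Starting from $(\calA, M, f)$, passing to $(p,l)$ and rebuilding returns a manifold with the same interval-induced matching and circle count, hence an equivalent wiring diagram; starting from $(\calA, p, l)$, the round trip recovers $p$ and $l$ exactly. This establishes the claimed equivalence.
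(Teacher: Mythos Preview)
Your proposal is correct and follows the same approach as the paper: extract the perfect matching from the interval components via the endpoint bijection $f$, record the number of circle components, and invoke the classification of compact oriented $1$-manifolds to see that this is a complete invariant up to orientation-preserving homeomorphism. The paper's proof is a terse one-paragraph sketch of precisely this argument; you have simply spelled out the forward and reverse maps and the mutual-inverse verification in more detail than the paper does.
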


\begin{proof} The oriented $1$-manifold $M$ in Definition~\ref{def:WD} is a disjoint union of a finite number of oriented intervals and 
	circles.  The endpoints of the intervals are identified with the labels, and as such, the only role of the intervals is 
	to define a perfect matching -- that is, a set bijection -- between the sets of incoming and outgoing labels. Since there is only one homeomorphism type of an oriented circle, one can equivalently simply remember the number of circle components of $M$.
\end{proof}

This alternative definition illuminates that wiring diagrams are combinatorial -- as opposed to topological -- objects. This is the main difference between wiring diagrams and planar tangles. We make this combinatorial description even more explicit by identifying wiring diagrams with a combinatorial formalism of directed graphs in Lemma~\ref{lemma: graphs are wiring diagrams}. Our main reason for presenting the definition using $1$-manifolds is the composition of wiring diagrams: describing the perfect matching and number of circles resulting from a composition in purely combinatorial terms is possible, but a headache (we encourage the reader to try). The definition of composition through gluing 1-manifolds is much more elegant and concise.

\medskip

As with planar tangles, one can show that the collection of oriented wiring diagrams 
together with the $\circ_i$ compositions assembles into a (coloured) {\em operad}. A proof of this is a simple exercise along the same lines as the description of the operad of planar tangles in \cite{Jones:PA} or the construction of an operad of wires in \cite[Section 3]{polyak2010alexanderconway}. A concise definition for an oriented circuit algebra is an algebra over this operad; we explain the notion of operads and algebras over them in more detail in Section~\ref{sec:Ex}. Here we unwind this concept and arrive at the following definition; an expanded version of that in \cite[Definition 2.10]{BND:WKO2}: 

\begin{definition}\label{def:CA}
	An \textbf{oriented circuit algebra} $\V$ consists of a collection of vector spaces indexed by pairs of label sets, $\{\V[S^{\tout};S^{\tin}]\}_{S^{\text{out}}, S^{\text{in}}\subseteq \calI}$ , together with a family of linear maps between these, parametrised by oriented wiring diagrams. Namely, for each wiring diagram $D=(\calA,M,f)$, there is a corresponding linear map 
	$$F_D:\V[A^{\tout}_1;A^{\tin}_1] \otimes \hdots \otimes \V[A^{\tout}_r;A^{\tin}_{r}] \rightarrow \V[A^{\tin}_0;A^{\tout}_0].$$ This data must satisfy the following axioms: 
	
	\begin{enumerate}
		\item The composition of wiring diagrams corresponds to composition of linear maps in the following sense.  Let \[D=(\{A^{\tout/\tin}_0,\hdots,A^{\tout/\tin}_r\},M,f), \quad D'=(\{B^{\tout/\tin}_0,\hdots,B^{\tout/\tin}_s\},N,g)\] be two wiring diagrams composable as $D\circ_i D'$. Then the map of vector spaces corresponding to the composition $D\circ_i D'$ is 
		\[ F_{D\circ_{i}D'}= F_D\circ (\Id\otimes \dots \otimes \Id \otimes F_{D'}\otimes \Id \otimes \dots\otimes \Id), \]
		where $F_{D'}$ is inserted in the $i$th tensor component.
		\item There is an action of the symmetric groups $\mathcal S_r$ on wiring diagrams with $r$ input sets, which permutes (re-numbers) the input sets. 
		The assigment of linear maps to wiring diagrams is equivariant under this action in the following sense. Let $D=(\{A^{\tout/\tin}_0, A^{\tout/\tin}_1,\hdots,A^{\tout/\tin}_{r}\},M,f)$ be a wiring diagram, $\sigma\in \mathcal S_r$, and let
		$\sigma D= (\{A^{\tout/\tin}_0, A^{\tout/\tin}_{\sigma(1)},\hdots,A^{\tout/\tin}_{\sigma(k)}\},M, f)$ be the wiring diagram $D$ with the input sets re-ordered; note that the output set $A^{\tout/\tin}_0$ is fixed.
		Then the induced linear map
		$F_{\sigma \! D}$ is $F_D\circ \sigma^{-1}$, where $\sigma^{-1}$ acts on  
		$\V[A^{\tout/\tin}_{\sigma(1)}] \otimes \hdots \otimes \V[A^{\tout/\tin}_{\sigma(r)}]$ by permuting the tensor factors.  
	\end{enumerate}
\end{definition}

\begin{definition}	A \emph{morphism} of circuit algebras $\Phi:\mathsf{V}\rightarrow\mathsf{W}$ is a family of linear maps 
	$\{\Phi_{S^{\tout};S^\tin}:\mathsf{V}[S^\tout;S^\tin]\rightarrow \mathsf{W}[S^\tout;S^\tin]\}_{S^{\text{out}}, S^{\text{in}} \subseteq \calI}$
	which commutes with the action of wiring diagrams. That is, for any wiring diagram $D=(\calA, M, f)$ we have a commutative diagram:
	\[\xymatrixcolsep{5pc}\xymatrix{
		\V[A^{\tout}_1;A^{\tin}_1] \otimes \hdots \otimes \V[A^{\tout}_r;A^{\tin}_{r}] \ar[r]^-{(F_\V)_D} \ar[d]_{\Phi_{A_1^\tout;A_1^\tin}\otimes ... \otimes \Phi_{A_r^\tout;A_r^\tin} }& \V[A^{\tin}_0;A^{\tout}_0] \ar[d]^{\Phi_{A_0^\tin;A_0^\tout}} \\
		\W[A^{\tout}_1;A^{\tin}_1] \otimes \hdots \otimes \W[A^{\tout}_r;A^{\tin}_{r}] \ar[r]^-{(F_\W)_D} & \W[A^{\tin}_0;A^{\tout}_0]
	}
	\]
	More concisely put, a morphism of circuit algebras is a map of algebras over the operad of wiring diagrams. The category of all circuit algebras is denoted $\mathsf{CA}$. 
\end{definition}

\begin{example}\label{ex:IdAndPerm}
	For every pair of sets of labels $S,T \subseteq \calI$, there are left and right {\em identity wiring diagrams}. In the {\em perfect matching} notation introduced in Remark~\ref{rmk:Matching}:

	The left identity is  $\Id^L_{S,T}=(\{S,T,T,S\}, \Id, 0)$, where: 
	\begin{itemize}
		\item $A^{\tout}_0=S$, $A^{\tin}_0=T$, $A^{\tout}_1=T$, $A^{\tin}_1=S$;  
		\item and $\Id$ refers the perfect matching induced by the set identities $\Id_S$ and $\Id_T$.
	\end{itemize}
	The right identity is $\Id^R_{S,T}=\Id^L_{T,S}$.
	Then, for any wiring diagram $D$ with $A^{\tout}_i=S$, $A_i^{\tin}=T$, we have $D\circ_i \Id^R_{S,T} = D$. On the other hand if $D$ is such that $A^{\tout}_0=S$, $A^{\tin}_0=T$ then $\Id^L_{S,T} \circ_1 D=D$. Consequently, the corresponding linear maps are the identity maps $F_{\Id_{S,T}^R}=\Id_{V[S;T]}$ and $F_{\Id_{S,T}^L}=\Id_{V[T;S]}$.  See Figure~\ref{IDandReLabel}. 
	
\end{example}
\begin{example}\label{example:relabellingWD}
	In a similar vein to the left and right identities, given any pair of permutations $\sigma \in \calS_{S}$ and $\tau \in \calS_{T}$, there are {\em label permuting wiring diagrams} $D_{\sigma,\tau}=({S,T,T,S}, (\sigma,\tau), 0)$ where:
	\begin{itemize}
		\item $A^{\tout}_0=S$, $A^{\tin}_0=T$, $A^{\tout}_1=T$, $A^{\tin}_1=S$ and  
		\item the perfect matching $(\sigma,\tau)$ is given by the permutations $\sigma: S=A_1^\tin \to A_0^\tout =S$ and $\tau: T= A_1^\tout \to A_0^\tin =T$.
	\end{itemize}
	If $B$ is a wiring diagram with $A_0^\tin=T$, $A_0^\tout=S$ then $D_{\sigma,\tau}\circ_1 B$ is the wiring diagram $B$ with the labels in $A_0^\tin$ and $A_0^\tout$ permuted by $\sigma$ and $\tau$ respectively. Similarly, for a wiring diagram $C$ with $A_i^\tin=S$, $A_i^\tout=T$,  the composition $C\circ_i D_{\sigma,\tau}$ is the wiring diagram $C$ with the permutations $\sigma^{-1}$ and $\tau^{-1}$ applied to the labels in $A_i^\tout$ and $A_i^\tin$, respectively. The linear maps $F_{D_{\sigma,\id}}:\V[S;T]\to\V[S;T]$ give a left $\calS_S$ action on $\V[S;T]$, and $F_{D_{\id, \tau^{-1}}}$ gives a commuting right action by $\calS_{T}$. See Figure~\ref{IDandReLabel}. 
	
	{\em Relabelling} wiring diagrams can be constructed the same way given a pair of set bijections for subsets of $\calI$.
\end{example} 

\begin{figure}[h]
	\centering
	\begin{subfigure}{.5\textwidth}
		\centering
		\includegraphics[height=4.5cm]{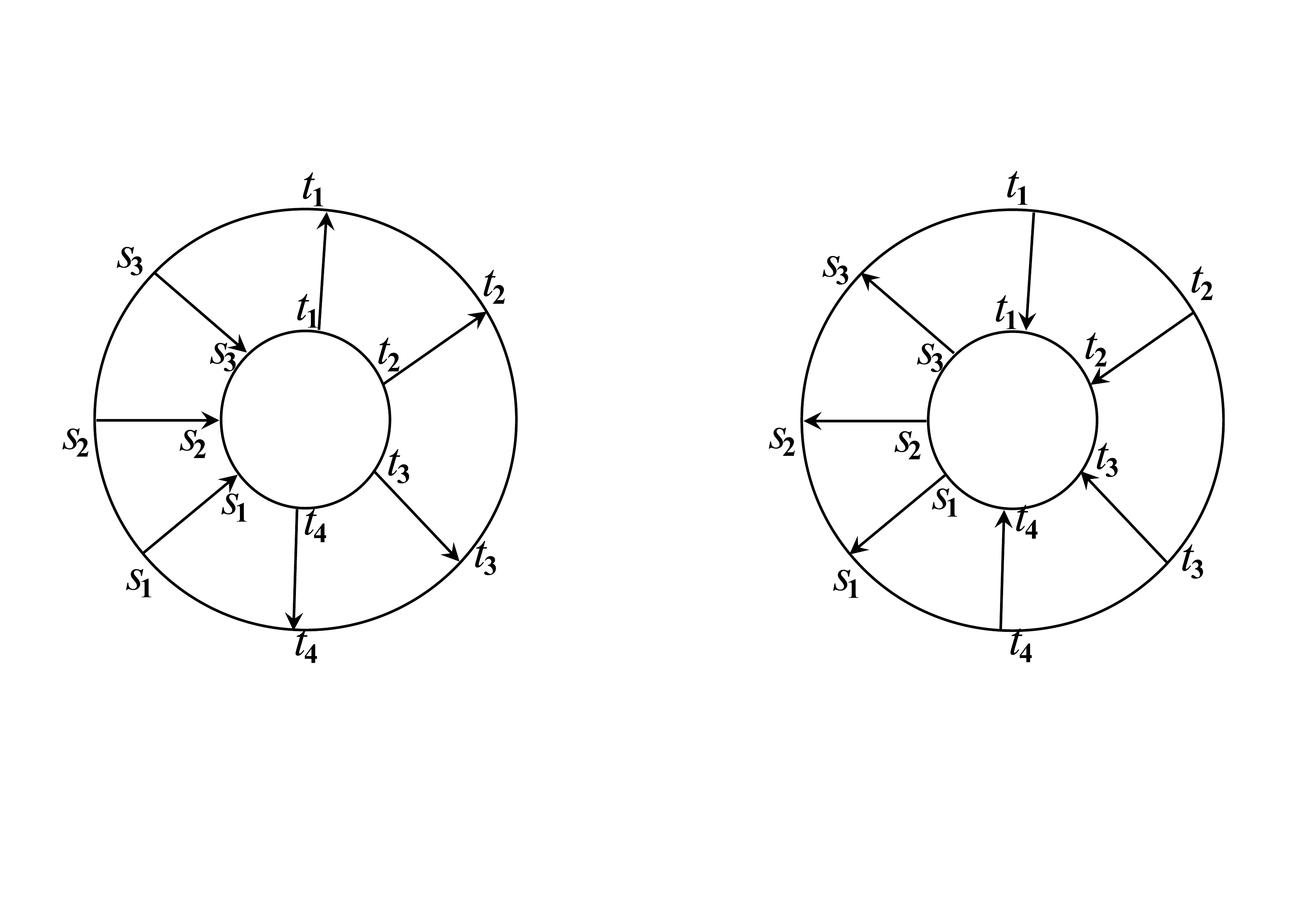}
	\end{subfigure}%
	\begin{subfigure}{.5\textwidth}
		\centering
		\includegraphics[height=4.5cm]{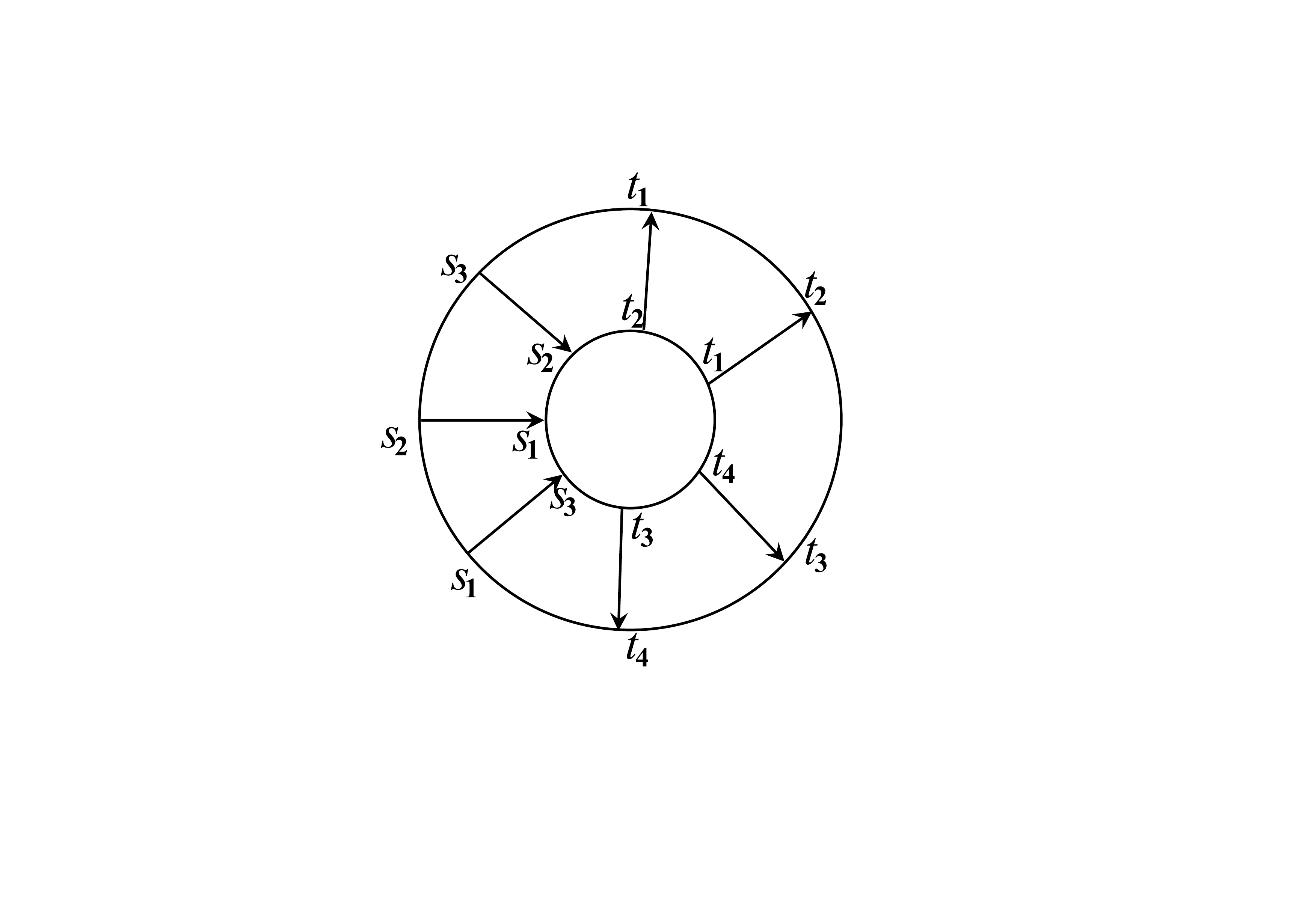}
	\end{subfigure}
	\caption{Examples of identity and relabelling wiring diagrams, respectively.}\label{IDandReLabel}
\end{figure}

\begin{remark}\label{remark: unoriented}
	In this paper we have focused on {\em oriented circuit algebras}, as they are most useful in the topological examples and applications that the authors have in mind (see Section~\ref{sec:Ex}). However, both planar algebras and circuit algebras admit many variations including oriented and un-oriented versions, coloured versions and various enrichments. 
	
	For example, while Definition~\ref{def:CA} of a circuit algebra describes $\V$ as a sequence of vector spaces and linear maps, the definition makes sense in any closed, symmetric monoidal category.  For the unoriented and coloured versions, one only need modify the definition of wiring diagrams as appropriate. Specifically, to define unoriented wiring diagrams simply drop the notion of inputs and outputs so that in a wiring diagram $D=(\calA,M,f)$ the labels sets are $\calA=\{A_0,A_1,\hdots,A_r\}$ with no $\tin/\tout$ distinction. 
\end{remark}


\section{Planar algebras and virtual tangles}\label{sec:Ex}

In this section we make two short detours: one to clarify the relationship between circuit algebras and planar algebras (Subsection~\ref{subsec:CAPA}), and another to present the example of virtual tangles (Subsection~\ref{subsec:VT}), where circuit algebras provide a useful and simple algebraic framework. We contrast the circuit algebra approach to virtual tangles with a planar algebra approach, which also illustrates the point of Section~\ref{subsec:CAPA}. Section~\ref{subsec:CAPA} is likely most interesting to readers already familiar with planar algebras; nothing in the latter part of the paper depends on it. Section~\ref{subsec:VT} is perhaps most interesting to topologists, though the authors believe it is quite self-contained.

\subsection{Circuit algebras and planar algebras}\label{subsec:CAPA}
The main statement of this section is that every circuit algebra is, in particular, a planar algebra, since planar tangles in discs with holes may be viewed as wiring diagrams. 
In more technical terms, there exists a pair of adjoint functors between the category of circuit algebras $\mathsf{CA}$ and the category of planar algebras $\mathsf{PA}$ \[\begin{tikzcd}\mathsf{CA} \arrow[r, hook, shift left=.5ex]& \arrow[l, shift left=.5ex]\mathsf{PA}.\end{tikzcd}\] 

To make the adjunction precise, we give a brief definition of an {\em oriented planar algebra}.
There are many variations of planar algebras in the literature, for simplicity, we work with oriented planar algebras, which are algebras over the {\em coloured operad} of oriented planar tangles (without shading), in the vein of Definition 2.3 of \cite{HPT16}.

Recall that, given a set of colours $\mathfrak{C}$, a \emph{$\mathfrak{C}$-coloured operad} $\mathsf{P}=\{\mathsf{P}(c_1,\ldots,c_r;c_0)\}$ consists of a collection of vector spaces $\mathsf{P}(c_1,\ldots,c_r;c_0)$, one for each sequence $(c_1,\ldots,c_r;c_0)$ of colours in $\mathfrak{C}$, which is equipped with an $\calS_r$-action permuting $c_1,\ldots,c_r$, together with an equivariant, associative and unital family of partial compositions
\[\begin{tikzcd}\circ_i:\mathsf{P}(c_1,\ldots,c_r;c_0)\times\mathsf{P}(d_1,\ldots,d_s;d_0)\arrow[r]& \mathsf{P}(c_1,\ldots,c_{i-1}, d_1,\ldots, d_s, c_{i+1},\ldots, c_r;c_0) \end{tikzcd},\] whenever $d_0=c_i$. For full details see \cite[Definition 1.1]{bm_resolutions}.

The \emph{operad of oriented planar tangles} $\mathsf{PT} =\{\mathsf{PT}(s_1,\ldots,s_r; s_0)\}$ is a coloured operad where $\mathsf{PT}(s_1,\ldots,s_r; s_0)$ is the vector space spanned by isotopy classes of {\em oriented planar tangles} of {\em type} $(s_1,\ldots,s_r; s_0)$. Here $s_i$ refers to a finite sequence of signs $\pm 1$, and a planar tangle of type $(s_1,\ldots,s_r; s_0)$ lives in a disc with $r$ holes $D_0 \setminus (\mathring{D}_1 \sqcup \mathring{D}_2 \sqcup  \hdots \sqcup \mathring{D}_r),$ where along each boundary circle, there is an equidistant sequence of marked points, labeled with the sign sequence $s_i$. The planar tangle is an oriented $1$-manifold $M$ embedded in such a disc with holes, such that the embedding maps $\partial M$ bijectively to the set of marked points with incoming boundary points mapping to positive marked points, and outgoing boundary points to negative marked points; cf.\ \cite[Definition  2.1]{HPT16}. 

Planar tangles are composed by the shrinking-and-gluing procedure described in the previous sections, where the gluing of the 1-manifolds must be orientation-respecting, meaning the sign sequences must match. This is a partial operadic composition which makes the set of oriented planar tangles a coloured operad.

An \emph{algebra} over an $\mathfrak{C}$-coloured operad $\mathsf{P}$ is a $\mathfrak{C}$-indexed family of vector spaces $\mathsf{A}=\{\mathsf{A}(c)\}_{c\in\mathfrak{C}}$ together with an action by $\mathsf{P}$ (\cite[Definition 1.2]{bm_resolutions}). 
An {\em oriented planar algebra} is an algebra over the operad of oriented planar tangles: a collection of vector spaces which carry actions of planar tangles, which are compatible with compositions and the symmetric group action, just like the definition of a circuit algebra based on the notion of wiring diagrams. A map of oriented planar algebras is a morphism of algebras over the operad of planar tangles -- this can be unpacked just like we did for circuit algebras. Denote the category of planar algebras by $\mathsf{PA}$.

As above, we have that $\mathsf{PT}(s_1,\ldots,s_r;s_0)$ denotes the space of oriented planar tangles of type $(s_1,\ldots,s_r;s_0)$. Let $\mathsf{WD}(s_1,\ldots,s_r;s_0)$ denote the vector space of oriented wiring diagrams where, if $s_i=(s_{i,1},...,s_{i,d_i})$, then $A_i^\tin =  \{j: s_{i,j}=1\}$ and $A_i^\tout =  \{j: s_{i,j}=-1\}$.

\begin{lemma}\label{lem:PTWD}
	For every signed sequence of integers $(s_1,\ldots, s_r;s_0)$, the space of oriented planar tangles $\mathsf{PT}(s_1,\ldots,s_r;s_0)$ is a subspace of $\mathsf{WD}(s_1,\ldots,s_r;s_0)$. 
\end{lemma}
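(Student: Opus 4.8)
The plan is to exhibit $\mathsf{PT}(s_1,\ldots,s_r;s_0)$ as a subspace of $\mathsf{WD}(s_1,\ldots,s_r;s_0)$ by producing a natural forgetful map on generators and checking that it is injective. Both vector spaces are free on their generating sets — isotopy classes of oriented planar tangles of the given type on one side, oriented wiring diagrams with the label sets $A_i^{\tin}=\{j:s_{i,j}=1\}$, $A_i^{\tout}=\{j:s_{i,j}=-1\}$ on the other — so it suffices to define an injective map between these generating sets and extend linearly; injectivity of the linear extension is then automatic.

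First I would construct the map. An oriented planar tangle $T$ of type $(s_1,\ldots,s_r;s_0)$ is an oriented $1$-manifold embedded in a disc with $r$ holes whose boundary is carried bijectively onto the marked points, with incoming boundary points going to positive marked points and outgoing boundary points to negative ones. Forgetting the embedding leaves an abstract oriented compact $1$-manifold $M$ together with the induced identification $f$ of $\partial M$ with the marked points; labelling the marked points on the $i$th circle by their positions and taking $A_i^{\tin/\tout}$ exactly as above, the triple $(\calA,M,f)$ is precisely an oriented wiring diagram in the sense of Definition~\ref{def:WD}, and the $\tin/\tout$ conventions match by construction. I would then check this assignment descends to isotopy classes: an isotopy of planar tangles alters neither the homeomorphism type of $M$, nor the endpoint pairing it induces, nor the number of closed components, so by Proposition~\ref{rmk:Matching} it leaves the underlying wiring diagram $(\calA,p,l)$ unchanged. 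Extending linearly yields a map $\mathsf{PT}(s_1,\ldots,s_r;s_0)\to\mathsf{WD}(s_1,\ldots,s_r;s_0)$.

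The hard part will be injectivity on generators, which amounts to the statement that the combinatorial data of the associated wiring diagram — the perfect matching together with the number of circles, in the language of Proposition~\ref{rmk:Matching} — is a \emph{complete} isotopy invariant of a planar tangle. This is where planarity must be used essentially: a crossing-free embedded $1$-manifold in a disc with holes should be recoverable up to isotopy from its non-crossing endpoint pairing and its loop count, the main point being to confirm that the embedding retains no further moduli once this data is fixed (in particular that the apparent freedom of a strand to wind around an input hole is killed by the isotopies available in the planar setting). Granting this, distinct isotopy classes of planar tangles have distinct underlying wiring diagrams, so the map is injective; its image is the subspace spanned by those wiring diagrams whose matching is realisable without crossings, which is in general proper since arbitrary matchings need not be planar, so that $\mathsf{PT}$ sits inside $\mathsf{WD}$ as a genuine subspace.
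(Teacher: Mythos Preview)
Your construction of the forgetful map---keep the abstract oriented $1$-manifold and its boundary identification, discard the interior embedding---is exactly the paper's proof, which is a single sentence and does not address injectivity at all.

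You correctly identify injectivity as the nontrivial point, but the confirmation you anticipate fails: the claim that ``the apparent freedom of a strand to wind around an input hole is killed by the isotopies available in the planar setting'' is false for $r\geq 1$. Take $s_1$ empty and $s_0=(+,-)$: a single arc joining the two outer marked points can leave the inner disc $D_1$ in either of its two complementary regions, and an orientation-preserving self-homeomorphism of $D_0$ fixing $\partial D_0$ pointwise necessarily carries each complementary region of the arc to the corresponding region of the image arc, so it cannot move $D_1$ from one side to the other. These two planar tangles are therefore non-isotopic but have identical underlying wiring diagrams. The same obstruction persists for nonempty $s_1$ whenever the inner disc together with all strands touching it sits inside a single complementary region of the remaining strands. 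Hence the matching-plus-loop-count is \emph{not} a complete isotopy invariant, and the forgetful map is not injective on generators.

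What your first two paragraphs (and the paper's one line) do establish is a well-defined linear map, and in fact a map of operads $\mathsf{PT}\to\mathsf{WD}$. That is all that is actually used downstream in Proposition~\ref{prop: adjunction PA and CA}; the literal ``subspace'' assertion is stronger than what either argument proves.
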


\begin{proof}
	Given a planar tangle of type $(s_1,\ldots, s_r;s_0)$, forget the embedding information on the interior of $M$ to obtain a wiring diagram, where only $\partial M$ is identified with the label sets.
\end{proof}

Now assume that $\mathsf{P}$ and $\mathsf{Q}$ are $\mathfrak C$-coloured operads with inclusions $\mathsf{Q}(c_1,\ldots,c_r;c_0) \subseteq \mathsf{P}(c_1,\ldots,c_r;c_0)$ for each $(c_1,\ldots,c_r;c_0)$. Recall that such data defines a $\mathfrak C$-coloured \emph{sub-operad} of $\mathsf{P}$, if the restriction of the symmetric group actions and $\circ_i$ partial compositions of $\mathsf{P}$ agrees with the operad structure of $\mathsf{Q}$.

\begin{prop} \label{prop:suboperad}
	The operad of oriented planar tangles is a sub-operad of the operad of wiring diagrams. 
\end{prop}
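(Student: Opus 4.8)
The plan is to verify that the inclusions $\mathsf{PT}(s_1,\ldots,s_r;s_0)\subseteq\mathsf{WD}(s_1,\ldots,s_r;s_0)$ from Lemma~\ref{lem:PTWD} are compatible with the operad structure, in the precise sense recalled just above the statement: I must check that the symmetric group actions and the partial compositions $\circ_i$ of the operad of wiring diagrams, when restricted to the image of $\mathsf{PT}$, agree with the intrinsic operad structure on oriented planar tangles. Since Lemma~\ref{lem:PTWD} already supplies the level-wise inclusions, there is nothing more to construct; the content is entirely in checking closure and compatibility.

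First I would observe that the inclusion map of Lemma~\ref{lem:PTWD} simply forgets the embedding of the interior of the $1$-manifold $M$, retaining the identification of $\partial M$ with the label sets together with the orientation data recorded in the sign sequences. Both operads are built from the \emph{same} underlying oriented $1$-manifolds, and both use the \emph{same} shrinking-and-gluing procedure for composition (this is noted explicitly in the text: planar tangles are composed ``by the shrinking-and-gluing procedure described in the previous sections''). Consequently, for a planar tangle $P\in\mathsf{PT}(s_1,\ldots,s_r;s_0)$ and $P'\in\mathsf{PT}(t_1,\ldots,t_s;t_0)$ with $t_0=s_i$, the composite $P\circ_i P'$ computed in $\mathsf{WD}$ is obtained by gluing exactly the same $1$-manifolds $M\sqcup_\varphi N$ and then forgetting the interior embedding; this agrees with first gluing inside the disc-with-holes and then forgetting, i.e.\ with the image of the planar-tangle composite. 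The only subtlety worth flagging is that composition of wiring diagrams may create free circle components, but this already happens for planar tangles (two arcs can glue into a closed loop in the disc), so the counts match and no new phenomenon appears.

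Next I would check equivariance: the $\calS_r$-action on $\mathsf{WD}$ re-numbers the input label sets (Definition~\ref{def:CA}(2)), and the $\calS_r$-action on $\mathsf{PT}$ re-numbers the input circles of the disc-with-holes. Under the forgetful inclusion, re-numbering input circles and then forgetting the embedding yields precisely the re-numbering of input label sets, so the inclusions are $\calS_r$-equivariant. Finally, the identity wiring diagrams (Example~\ref{ex:IdAndPerm}) are images of the identity planar tangles — straight oriented strands in an annulus — so units are preserved. Assembling these observations, the level-wise inclusions assemble into a morphism of coloured operads, which by definition exhibits $\mathsf{PT}$ as a sub-operad of $\mathsf{WD}$.

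**The main obstacle** is purely one of bookkeeping rather than of mathematical depth: one must confirm that the embedded composition of planar tangles, performed \emph{inside} the disc-with-holes and then stripped of its embedding, produces the \emph{same} abstract glued $1$-manifold (same perfect matching on boundary labels and same number of closed components, in the language of Proposition~\ref{rmk:Matching}) as the composition performed directly on wiring diagrams. Because both compositions glue along the identical boundary identification $\varphi$ matching the sign sequences, the resulting matchings and circle counts coincide; I would make this explicit by noting that the forgetful map intertwines the two gluings tautologically. No planarity is lost or gained in this step — planarity is a property of the \emph{omitted} embedding data, so it plays no role in the combinatorial composite, which is exactly why the inclusion is operadic.
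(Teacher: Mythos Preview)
Your proposal is correct and follows essentially the same approach as the paper's own proof: verify that the $\calS_r$-action on $\mathsf{WD}$ restricts to $\mathsf{PT}$, and that the $\circ_i$ partial compositions of wiring diagrams coincide with those of planar tangles when restricted to the planar subspace. The paper's proof is considerably terser (two sentences), but your additional bookkeeping about the forgetful map intertwining the gluings, and your remark on units, are entirely in the spirit of what the paper leaves implicit.
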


\begin{proof} 
	The symmetric group $\calS_r$ acts on $\mathsf{WD}$ by permuting the indices of the input sets (internal discs), and therefore restricts to $\mathsf{PT}(s_1,\ldots,s_r;s_0)\subseteq \mathsf{WD}(s_1,\ldots,s_r;s_0)$. The $\circ_i$ partial composition of planar tangles, as described in Definition 2.1~\cite{HPT16}, is precisely the $\circ_i$ partial composition of wiring diagrams when restricted to the subspace of planar tangles. 
\end{proof}

Let $\mathsf{Q}$ be a sub-operad of $\mathsf{P}$, and let $\text{Alg}(\mathsf{Q})$ and $\text{Alg}(\mathsf{P})$ denote the categories of algebras over $Q$ and $P$, respectively. Then the inclusion $\phi:\mathsf{Q}\rightarrow \mathsf{P}$ induces an adjunction \[\begin{tikzcd}\text{Alg}(\mathsf{Q})\arrow[r, shift left=.5ex, "\phi_!"] & \arrow[l,shift left=.5ex,"\phi^*"] \text{Alg}(\mathsf{P}). \end{tikzcd}\] For the further details, see just below Definition 1.2 in \cite{bm_resolutions}. The functor $\phi^*$ is quite intuitive to construct, as follows. An algebra over $P$ is a structure that carries an action by the elements of $P$; an algebra over $Q$ is a structure that carries an action by elements of $Q$; both of which are compatible with compositions, and symmetric group actions. Since every element of $Q$ is in particular an element of $P$, an algebra over $P$ is automatically an algebra over $Q$. The left adjoint exists for formal reasons.

\begin{prop} \label{prop: adjunction PA and CA}
	There is an adjunction \[\begin{tikzcd}\mathsf{PA}\arrow[r, shift left=.5ex, "\phi_!"] & \arrow[l,shift left=.5ex,"\phi^*"] \mathsf{CA} \end{tikzcd}\] between the category of oriented planar algebras and the category of oriented circuit algebras. 
\end{prop}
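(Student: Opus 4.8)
The plan is to deduce the adjunction directly from the sub-operad inclusion established in Proposition~\ref{prop:suboperad}, invoking the general principle -- recorded just after Definition~1.2 of \cite{bm_resolutions} -- that a morphism of coloured operads induces an adjunction between the associated categories of algebras. Since all of the genuinely combinatorial content has already been verified, the proof amounts to instantiating this principle with the correct operads and tracking the direction of the two functors.

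First I would fix the common colour set. Both $\mathsf{PT}$ and $\mathsf{WD}$ are $\mathfrak{C}$-coloured operads, where $\mathfrak{C}$ is the set of finite sign sequences, via the dictionary $A_i^\tin=\{j:s_{i,j}=1\}$ and $A_i^\tout=\{j:s_{i,j}=-1\}$ introduced above. By Lemma~\ref{lem:PTWD} the inclusion $\phi\colon \mathsf{PT}(s_1,\ldots,s_r;s_0)\hookrightarrow \mathsf{WD}(s_1,\ldots,s_r;s_0)$ is a levelwise inclusion of vector spaces, and by Proposition~\ref{prop:suboperad} it is compatible with the $\calS_r$-actions and the $\circ_i$ partial compositions. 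Hence $\phi$ is a sub-operad inclusion of $\mathfrak{C}$-coloured operads, which is precisely the hypothesis required to apply the general adjunction statement.

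Next I would specialise that statement to $\mathsf{Q}=\mathsf{PT}$ and $\mathsf{P}=\mathsf{WD}$, so that $\Alg(\mathsf{Q})=\mathsf{PA}$ and $\Alg(\mathsf{P})=\mathsf{CA}$, obtaining $\mathsf{PA}\rightleftarrows\mathsf{CA}$. Here the right adjoint $\phi^*\colon \mathsf{CA}\to\mathsf{PA}$ is the restriction functor: an oriented circuit algebra $\mathsf{V}$ carries an action of every wiring diagram, and restricting this action along $\phi$ equips $\mathsf{V}$ with an action of oriented planar tangles, exhibiting its underlying oriented planar algebra. I would note that this restriction is automatically compatible with composition and the symmetric group actions precisely because $\phi$ preserves both structures. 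The left adjoint $\phi_!\colon \mathsf{PA}\to\mathsf{CA}$ is the operadic left Kan extension; since the category of $\Bbbk$-vector spaces is cocomplete it exists for formal reasons (as a reflexive coequalizer of free algebras), and I would cite \cite{bm_resolutions} for its construction rather than build it by hand.

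I do not anticipate a genuine obstacle, as the conceptual heavy lifting is entirely contained in Proposition~\ref{prop:suboperad}. The only point requiring care is bookkeeping the variance of the adjoint pair, so that $\phi_!$ lands in $\mathsf{CA}$ and $\phi^*$ recovers the underlying-planar-algebra functor, matching the arrows drawn in the statement and in the summary diagram of the introduction.
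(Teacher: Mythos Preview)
Your proposal is correct and follows the same approach as the paper: the paper's proof is a one-line appeal to Proposition~\ref{prop:suboperad} together with the general adjunction principle for sub-operad inclusions recorded just before the statement, and you have simply unpacked that appeal with a bit more care about the variance of the two functors.
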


\begin{proof} 
	A direct consequence of Proposition~\ref{prop:suboperad}, given the discussion above.
\end{proof} 

\begin{remark}
	A similar argument will hold for the reader's favourite variation of planar and circuit algebras (unoriented, coloured, enriched) by appropriate modifications to the underlying operads. 
\end{remark}

\subsection{Virtual tangles}\label{subsec:VT}
Virtual tangles -- a generalization of classical tangles -- were the motivating example for the definition of circuit algebras, and so far they have been their main area of application \cite{Brochier, polyak2010alexanderconway}.  

An oriented (classical) tangle is a smooth embedding of an oriented $1$-manifold M into a $3$-dimensional ball $M\hookrightarrow B^3$, such that the boundary is mapped to the boundary of the ball: $\partial M \hookrightarrow S^2=\delta B^3$. Such embeddings are considered up to boundary-preserving ambient isotopy. In knot theory, tangles are often studied via their Reidemeister theory: project the ball onto a disc to obtain a {\em tangle diagram} where the image of $M$ has only transverse double-points in the interior, called {\em crossings}, and no double points on the boundary. Ambient isotopy is captured by three diagrammatic relations, called the Reidemeister $1$, $2$ and $3$ moves. 

\begin{figure}[h]
	\includegraphics[height=4cm]{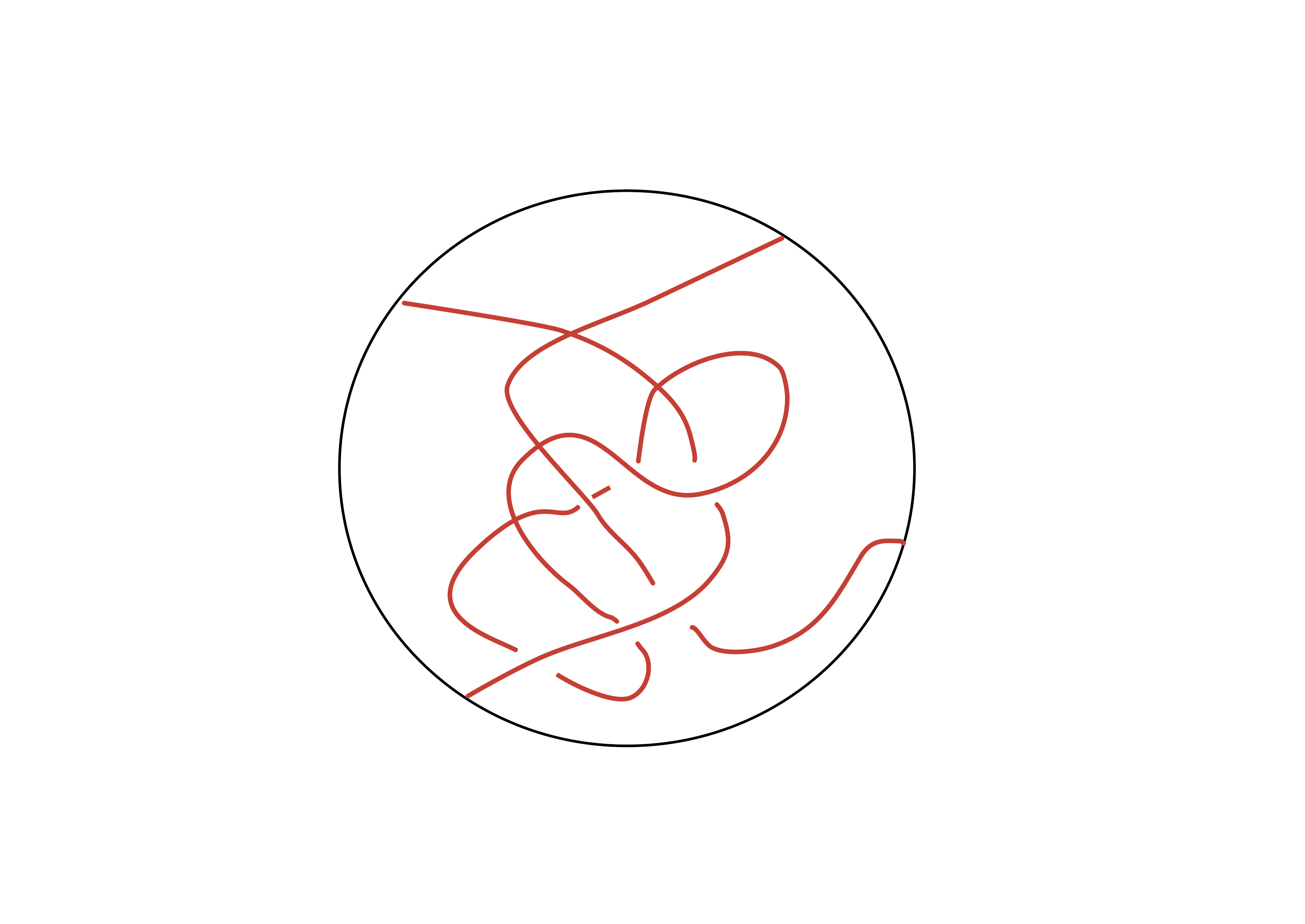}
	\caption{An example of a virtual tangle diagram.} 
\end{figure} 

This leads naturally to a presentation of tangles as a planar algebra \cite[Section 5]{BN:KHTangles}, generated\footnote{Generation in a planar algebra means, as one would expect, all possible applications of planar connection diagrams.} by an overcrossing $\overcrossing$ and an undercrossing $\undercrossing$, modulo the Reidemeister $1$, $2$ and $3$ moves shown in Figure~\ref{fig:Reidemeister} (\cite{BND:WKO2}). To summarize, tangles form a planar algebra $\mathcal{T}=PA\langle\overcrossing, \undercrossing~|~R1, R2, R3\rangle$. 

\begin{figure}[h]
	\centering
	\begin{subfigure}{.5\textwidth}
		\centering
		\includegraphics[height=4cm]{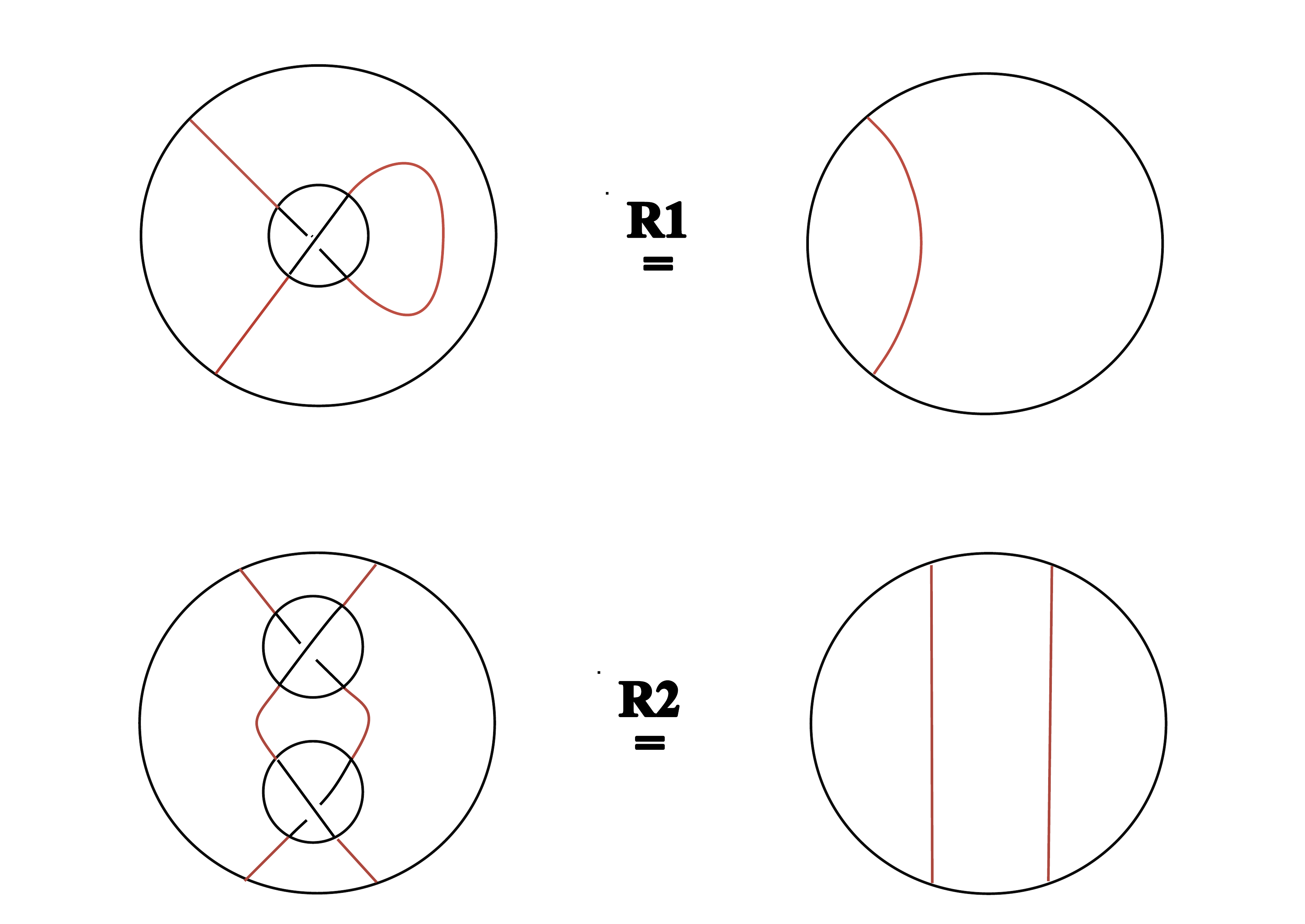}
	\end{subfigure}%
	\begin{subfigure}{.5\textwidth}
		\centering
		\includegraphics[height=4cm]{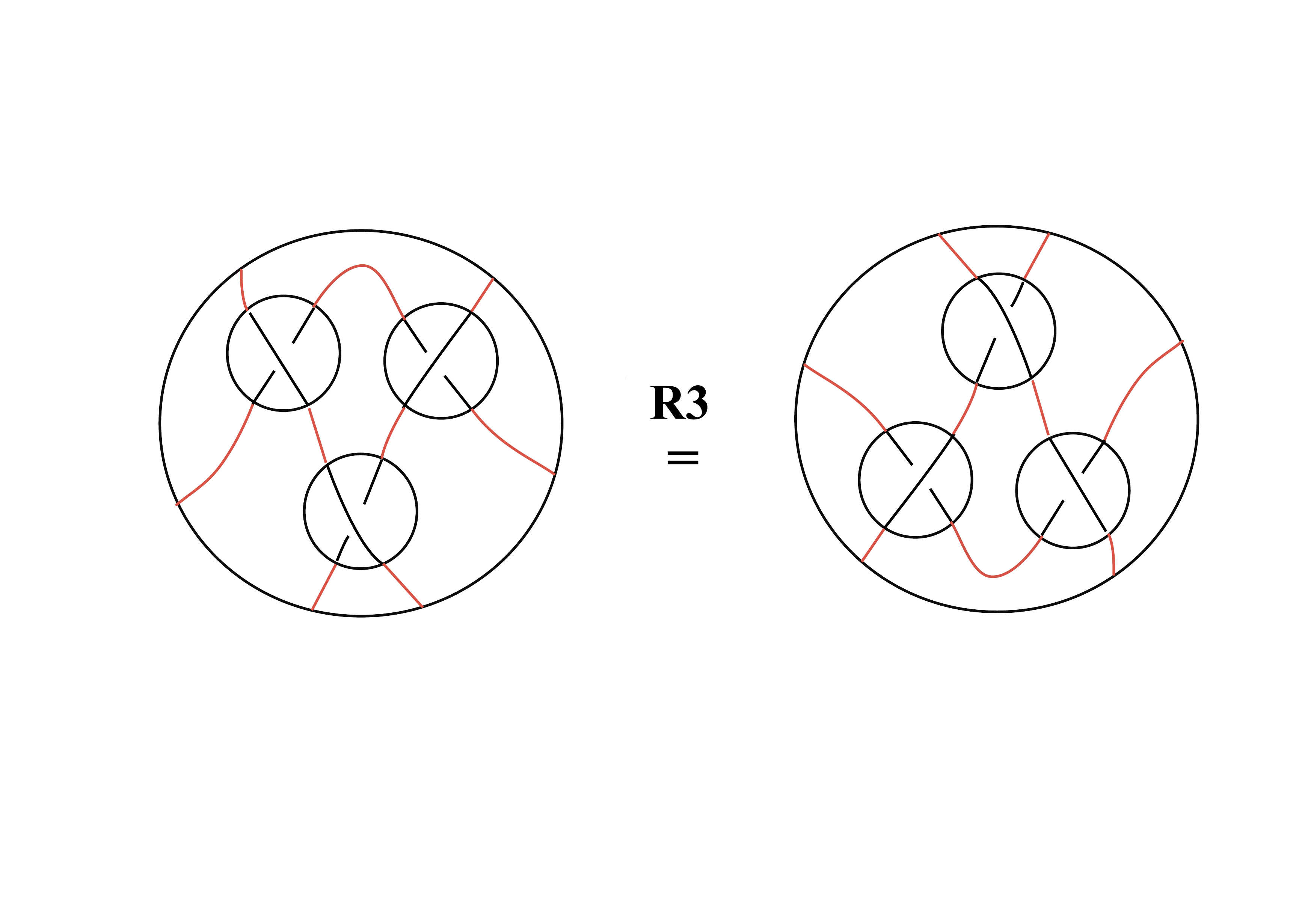}
	\end{subfigure}
	\caption{The classical Reidemeister moves, presented as planar algebra relations between the crossings (generators). The relations are imposed in all possible (consistent) strand orientations.}\label{fig:Reidemeister}
\end{figure}

Virtual tangles are a generalization of tangles to embeddings into thickened surfaces, rather than into $B^3$ -- see \cite{Kup} for a proof of this statement in the case of links. Virtual tangles are topologically interesting as a broader family of tangled objects, but they also possess interesting algebraic and combinatorial properties. As one example, their finite type invariants are conjecturally deeply connected with quantizations of Lie bialgebras, see \cite[Introduction]{BND:WKO1} for a brief overview.\footnote{This connection is further strengthened by the equivalence of circuit algebras and wheeled props in Theorem~\ref{thm:main}, as wheeled props play a role in formality theorems for Lie bialgebras \cite{MR3451536}. The direct comparison between circuit algebras and these wheeled props is part of the authors' motivation for this project.} 

Like classical tangles, virtual tangles can be described as virtual tangle diagrams modulo Reidemeister moves, and can be presented as a planar algebra in which one adds a {\em virtual crossing} as an additional generator $\virtualcrossing$ as well as additional virtual Reidemeister relations, shown in Figure~\ref{fig:VReidemeister}, which describe how virtual crossings interact with each other and with classical crossings. 

\begin{figure}[h]
	\centering
	\begin{subfigure}{.5\textwidth}
		\centering
		\includegraphics[height=4cm]{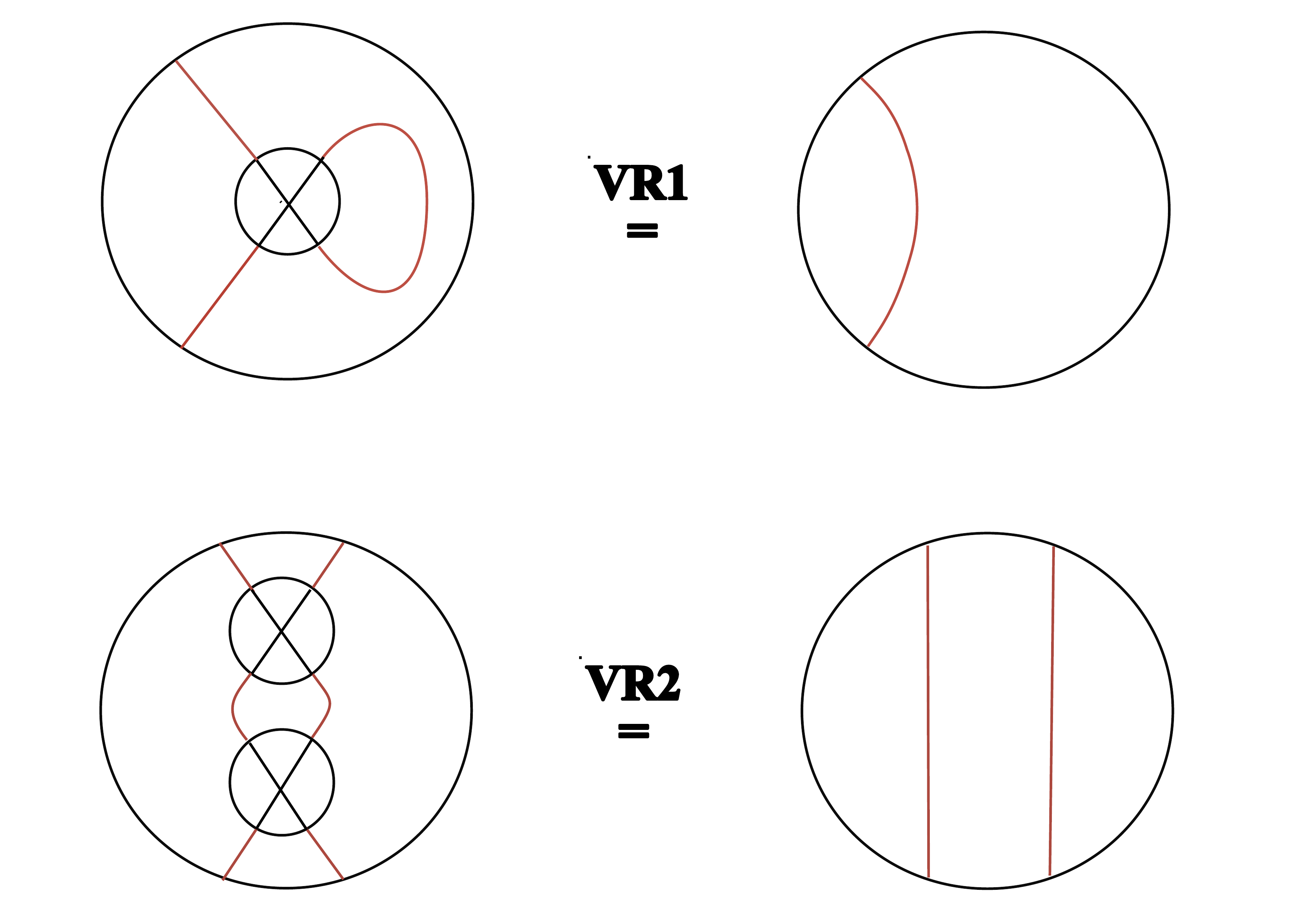}
	\end{subfigure}%
	\begin{subfigure}{.5\textwidth}
		\centering
		\includegraphics[height=4.5cm]{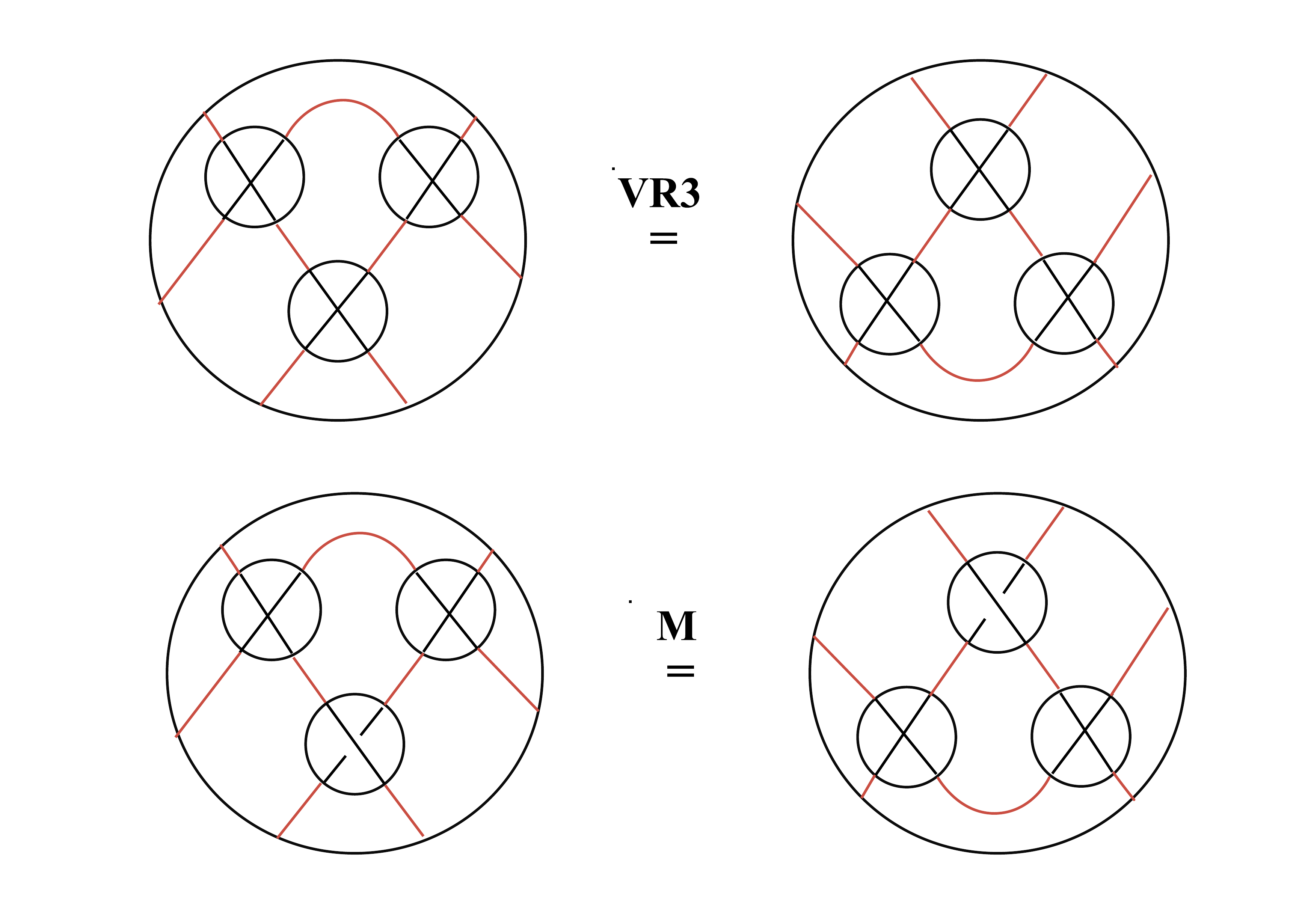}
	\end{subfigure}
	\caption{The "virtual" and "mixed" Reidemeister moves, as planar algebra relations. The relations are imposed in all possible (consistent) strand orientations.}\label{fig:VReidemeister}
\end{figure}


It is a basic fact of virtual knot theory called the {\em detour move} (see for example \cite[Section 2]{DK:Virtual}) that any purely virtual part of a strand of a virtual tangle diagram (i.e. a strand that only intersects others in virtual crossings) can be re-routed in {\em any other purely virtual way}. This suggests that the virtual crossing, morally speaking, isn't a ``true generator'' of virtual tangles, but merely a structural, diagrammatic artifact. 

This motivates the description of virtual tangles as a circuit algebra \cite[Section 3]{BND:WKO2}:  the generators are simply two crossings $\{\overcrossing, \undercrossing\}$, and the relations are the ordinary Reidemeister moves $\{R1, R2, R3\}$: \[\mathsf{vT}=\mathsf{CA}\left<\overcrossing, \undercrossing \mid R1, R2, R3\right>.\] In this description, ``virtual crossings'' only exist in the pictorial representation of wiring diagrams, however, as wiring diagrams are fundamentally combinatorial objects (Remark~\ref{rmk:Matching} and Lemma~\ref{lemma: graphs are wiring diagrams}), it is understood that they don't hold any mathematical meaning. The {\em detour move} is so tautological that it doesn't even make sense as a statement in a circuit algebra context. 

This also gives an illustrative example of the relationship between circuit algebras and planar algebras: virtual tangles naturally form a circuit algebra, and all circuit algebras are also planar algebras by Lemma~\ref{lem:PTWD} and Proposition~\ref{prop: adjunction PA and CA}, hence virtual tangles also form a planar algebra. However, we see above that the circuit algebra description in terms of generators and relations is simpler and in the authors' opinion more elegant. Classical tangles, on the other hand, naturally form a planar algebra, with a simple, elegant description, and do not admit a (reasonable) circuit algebra structure.


\section{Wheeled Props}\label{sec:WP}

A (linear) prop\footnote{Some authors prefer to capitalise the word PROP to emphasise that prop refers to ``PROduct and Permutation category."} is a strict symmetric tensor category in which the {\em monoid of objects} is freely generated by a single object. 
Props are often used to ``encode'' a class of algebraic structures. For example, there exists a prop $\mathsf{LieB}$ with the property that strict symmetric monoidal functors from $\mathsf{LieB}$ to the category of vector spaces are in one-to-one correspondence with Lie bialgebras \cite[Definition 2.1]{MR3451536}. 

Wheeled props are an extension of props which ``encode'' algebraic structures with a notion of \emph{trace}. They arise naturally in 
geometry, deformation theory and theoretical physics. For example, 
in the Batalin-Vilkovisky quantization formalism, formal germs of SP-manifolds are in one-to-one correspondence with representations of a certain wheeled prop \cite[Theorem 3.4.3]{mms}. More detailed examples of wheeled props are given in Section~\ref{section:examples of wheeled props}.

In this section we give a monadic definition for wheeled props (Definition~\ref{def:monad} and ~\ref{def:WP}) using the notion of {\em oriented graphs} and the operation of {\em graph substitution}. This is the most intuitive route -- in the authors' opinion -- to describing the relationship between wheeled props and circuit algebras in Section~\ref{sec:Eq}. An alternate, axiomatic, definition of wheeled props is presented in Section~\ref{section: categorical description}.

\subsection{Oriented graphs} For this purpose, a \emph{graph}\footnote{Also referred to as Borisov-Manin graphs. Sometimes defined as a quadruple of vertices, flags, attachment map and an involution. Our generalization allows for vertex-less loops.} is a {\em graph with open edges}: an edge may be adjacent to one, two or no vertices. Graphs may have free-floating loops with no vertices, as in Figure~\ref{fig:GraphExample}. There are several definitions of such graphs in the literature; in Definitions~\ref{def:Graph} and \ref{def: oriented graph} we adapt a combinatorial definition for {\em generalized graphs} from \cite{yj15}, which gives a direct comparison to wiring diagrams.

For a user-friendly preview before the technical definition, a graph consists of a collection $\calF$ of {\em flags}, or {\em half edges}, along with an (ordered) partition on $\calF$. The sets in the partition are the {\em vertices} of the graph, with the exception of one set, set aside for flags that are part of a free-floating edge or loop, and hence not incident to a vertex. This set in the partition is called the {\em exceptional cell}, denoted by $v_{\epsilon}$. 

An involution on flags $\iota: \calF\rightarrow \calF$ glues the flags together to form {\em edges}. Two flags $a, a' \in v_{\epsilon}$ with the property that $\iota(a)=a'$ form a {\em free-floating loop} detached from any vertex. In addition, there is a fixed point free involution $\pi$ on the $\iota$-fixed points of $v_{\epsilon}$, which assembles these flags into {\em free-floating edges} not attached to a vertex. 
To summarise: 

\begin{definition}\label{def:Graph}
	Fix a countable alphabet $\calI$. A \emph{labelled graph} $G$ is a finite set $\calF=\calF(G)$, called the set of {\em flags}, or \emph{half-edges}, together with 
	\begin{itemize}
		\item a finite ordered partition $\calF=(\coprod_{\alpha\in V(G)}v_{\alpha}) \sqcup v_\epsilon$,
		\item an involution $\iota:\calF\to \calF$ with the property $\iota(v_\epsilon) = v_\epsilon$, 
		\item a fixed point free involution $\pi$ on the $\iota$-fixed points in $v_\epsilon$, and 
		\item a {\em labelling} function $\lambda: \coprod_{v_{\alpha}\in V(G)} v_{\alpha} \to \calI$, injective on each $v_\alpha$, 
		\item an injective {\em boundary labelling} function  $\beta: \partial G \to \calI$, where $\partial G \subseteq \calF$ is the set of $\iota$-fixed flags.
	\end{itemize}
\end{definition}

The vertices $v_\alpha$, and the exceptional cell $v_\epsilon$, are subsets of flags, and as such they can be empty.  If a vertex $v_{\alpha}$ is empty, it is an {\em isolated vertex} with no incident flags. If $v_{\epsilon}$ is empty, then the graph has no free-floating edges or loops. The set $V(G)$ is called the vertex set. In this paper, we assume for simplicity that vertices are numbered, i.e. there is a bijection $V(G)\rightarrow \{1,2,...,r\}$.  From now on we will refer to labelled graphs simply as \emph{graphs}.

\begin{example}
	The graph in Figure~\ref{fig:GraphExample} has flags $$\calF=\{1, 2, 3, 4, 5, 6, 7, 8, 9, 10,11, 12,13\}$$ partitioned into vertices: $v_1=\{1,2,3,4,5\}$ and $v_2=\{6,7, 8,9\}$ with exceptional cell $v_{\epsilon}=\{10,11,12,13\}$. The involution $\iota$ acts on $\calF$ with $\iota(5)=7$, $\iota(4)=6$ and $\iota(12)=13$, the last pair making up the floating circle.  The $\iota$-fixed points are $1$, $2$, $3$, $8$, $9$, $10$ and $11$. The fixed point free involution $\pi$ acts on the $\iota$-fixed points in $v_{\epsilon}$ and sets $\pi(10)=11$. The labelling $\lambda: v_1\cup v_2\rightarrow \calI$  labels each flag $k$ for $k=1,...,9$ by $a_k$. In this example, $\partial G= \{1, 2, 3, 8, 9, 10, 11\}$, and the boundary labelling assigns the label $\beta(k)=i_k$ to each $k \in \partial G$. Note that the flags in the floating loop remain un-labelled. 
	\begin{figure}\label{figure:graph with labels}
		\begin{centering}
			\includegraphics[height=4.5cm]{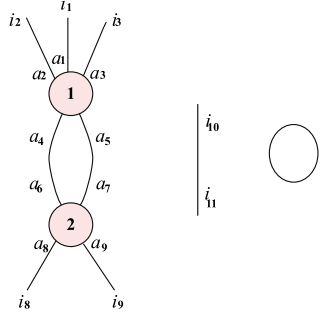}
		\end{centering}
		\caption{A labelled graph with $13$ flags partitioned into vertices: $v_1=\{1,2,3,4,5\}$ and $v_2=\{6,7, 8,9\}$ with exceptional cell $v_{\epsilon}=\{10,11,12,13\}$. } \label{fig:GraphExample} 
	\end{figure}
\end{example}

\begin{definition}\label{def: oriented graph}
	An {\em oriented graph} is a graph $G$ with an orientation function \linebreak $\delta: \calF_{0} \to \{-1,1\}$, where $\calF_{0}=\coprod_{\alpha\in V(G)}v_{\alpha} \sqcup \{\varphi \in v_{\epsilon}: \iota(\varphi)= \varphi\},$ such that $\delta(\iota x)=-\delta(x)$ whenever $\iota x\not=x$, and $\delta(\pi x)=-\delta(x)$ whenever $\pi$ is defined. 
	We write $$\tin(v_{\alpha})=\lambda(\delta |_{v_\alpha})^{-1}(1), \text{ and } \tout(v_{\alpha}) = \lambda(\delta |_{v_\alpha})^{-1}(-1),$$ 
	$$\tin(G)=\beta(\delta |_{\partial G})^{-1}(1), \text{ and } \tout(G) = \beta(\delta |_{\partial G})^{-1}(-1),$$ 
	where $\delta |_{v_\alpha}$ and $\delta |_{\partial G}$ are the restrictions of $\delta$ to a vertex $v_\alpha$ and the boundary $\partial G$, respectively. For an oriented graph we only require the labelling function $\lambda$ to be injective on the sets $(\delta |_{v_\alpha})^{-1}(1)$ and $(\delta |_{v_\alpha})^{-1}(-1)$ as opposed to all of $v_\alpha$. Similarly, the boundary labelling $\beta$ is only required to be injective on $(\delta |_{\partial G})^{-1}(1)$ and $(\delta |_{\partial G})^{-1}(-1)$.
\end{definition}

In pictures we indicate the direction function by drawing an arrow from a negative flag $a$ to its positive pair $\iota (a)$; or  from a positive flag $a$ to its negative pair $\pi(a)$ for free edges. Free loops do not have directions. See Figure~\ref{fig:Smod}.

For non-exceptional $\iota$-fixed flags  -- that is, half-edges attached to a vertex -- negative flags are drawn as outgoing\footnote{Outgoing edges are also commonly called outputs, and incoming edges are called inputs. We use the outgoing/incoming terminology to avoid confusion with circuit algebra inputs and outputs.} and positive flags are drawn as incoming. As an example, the oriented corolla in Figure~\ref{fig:corolla} has incoming, or positive, flags with (boundary) labels $i_1, i_2$ and $i_3$, and outgoing (negative) flags labelled $j_1$ and $j_2$. Figure~\ref{fig:directed graph with boundary} shows another example of a directed graph. Boundary labels are shown with incoming flags labelled $i_\alpha$ and outgoing flags labelled $j_\beta$ for $\alpha= 1,2,3$ and $\beta=1,2$. Vertex labels (i.e. the $\lambda$ labelling) are suppressed. Note that any complete edge has a well-defined beginning and end. The free loop has no direction. 

The labels of the incoming and outgoing flags adjacent to a vertex $v_{\alpha}$ are called the \emph{neighbourhood} of the vertex\footnote{Sometimes the vertex is said to be \emph{labelled} by the input and output flags contained in the neighbourhood, and the graph is {\em labelled} by its boundary flags.}:
$$\text{nbh}(v)=(\tin(v);\tout(v))=(\lambda(\delta|_{v}^{-1})(1);\lambda(\delta|_{v}^{-1})(-1)).$$ 
By a slight abuse of notation (using $\partial G$ to denote the set of boundary flags or their labels, depending on context) for oriented graphs we also write:
$$\partial G=(\tin(G);\tout(G))=(\beta(\delta|_{\partial G}^{-1})(1);\beta(\delta|_{\partial G}^{-1})(-1)).$$

\begin{figure}
	\centering
	\begin{subfigure}{.5\textwidth}
		\centering
		\includegraphics[height=4.5cm]{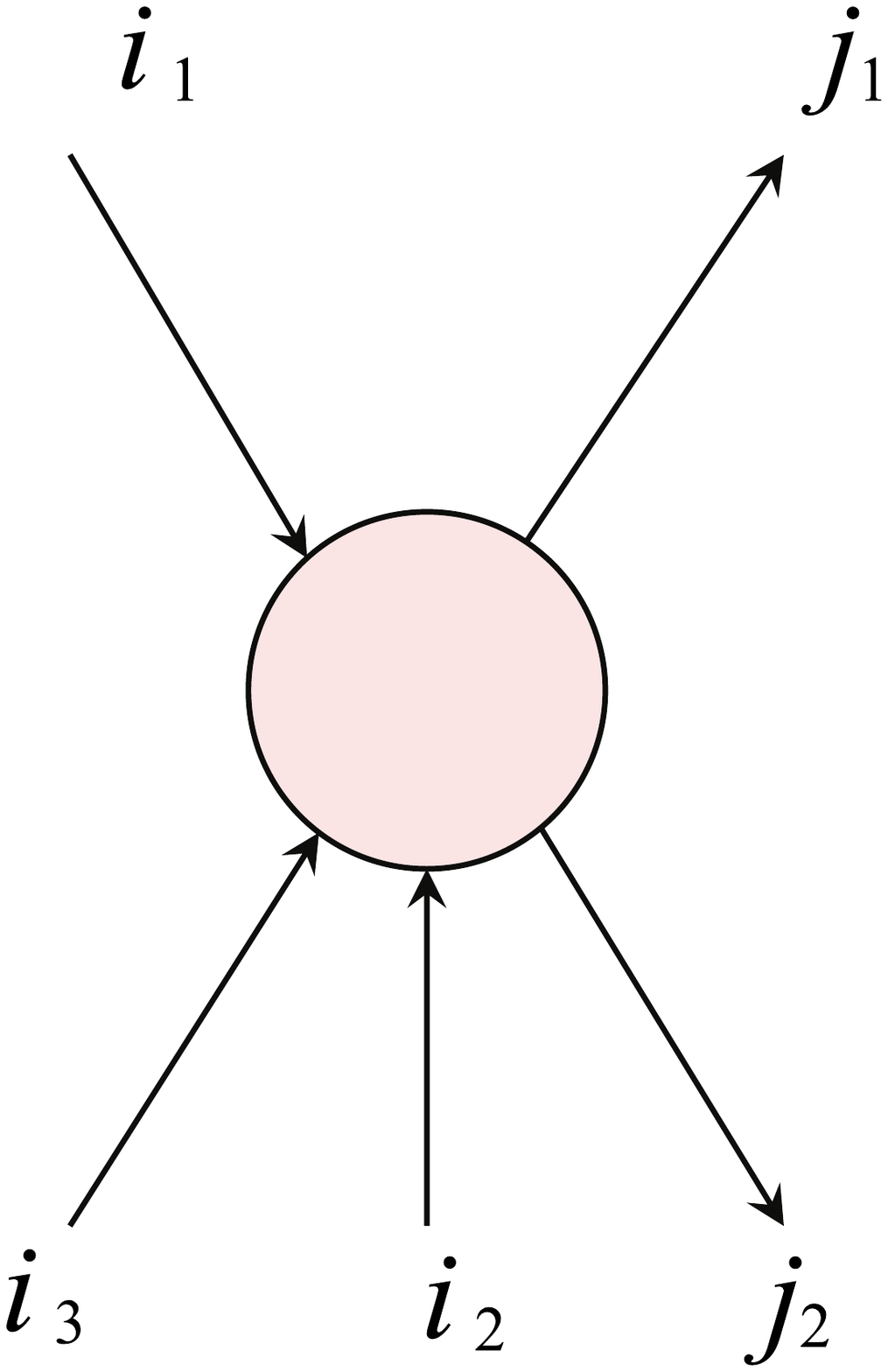}
		\caption{A corolla $C_{(I;J)}$.}\label{fig:corolla}
	\end{subfigure}%
	\begin{subfigure}{.5\textwidth}
		\centering
		\includegraphics[height=4.5cm]{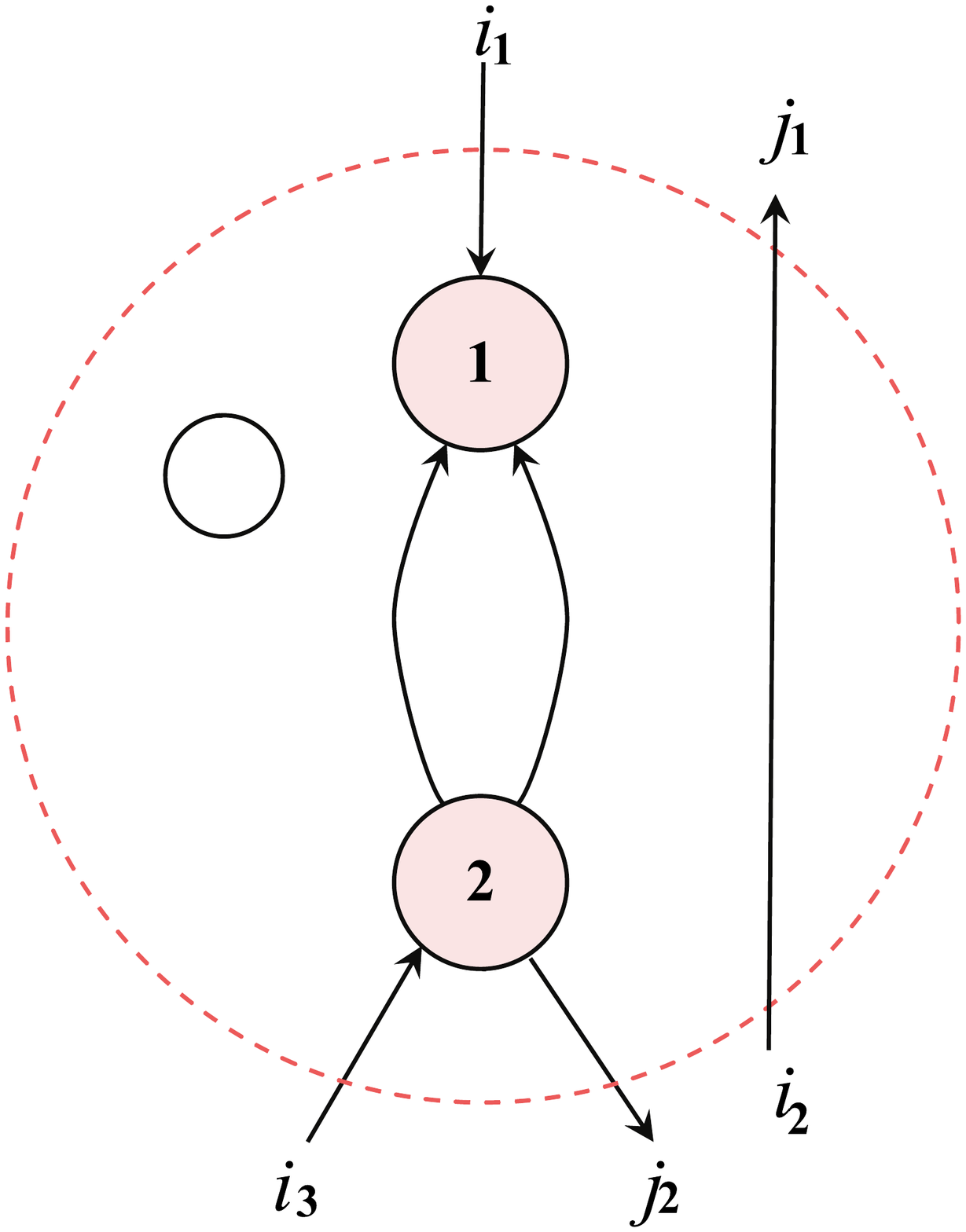}
		\caption{ A directed graph with inputs $\{i_1,i_2,i_3\}$ and outputs $\{j_1,j_2\}$. The dashed red circle indicates the boundary of $G$, $\partial{G}$. }\label{fig:directed graph with boundary}
	\end{subfigure}
	\caption{Two depictions of oriented graphs with incoming labels $\{i_1,i_2,i_3\}$ and outgoing labels $\{j_1,j_2\}$.}\label{fig:Smod}
\end{figure}

\begin{definition}\label{def:graph isomorphism}
	We say that two oriented labelled graphs $G_1$ and $G_2$ are \emph{isomorphic} if there is a bijection of flags $\phi:\calF(G_1)\rightarrow \calF(G_2)$, which preserves the ordered partitions, the involutions, the orientation function, and the labelling.
\end{definition}

The operation of {\em graph substitution} for oriented\footnote{The notion also exists for non-oriented graphs, by simply omitting the condition that orientations match.} graphs parallels the composition of wiring diagrams in a circuit algebra. Intuitively, graph substitution ``glues'' a graph $H_v$ into a vertex $v$ of a graph $G$ in such a way that $\partial H_v=\text{nbh}(v)$. The result is a new graph $G(H_v)$. While the intuition is clear, writing down the resulting graph in terms of involutions is tedious due to the possible creation of floating loops: see Figure~\ref{fig:graphsubloop} for an example. Below we give an intuitive definition and for full details refer the reader to \cite[Chapter 5]{yj15}. 

\begin{figure}
	\begin{centering}
		\includegraphics[height=4.5cm]{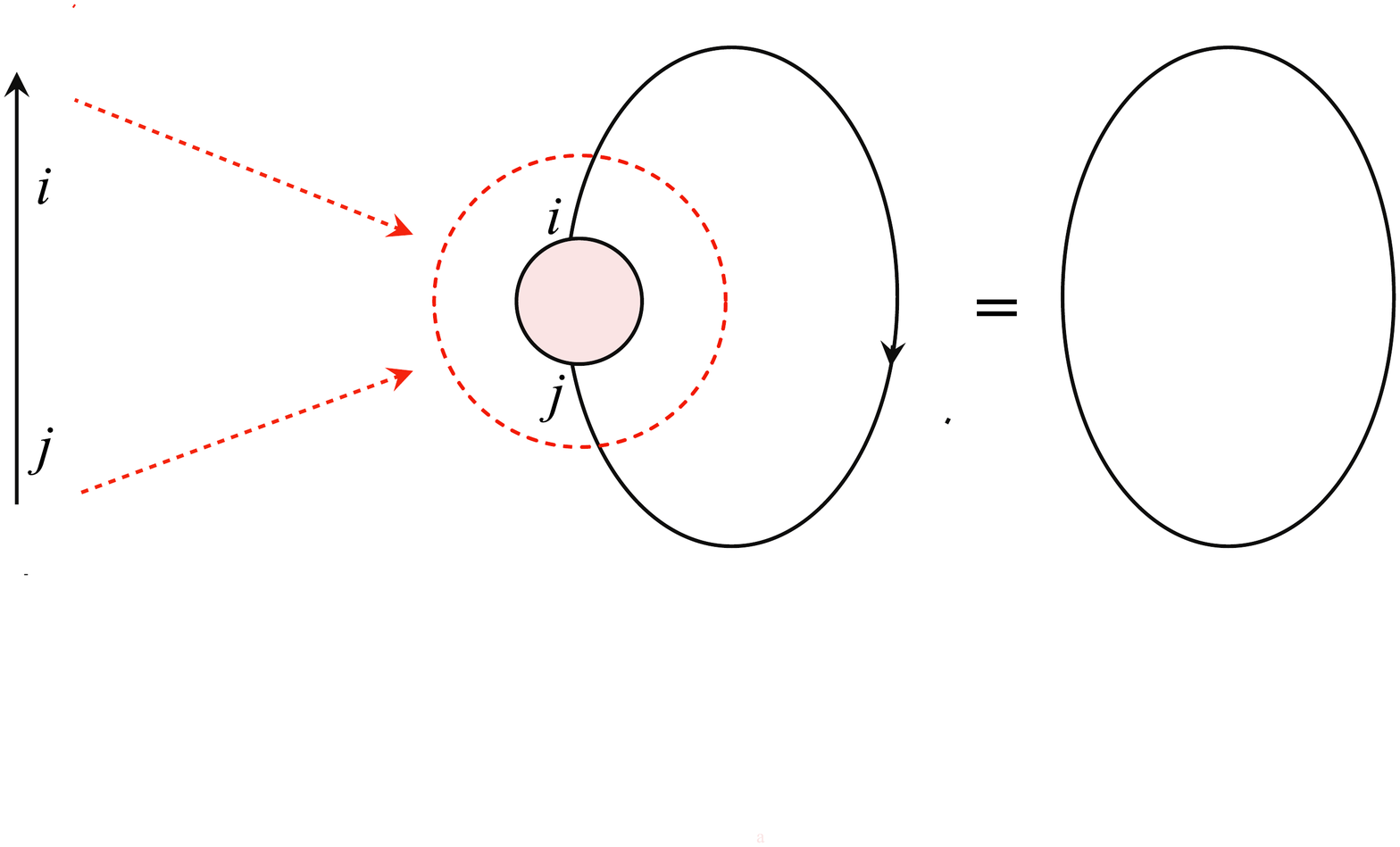}
	\end{centering}
	\caption{An example of a graph substitution which creates a free-floating loop.}\label{fig:graphsubloop}
\end{figure}

\begin{definition}\label{def: graph substitution}
	Let $G$ be an oriented graph, $v \in V(G)$ a vertex, and $H_{v}$ an oriented graph with $\partial H_v=(\tin(H_v);\tout(H_{v})) =(\tin(v);\tout(v))=\text{nbh}(v)$.   Define the \emph{graph substitution}
	$G(H_v)$
	as the graph obtained by
	\begin{itemize}
		\item
		replacing the vertex $v \in V(G)$ with the graph $H_{v}$, and
		\item
		identifying each leg (boundary flag) of $H_{v}$ with the flag of $v$ with the same label and same orientation.
	\end{itemize}
\end{definition}

The boundary of the graph $G(H_v)$ is identified with the boundary of $G$.  Moreover, there is a canonical identification of vertex sets $V(G(H_v))= (V(G)\setminus \{v\}) \sqcup V(H_v)$. The ordering (numbering) of the vertices of $G(H_v)$ is as follows: first follow the ordering of $V(G)$ before $v$, then the ordering of $V(H_v)$, then the ordering of $V(G)$ after $v$.

Graph substitution is \emph{associative} and \emph{unital}, see Theorem 5.32 and Lemma 5.31 in 
\cite{yj15}. The unit for substitution into a given vertex $v$ of a graph $G$ is a {\em corolla}: a single vertex with the 
same number of incoming and outgoing legs as $v$, with the same labelling. An example of a corolla is shown in Figure~\ref{fig:corolla}. 	

Associativity implies that graph substitution can be carried out {\em en masse}, given a substitutable 
graph $H_v$ for each of the vertices of a graph $G$. This operation is denoted $G(\{H_v\}_{v\in V(G)})
$, or $G(\{H_v\})$ for short. The boundary flags of $G(\{H_v\})$ are identified with the boundary flags of $G$, and there is a 
canonical identification

\begin{equation}\label{eq:GraphSubst}
V\left(G(\{H_v\})\right) = \coprod_{v\in V(G)} V(H_v).
\end{equation}  

\begin{example}\label{example: permuted corolla}
	Graph substitution captures important structural changes in graphs. A key example is the \emph{relabelling} substitution: one can change the boundary labelling of a graph $G$ arbitrarily, by substituting $G$ into a corolla whose vertex labels agree with the boundary labels of $G$, but whose boundary labels are the desired new labels. The example shown in Figure~\ref{figure:graph relabelling} is a graph substitution which implements a permutation of boundary labels.  
	
	\begin{figure}
		\begin{centering}
			\includegraphics[height=5cm, width=7cm]{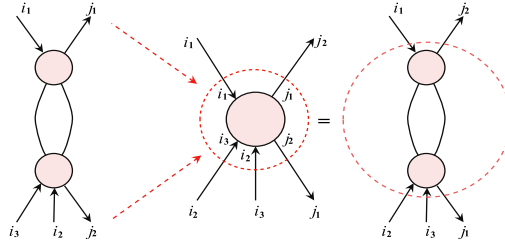}
		\end{centering}
		\caption{An example of a graph substitution which permutes the labels of a graph.}\label{figure:graph relabelling}
	\end{figure}
\end{example}

\subsection{Wheeled props}\label{subsec:WP}
In this section, wheeled props are defined as algebras over a monad as in \cite[Definition 2.1.7]{mms}. We first include some short reminders of the necessary categorical notions.  

An endofunctor $T:\mathcal{C}\rightarrow\mathcal{C}$ is called a \emph{monad} if it comes equipped with two natural transformations $\mu:T^2\rightarrow T$ and $\eta: \text{Id}_{\mathcal{C}}\rightarrow T$ called \emph{multiplication} and \emph{unit} which satisfy the associativity (\ref{monad ass}) and unit conditions of a monoid (\ref{monad id}). 

\noindent\begin{minipage}{.5\linewidth}
	\begin{equation}\label{monad ass}
	\begin{tikzcd}
	TTT\arrow[r, "T\mu"]\arrow[d, "\mu T"]& TT\arrow[d, "\mu"]\\
	TT\arrow[r, "\mu"] &T
	\end{tikzcd} 
	\end{equation} 
\end{minipage}%
\begin{minipage}{.5\linewidth}
	\begin{equation}\label{monad id}
	\begin{tikzcd}
	T\arrow[r, "T \eta"]\arrow[d,"\eta T", swap] \arrow[dr, "="] & T^2 \arrow[d, "\mu"]\\
	T^2 \arrow[r,"\mu", swap]& T
	\end{tikzcd}
	\end{equation}
\end{minipage}

Given a monad $(T,\mu,\eta)$ on a category $\mathcal{C}$, a \emph{$T$-algebra}  is a pair $(X,\gamma)$ where $X$ is an object in $\mathcal{C}$ together with a structure map $\gamma: T(X)\rightarrow X$ in $\mathcal{C}$ which commutes with the monad multiplication \eqref{alg over monad} and with the unit \eqref{alg id}. A morphism $\phi:X\rightarrow Y$ between $T$-algebras is a morphism in $\mathcal{C}$ which is compatible with the $T$-action \eqref{category of alg}. The category of all $T$-algebras in $\mathcal{C}$ is denoted $\Alg_{T}(\mathcal{C})$.

\begin{minipage}{.3\linewidth}
	\begin{equation}\label{alg over monad}
	\begin{tikzcd}
	T(T(X))\arrow[r, "T(\gamma)"]\arrow[d, "\mu_X"]& T(X)\arrow[d, "\gamma"]\\
	T(X)\arrow[r, "\gamma"] &X
	\end{tikzcd} 
	\end{equation} 
\end{minipage}%
\begin{minipage}{.3\linewidth}
	\begin{equation}\label{alg id}
	\begin{tikzcd}
	X\arrow[r,"\eta_X"]\arrow[dr,"id", swap] & T(X) \arrow[d, "\gamma"]\\
	& X
	\end{tikzcd}
	\end{equation}
\end{minipage}
\begin{minipage}{.3\linewidth}
	\begin{equation}\label{category of alg}
	\begin{tikzcd}
	T(X)\arrow[d,"\gamma_X"] \arrow[r, "T(\phi)"]& T(Y)\arrow[d,"\gamma_Y"]\\
	X\arrow[r,"\phi"] &Y
	\end{tikzcd}
	\end{equation} 
\end{minipage}

\begin{example}
	A standard example of a monad is the ``monad for monoids'' $T:\mathbf{Set}\rightarrow\mathbf{Set}$ given by $TX=\coprod_{n\geq 0}X^{n}$, that is, words in $X$. Monadic multiplication is concatenation of words. The unit is given by the inclusion $\eta_X:X=X^1\hookrightarrow TX$.
	An algebra over this monad is a choice of set $M$ together with an associative and unital multiplication $M\times M\rightarrow M$. In other words, an algebra over this monad is an associative monoid in $\mathbf{Set}$. 
\end{example}

The monad ``encoding'' wheeled props arises from graph substitution. In short, it is an {\em endofunctor on categories of equivariant vector space valued diagrams indexed by a category of graphs}. We explain this sentence in detail over the next few pages; the first step is to introduce the category of $\calS$-bimodules in $\mathsf{Vect}$. 

\begin{definition}
	Given a countable alphabet $\calI$, an {\em $\mathcal S$-bimodule} $\bE$ is a family of vector spaces $\{\sE[I;J]\}_{I;J}$, where $I$ and $J$ run over finite subsets of $\calI$. Each $\sE[I;J]$ is equipped with commuting left and right actions of the symmetric groups $S_{I}$  and $S_{J}$, respectively. 
	A morphism $f:\bE\rightarrow\mathbf{E}'$ of $\calS$-bimodules is an $S_I\times S_J$-equivariant family of linear maps $f_{I,J}:\mathsf{E}[I;J]\rightarrow\mathsf{E}'[I;J]$. The category of $\calS$-bimodules is denoted by $\mathsf{Vect}^{\calS}$. \end{definition} 

In the context of this paper $\mathcal S$-bimodules are used to \emph{decorate} vertices of graphs: if $\bE$ is an $\calS$-bimodule and $v$ is a vertex of a graph G with $(\tin(v);\tout(v))=(I;J)$, then $v$ can be {\em decorated} by an element of $\sE[I;J]$ and, to decorate the entire graph, these elements are tensor multiplied together in the order of the vertices of $G$:

\begin{definition}\label{def: decoration}
	Given an $\mathcal S$-bimodule $\bE=\{\sE[I;J]\}$ and an isomorphism class of directed graphs $[G]$, we define the {\em $\bE$-decorated graph} as the tensor product \[ \bE[G]=\bigotimes_{v\in V(G)} \sE[\text{in}(v);\text{out}(v)].\] If $G$ has no vertices, then $\bE[G]=\Bbbk$, the tensor unit. 
\end{definition}

A counter-intuitive aspect of this Definition~\ref{def: decoration} is that the {\em $\bE$-decorated graph} is a vector space which does not ``remember'' the isomorphism class of $G$. The following lemma makes this observation explicit, by noting that $\bE[G]$ only depends on the input and output sets of the vertices of $G$:
\begin{lemma}\label{lem:DecorationVertexLabels}
	Assume that $G$ and $G'$ are directed graphs with $V(G)=\{v_1,...,v_n\}$ and $V(G')=\{w_1,...,w_n\}$, and assume furthermore that $(\tin(v_i);\tout(v_i))=(\tin(w_i);\tout(w_i))$, for all $i=1,...,n$. Then there is a canonical isomorphism $\bE[G]\cong\bE[G']$.
\end{lemma}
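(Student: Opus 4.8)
The plan is to unwind Definition~\ref{def: decoration} and observe that the claimed isomorphism is in fact an equality of tensor factors, position by position. Recall that the vertices of each graph are numbered, so that $\bE[G]$ is the \emph{ordered} tensor product
$$\bE[G]=\sE[\tin(v_1);\tout(v_1)]\otimes\cdots\otimes\sE[\tin(v_n);\tout(v_n)],$$
and similarly for $\bE[G']$ with the $w_i$ in place of the $v_i$. The key point I would isolate is that the $i$-th tensor factor $\sE[\tin(v_i);\tout(v_i)]$ depends only on the pair of label sets $(\tin(v_i);\tout(v_i))$, and on nothing else about $G$ --- in particular, not on the involutions $\iota,\pi$, the edge structure, or the number of free-floating loops.

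First I would fix the vertex numbering, which supplies a position-by-position matching $v_i\leftrightarrow w_i$ requiring no arbitrary choices; this is precisely the source of the canonicity asserted in the statement. By hypothesis $(\tin(v_i);\tout(v_i))=(\tin(w_i);\tout(w_i))$ as pairs of finite subsets of $\calI$ for every $i$, so $\sE[\tin(v_i);\tout(v_i)]$ and $\sE[\tin(w_i);\tout(w_i)]$ are literally the same vector space. Hence the two ordered tensor products agree factor by factor, and the canonical isomorphism $\bE[G]\cong\bE[G']$ is simply the identity on the common underlying space $\bigotimes_{i=1}^{n}\sE[\tin(v_i);\tout(v_i)]$. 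When either graph has no vertices, both sides are the empty tensor product $\Bbbk$ and the claim is immediate.

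I expect no genuine obstacle here: the content of the lemma is entirely definitional, and the argument amounts to reading off that $\bE[-]$ factors through the ordered list of vertex neighbourhoods. The only point requiring care is the bookkeeping of the ordering convention from Definition~\ref{def: decoration} --- one must invoke the vertex numbering to pin down which factor is identified with which, since without it the symmetry of the tensor product would introduce an inconsequential but noncanonical permutation. This is exactly the substance of the remark following the lemma (that $\bE[G]$ does not remember the isomorphism type of $G$), and it is what will make the monad construction on decorated graphs well-defined further on.
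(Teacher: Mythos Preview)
Your proof is correct and follows the same approach as the paper, which simply states that the lemma is ``immediate from the definition.'' You have spelled out in detail precisely what that immediacy consists of: that $\bE[G]$ is an ordered tensor product depending only on the list of vertex neighbourhoods, so the hypothesis forces a factor-by-factor equality.
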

\begin{proof}
	Immediate from the definition.
\end{proof}

The next lemma establishes the relationship between decorations and graph substitution:	
\begin{lemma}\label{lemma:decorations commute}
	Given an $\mathcal S$-bimodule $\bE$, an oriented graph $G$ and a collection $\{H_v\}_{v\in V(G)}$ so that the graph substitution $G(\{H_v\})$ is defined, there is a natural isomorphism $\bE[G(\{H_v\})]\cong\bigotimes_{v\in V(G)} \bE[H_v]$. 
\end{lemma}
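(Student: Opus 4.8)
The plan is to reduce everything to the canonical identification of vertex sets in Equation~\eqref{eq:GraphSubst} together with the fact that decoration depends only on the neighbourhoods of vertices, which is the content of Lemma~\ref{lem:DecorationVertexLabels}. First I would observe that, by Definition~\ref{def: decoration}, both sides of the claimed isomorphism are just tensor products of the spaces $\sE[\tin(w);\tout(w)]$ indexed by vertices $w$. The whole content of the lemma is therefore to match up these indexing sets and to verify that the neighbourhood data $(\tin(w);\tout(w))$ agree vertex-by-vertex.

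The key step is the observation that graph substitution does not alter the flags incident to the \emph{internal} vertices of the $H_v$. Indeed, by Definition~\ref{def: graph substitution}, forming $G(\{H_v\})$ only identifies the boundary flags (legs) of each $H_v$ with the flags of the vertex $v$; the flags attached to the vertices of $H_v$, together with their labels and orientations, are left untouched. Consequently, for every vertex $w \in V(H_v)$ its neighbourhood $(\tin(w);\tout(w))$, computed inside $G(\{H_v\})$, coincides with its neighbourhood computed inside $H_v$. Combining this with the canonical identification of vertex sets from Equation~\eqref{eq:GraphSubst} then yields
\begin{align*}
\bE[G(\{H_v\})] &= \bigotimes_{w \in V(G(\{H_v\}))} \sE[\tin(w);\tout(w)] \\
&\cong \bigotimes_{v\in V(G)} \bigotimes_{w \in V(H_v)} \sE[\tin(w);\tout(w)] = \bigotimes_{v\in V(G)} \bE[H_v].
\end{align*}

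Next I would address the one point that genuinely requires care, namely the ordering of the tensor factors. The ordered structure on $V(G(\{H_v\}))$ prescribed after Equation~\eqref{eq:GraphSubst} lists the vertices of each $H_v$ consecutively, grouped according to the order of $V(G)$; this is precisely the order in which the factors appear on the right-hand side, so the isomorphism above is induced purely by the associativity constraint of $\otimes$ and involves no nontrivial permutation. Naturality in $\bE$ is then immediate: on each factor the comparison map is an identity $\sE[\tin(w);\tout(w)] = \sE[\tin(w);\tout(w)]$, a morphism $f\colon\bE\to\bE'$ of $\calS$-bimodules acts factorwise, and the required square commutes by functoriality of $\otimes$ and naturality of the associativity constraint.

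The main subtlety, which in fact resolves in our favour, is the possible creation of free-floating loops under substitution, as in Figure~\ref{fig:graphsubloop}. Such loops carry no vertices, and since $\bE[\,\cdot\,]$ tensors only over the vertex set they contribute nothing to either side; this is exactly why the bijection on vertex sets, rather than any finer homeomorphism-type data, is all that is needed. I therefore do not anticipate any real obstacle: much like Lemma~\ref{lem:DecorationVertexLabels}, the statement is a bookkeeping consequence of the definitions, with the only things to verify being neighbourhood-preservation and the compatibility of orderings.
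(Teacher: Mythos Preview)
Your proposal is correct and follows essentially the same approach as the paper: the paper's proof is the single sentence ``This is a direct consequence of Formula~(\ref{eq:GraphSubst}),'' and your argument is precisely an unpacking of why that vertex-set identification, together with neighbourhood-preservation under substitution, yields the claimed isomorphism. Your additional remarks on ordering of tensor factors and the harmlessness of created floating loops are accurate elaborations but go beyond what the paper records.
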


\begin{proof}
	This is a direct consequence of Formula~(\ref{eq:GraphSubst}). 
\end{proof}

Next, we define a {\em category of graphs}:

\begin{definition}\label{def:GIJ}
	Let $\calG(I;J)$ denote the category whose 
	objects are strict isomorphism classes (as in Definition~\ref{def:graph isomorphism}) of graphs [G] with $\partial{G}=(\tin(G);\tout(G))=(I;J).$
	Morphisms in $\calG(I;J)$ are bijections of flags which preserve the 
	vertex sets, involutions, directions, and labels. Morphisms may permute the vertex order -- that is, permute ordering of the partition, while fixing the exceptional cell. The category $\calG$ is the coproduct $\coprod \calG(I;J)$, where $I$ and $J$ run over finite subsets of $\calI$. 
\end{definition} 

Note that morphisms in $\calG(I;J)$ are ``isomorphisms'' of graphs in a slightly looser sense than the strict isomorphisms of Definition~\ref{def:graph isomorphism}. 

To summarise, the category $\mathsf{Vect}^{\calS}$ is {\em indexed} by graphs, that is, graph decoration defines the object-level of a bifunctor $\mathsf{Vect}^{\calS}\times \calG\rightarrow \mathsf{Vect}$, were $\calG$ is the category of graphs.  

We're now ready to define the {\em monad of graph substitution}: this is an endofunctor \[\begin{tikzcd}\mathsf{F}:\mathsf{Vect}^{\calS}\arrow[r]&\mathsf{Vect}^{\calS}\end{tikzcd}.\] At the level of objects, $\mathsf{F}$ is defined by sending an $\calS$-bimodule $\bE$ to an $(I;J)$-indexed collection of vector spaces, with symmetric group actions to be defined afterwards. As a vector space, \[ \mathsf{F}\sE[I;J] := \colim\limits_{[G]\in\calG(I;J)} \bE[G],\]where $\bE[G]$ is the graph decoration as in Definition~\ref{def: decoration}. The colimit here is essentially a direct sum of vector spaces; for convenience, we briefly recall the definition in this context:

The colimit $\colim\limits_{[G]\in\calG(I;J)} \bE[G]$ is a vector space, equipped with, for every $[G]\in \calG(I;J)$, a structure map $\bE[G] \to \colim\limits_{[G]\in\calG(I;J)} \bE[G]$. Each morphism $[G] \to [G']$ of $\calG(I;J)$ induces a tensor factor permuting isomorphism $\bE[G] \to \bE[G']$, and the structure maps are compatible (form commutative triangles) with each of these isomorphisms. The colimit has the universal property that given any vector space $\sV$, with maps $f_{[G]}:\bE[G] \to \sV$ for all  $[G]\in \calG(I;J)$, all the maps $f_{[G]}$ factor through the colimit via the structure maps, and the colimit is the unique vector space, up to a unique isomorphism, with this property.  

We say that $\mathsf{F}\sE[I;J]$ is the \emph{space of $\bE$-decorations} of (isomorphism classes of) graphs $G$ with boundary labels $(I;J)$. 

To make $\mathsf{F}$ an endofunctor, it needs to take its values in $\mathsf{Vect}^{\calS}$, that is, we need to define commuting left $S_I$ and right $S_J$ actions on $\mathsf{F}\sE[I;J]$. Note that $S_I$ and $S_J$ have natural commuting actions on the indexing category $\calG(I;J)$, by permuting the incoming and outgoing boundary labels of a graph $G\in \calG(I;J)$. This can be accomplished by substituting into the appropriate label permuting corolla, as in Figure~\ref{figure:graph relabelling}. For $\sigma\in S_I$, $\tau\in S_J$, let $\sigma G\tau := C_{\sigma,\tau}(G)$ denote the graph with permuted boundary labels, where $C_{\sigma,\tau}$ is the label permuting corolla. Then, by Lemma~\ref{lem:DecorationVertexLabels}, there is a canonical isomorphism $\bE[G]=\bE[\sigma G\tau]$. The action of the pair $(\sigma,\tau)$ on $\mathsf{F}\sE[I;J]$ sends the summand $\bE[G]$ to $\bE[\sigma G \tau]$ via this canonical isomorphism. Hence, $\mathsf{F}$ is indeed an endofunctor on $\mathsf{Vect}^{\calS}$.

For the monad structure on $\mathsf{F}$, we need to define the monadic multiplication $\mu: \mathsf{F}^2 \to \mathsf{F}$.
Informally, $\mathsf{F}^2 \sE [I;J]$ is ``the space of graphs whose vertices are decorated by $\bE$-decorated graphs'', so one can picture a monadic multiplication $\mu:\mathsf{F}^2\sE[I;J]\rightarrow \mathsf{F}\sE[I;J]$ defined by substituting all the ``decorating graphs'' into the appropriate vertices. This is possible as decorations ``commute'' with graph substitution by Lemma~\ref{lemma:decorations commute}. The next proposition makes this paragraph precise.

\begin{prop}\label{prop: monad multiplication}
	Graph substitution induces a natural transformation $\begin{tikzcd} \mu: \mathsf{F}^2 \arrow[r]& \mathsf{F}.\end{tikzcd}$ 
\end{prop}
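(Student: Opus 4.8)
The plan is to unwind the double colimit defining $\mathsf{F}^2$, identify each of its summands with the $\bE$-decoration of a substituted graph via Lemma~\ref{lemma:decorations commute}, and define $\mu$ on each summand as the corresponding colimit structure map into $\mathsf{F}\sE$. First I would compute $\mathsf{F}^2\sE[I;J]$ explicitly. By definition
\[
\mathsf{F}^2\sE[I;J] = \colim_{[G]\in\calG(I;J)} \bigotimes_{v\in V(G)} \mathsf{F}\sE[\tin(v);\tout(v)],
\]
and each factor is itself a colimit $\mathsf{F}\sE[\tin(v);\tout(v)] = \colim_{[H_v]\in\calG(\tin(v);\tout(v))} \bE[H_v]$. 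Since the tensor product over $\mathsf{Vect}$ preserves colimits in each variable, the inner tensor product rewrites as a colimit over tuples $\{[H_v]\}_{v\in V(G)}$ of the tensor product $\bigotimes_{v} \bE[H_v]$. By Lemma~\ref{lemma:decorations commute}, for each substitutable tuple this tensor product is canonically isomorphic to $\bE[G(\{H_v\})]$. Thus a typical summand of $\mathsf{F}^2\sE[I;J]$ is $\bE[G(\{H_v\})]$, indexed by a graph $[G]\in\calG(I;J)$ together with a substitutable graph $H_v$ at each vertex.

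Next I would define $\mu$. Since the boundary of $G(\{H_v\})$ is identified with $\partial G=(I;J)$, the substituted graph is an object of $\calG(I;J)$ and therefore carries a colimit structure map $\bE[G(\{H_v\})]\to\mathsf{F}\sE[I;J]$. I set $\mu$ on the summand indexed by $(G,\{H_v\})$ to be this structure map, precomposed with the isomorphism of Lemma~\ref{lemma:decorations commute}. By the universal property of the colimit $\mathsf{F}^2\sE[I;J]$, these summand-wise maps assemble into a single linear map $\mu_{\bE}\colon \mathsf{F}^2\sE[I;J]\to\mathsf{F}\sE[I;J]$, \emph{provided} they are compatible with the morphisms of the two nested indexing groupoids.

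The main obstacle, and the technical heart of the argument, is exactly this compatibility. A morphism in the outer category $\calG(I;J)$ is a relabelling/vertex-reordering isomorphism $G\to G'$, and morphisms in the inner categories give isomorphisms $H_v\to H'_{v'}$; I must check that together these induce an isomorphism $G(\{H_v\})\to G'(\{H'_{v'}\})$ in $\calG(I;J)$ under which the two structure maps agree. This follows from the functoriality of graph substitution -- that substitution respects graph isomorphisms, together with the associativity and unitality of \cite[Theorem 5.32, Lemma 5.31]{yj15} -- combined with the naturality clause of Lemma~\ref{lemma:decorations commute}; the canonical identification of vertex sets in~\eqref{eq:GraphSubst} guarantees that the tensor-factor-permuting isomorphisms on both sides match up.

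Finally I would verify that $\mu_{\bE}$ is a morphism of $\calS$-bimodules and natural in $\bE$. Equivariance holds because permuting the boundary labels of $G$ by a pair $(\sigma,\tau)\in S_I\times S_J$ -- implemented by substituting into a label-permuting corolla as in Figure~\ref{figure:graph relabelling} -- permutes the boundary labels of $G(\{H_v\})$ in the same way, so $\mu$ intertwines the $S_I\times S_J$-actions on source and target. Naturality in $\bE$ is immediate: a morphism $\bE\to\bE'$ of $\calS$-bimodules acts factorwise on decorations and commutes both with the canonical isomorphisms of Lemma~\ref{lemma:decorations commute} and with the colimit structure maps used to define $\mu$. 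This shows that $\mu\colon\mathsf{F}^2\to\mathsf{F}$ is a natural transformation of endofunctors on $\mathsf{Vect}^{\calS}$.
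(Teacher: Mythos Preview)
Your proposal is correct and follows essentially the same approach as the paper's proof: both unwind $(\mathsf{F}\bE)[G]$ via the commutation of tensor products with colimits and Lemma~\ref{lemma:decorations commute}, then use the colimit structure maps for $[G(\{H_v\})]\in\calG(I;J)$ to define $\mu$. The only cosmetic difference is that the paper packages the compatibility check you spell out informally by observing that graph substitution is a \emph{functor} $\mathbb{S}:\prod_{v}\calG(\tin(v);\tout(v))\to\calG(I;J)$, which then induces the map on colimits in one stroke; your version instead verifies the compatibility with morphisms of the indexing groupoids by hand and adds the explicit check of $\calS$-equivariance and naturality in $\bE$, which the paper leaves implicit.
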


\begin{proof}
	Note that, for each graph $G$ in $\calG(I;J)$, graph substitution (Definition~\ref{def: graph substitution}) describes a functor 
	\[\mathbb{S}: \prod\limits_{v\in V(G)}\calG(\tin(v);\tout(v)) \to \calG(I;J) \] 
	which sends a family of graphs $\{H_v\}_{v\in V(G)}$ to $G(\{H_v\})$ at the level of objects. 
	
	A morphism $\prod_{v \in V(G)} \varphi_v : \{H_v\} \to \{H'_v\}$ is a permutation of the vertex sets of each $H_v$. Via the canonical bijection $V(G(\{H_v\}))= \bigsqcup_{v\in V(G)} V(H_v)$, the disjoint union of the permutations $\varphi_v$ induces a vertex permutation on $V(G)$, which is, in turn, a morphism in $\calG(I;J)$. This defines $\mathbb S$ at the level of morphisms, and it is clear that $\mathbb S$ is a functor.

	By definition, $\mathsf{F}^2 \sE [I;J] = \colim_{[G]\in\calG(I;J)} (\mathsf{F}\bE)[G]$. To define a natural transformation $\mu:\mathsf{F}^2\rightarrow \mathsf{F}$, it is sufficient to define, for each $G\in\calG(I;J)$, a map $m: (\mathsf{F}\bE)[G]\rightarrow \mathsf{F}\sE[I;J]$. Then the universal property of the colimit gives rise to the map $\mu$:
	\begin{equation}\label{eq:monad composite}
	\begin{tikzcd}
	(\mathsf{F}\bE)[G]\arrow[r]\arrow[rr, bend left=25, "m"]& \mathsf{F}^2\sE[I;J]\arrow[r, "\mu"]& \mathsf{F}\sE[I;J]. 
	\end{tikzcd} 
	\end{equation}  
	
	To define the map $m$, observe the following:
	\begin{align}\label{eq: equalities}(\mathsf{F}\bE)[G] &=  \bigotimes_{v\in V(G)} (\mathsf{F}\sE)[\tin(v);\tout(v)]= 
	\bigotimes_{v\in V(G)} \,\, \colim_{[H_v]\in\calG(\tin(v);\tout(v))}  \bE[H_v]  
	\\&\cong \colim\limits_{\calG(v)}\,\, \bigotimes_{v\in V(G)} \bE[H_v]\simeq  \colim\limits_{\calG(v)} \bE[G(\{H_v\})],
	\end{align} 
	where we have written $\calG(v) =\prod\limits_{v\in V(G)}\calG(\tin(v);\tout(v))$ to shorten notation. The equalities in \eqref{eq: equalities} follow from the definition of the functor $\mathsf{F}$. The first isomorphism is the fact that tensor products commute with colimits in $\mathsf{Vect}$, and the second isomorphism is by Lemma~\ref{lemma:decorations commute}. 
	
	Since $[G(\{H_v\})]\in\calG(I;J)$, the graph substitution functor $\mathbb{S}$ induces a map $\tilde{\mathbb{S}}$, which composes with the isomorphism above to define $m$:  \[\begin{tikzcd}(\mathsf{F}\bE)[G]\arrow[r,"\cong"] \arrow[rrr,bend left=15,"m"] &\colim_{\{H_v\}\in\calG(v)}\bE[G(\{H_v\})]\arrow[r, "\tilde{\mathbb{S}}"]&\colim\limits_{[K]\in \calG(I;J)}\bE[K]\arrow[r,"\cong"]&\mathsf{F} \sE[I;J].\end{tikzcd}\] 
\end{proof}

To define the monad unit, note that if $C_{(I;J)}$ is the corolla whose vertex labels agree with its boundary labels $(I;J)$ then $\bE[C_{(I;J)}]=\sE[I;J]$. 
\begin{definition}\label{def: monad unit}The map $$\begin{tikzcd}\bE[C_{(I;J)}]\arrow[r]&\colim_{\calG(I;J)}\bE[G]\end{tikzcd}$$ defines a natural transformation $$\begin{tikzcd}\eta_{\bE}: \{\sE[I;J]\}\arrow[r]& \{\mathsf{F}\sE[I;J]\}.\end{tikzcd}$$ 
\end{definition}

The following proposition is used to define wheeled props in \cite[Section 2]{mms}:
\begin{prop}\label{def:monad} The endofunctor $\mathsf{F}$ together with the natural transformations $\mu: \mathsf{F}^2 \rightarrow \mathsf{F}$ and  $\eta: \Id_{\mathsf{Vect}^{\calS}} \rightarrow \mathsf{F}$ is a monad on the category $\mathsf{Vect}^{\calS}$.
\end{prop}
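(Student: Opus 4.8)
The plan is to verify the three monad axioms from the definition given in equations (\ref{monad ass}) and (\ref{monad id}): associativity of $\mu$, and the left and right unit laws relating $\mu$ and $\eta$. Since $\mathsf{F}$ is built from colimits over the graph categories $\calG(I;J)$, and both $\mu$ and $\eta$ have already been produced as natural transformations (Proposition~\ref{prop: monad multiplication} and Definition~\ref{def: monad unit}), the entire verification should reduce to checking that the structure maps agree on each summand $\bE[G]$, and then invoking the universal property of the colimit to conclude the maps agree globally. This is the standard strategy for monads arising from a substitution operation on a graph-like species.

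First I would address \emph{associativity}. The content here is that substituting graphs into graphs, and then substituting further graphs into those, can be performed in either grouping and yields the same result. At the level of summands, starting from a triply-iterated decoration $(\mathsf{F}^2\bE)[G]$ (morally: a graph $G$ whose vertices are decorated by $\bE$-decorated graphs, whose vertices are in turn decorated by $\bE$-decorated graphs), both composites $\mu\circ\mathsf{F}\mu$ and $\mu\circ\mu\mathsf{F}$ carry this to $\bE\bigl[G(\{H_v(\{K_w\})\})\bigr]$. The key input is the \emph{associativity of graph substitution} itself, namely Theorem~5.32 of~\cite{yj15}, together with Lemma~\ref{lemma:decorations commute} to commute decorations past the iterated substitution. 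The canonical identification of vertex sets in~(\ref{eq:GraphSubst}) guarantees that the two groupings induce the \emph{same} total graph, so the induced maps into $\colim_{[K]\in\calG(I;J)}\bE[K]$ coincide; naturality of everything in sight, plus the universal property, then upgrades this summand-wise equality to equality of the natural transformations.

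For the \emph{unit axioms}, I would use that $\eta$ inserts the corolla $C_{(I;J)}$, which is precisely the two-sided unit for graph substitution (Lemma~5.31 of~\cite{yj15}): substituting a corolla into a vertex $v$, or substituting a graph $G$ into a single-vertex corolla, returns $G$ unchanged (up to the canonical identification). Concretely, $\mathsf{F}\eta$ decorates each vertex of $G$ by its own corolla and $\eta\mathsf{F}$ views $G$ as decorating a single corolla; applying $\mu$ and invoking unitality of substitution recovers the identity on $\mathsf{F}\sE[I;J]$. Again this is checked on each summand $\bE[G]$ and extended by the colimit's universal property.

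The \textbf{main obstacle} will be bookkeeping the \emph{vertex orderings} and the resulting tensor-factor permutations. Decorated graphs are ordered tensor products $\bigotimes_{v\in V(G)}\sE[\tin(v);\tout(v)]$, and the various substitution orderings prescribed after Definition~\ref{def: graph substitution} (``first the ordering of $V(G)$ before $v$, then $V(H_v)$, then $V(G)$ after $v$'') must be shown to match under the two associativity groupings; discrepancies would manifest as symmetric-group elements acting on tensor factors. The resolution is that Lemma~\ref{lem:DecorationVertexLabels} makes $\bE[G]$ independent of the graph's isomorphism class and the morphisms of $\calG(I;J)$ include exactly the vertex-reordering isomorphisms, so any such permutation is already identified within the colimit. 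I would therefore be careful to state that the summand-wise maps agree \emph{as maps into the colimit}, where these reordering isomorphisms are quotiented out, rather than claiming a strict on-the-nose equality before passing to the colimit. Beyond this, the argument is a routine diagram chase, so I would keep the exposition brief and defer the substitution identities to~\cite{yj15}.
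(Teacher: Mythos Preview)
Your proposal is correct and follows the same approach as the paper, which gives a one-sentence proof deferring associativity and unitality of $\mu$ and $\eta$ to the associativity and unitality of graph substitution (citing the same results from \cite{yj15}). Your version is simply a more detailed unpacking of that sentence, including the careful handling of vertex orderings via the colimit, which the paper leaves implicit.
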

\begin{proof}
	The natural transformations $\mu$ (Definition~\ref{prop: monad multiplication}) and $\eta$ (Definition~\ref{def: monad unit}) are associative and unital since graph substitution is associative and unital.  
\end{proof}

\begin{definition}\label{def:WP}
	A linear \emph{wheeled prop} $\bE$ is an algebra over the monad $\mathsf{F}$ in the category of $\mathcal{S}$-bimodules $\mathsf{Vect}^{\calS}$.  The category of linear wheeled props is the category of $\mathsf{F}$-algebras and is denoted by $\mathsf{wProp}$.
\end{definition} 

In other words, a wheeled prop is an $\mathcal{S}$-bimodule $\bE$ together with an action $\gamma: \mathsf{F}\bE \to \bE$.  Note that, given an $\mathcal S$-bimodule $\bE$, the \emph{free wheeled prop generated by} $\bE$ is the $\mathcal S$-bimodule $\mathsf{F}\bE$ with structure map the monadic multiplication $\mu: \mathsf{F}^2\bE \to \mathsf{F}\bE$. We give examples and an alternative description of wheeled props in Section~\ref{section: categorical description}. 

\begin{remark}
	The reader may have noticed that circuit algebras are described as algebras over an operad and wheeled props are described as algebras over a monad. In general, one can associate, to any operad $\mathsf{O}$, a monad $M_{\mathsf{O}}$ with the property that $\mathsf{O}$-algebras are $M_{\mathsf{O}}$-algebras. It is not the case, however, that all monads come from operads. For full details see \cite[Appendix C]{MR2094071}. 
\end{remark}


\section{Equivalence}\label{sec:Eq}

In this section we prove that there is an equivalence of categories between the category of circuit algebras and the category of linear wheeled props. The key observation is that wiring diagrams (Definition~\ref{def:WD}) are in bijection with oriented graphs (Definition~\ref{def: oriented graph}), and under this bijection, wiring digram composition corresponds with graph substitution. This correspondence leads to the equivalence of categories proven in Theorem~\ref{thm:main}.

\subsection{Graphs and Wiring Diagrams} \label{sec:graphs and WD}
The goal of this subsection is to define a (structure respecting) correspondence $$\Phi: \{\text{iso.\ classes of oriented labelled graphs}\} \to \{\text{wiring diagrams}\}.$$
The idea behind $\Phi$ is that the vertices of graphs can be viewed as input circles of wiring diagrams, as illustrated in Figure~\ref{figure:graphtoWD}. The technical details take more work.

\begin{figure}[h]
	\begin{centering}
		\includegraphics[height=4.5cm]{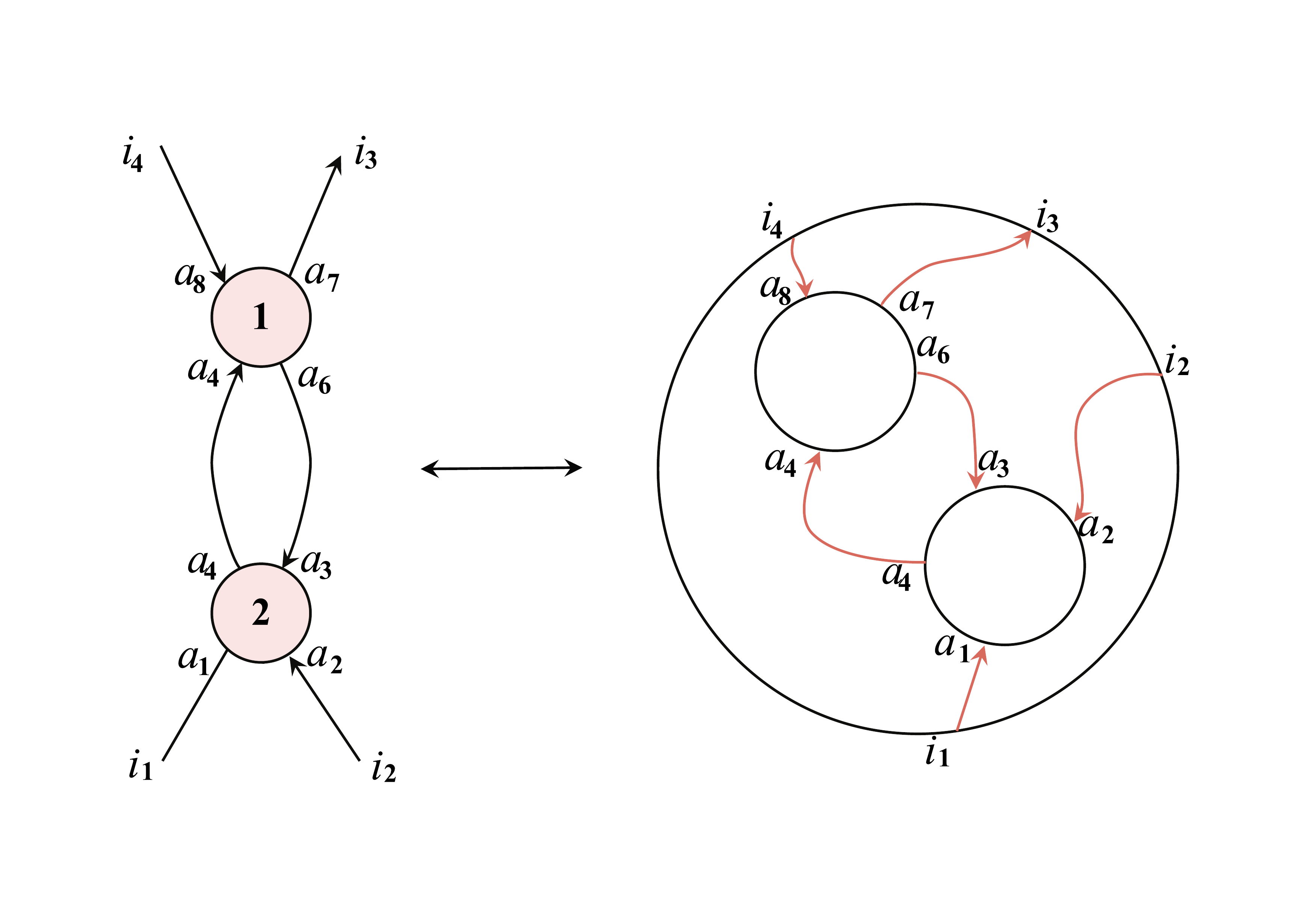}
	\end{centering}
	\caption{The correspondence between graphs and wiring diagrams.}\label{figure:graphtoWD}
\end{figure}

Recall that the data of an oriented graph $G$ consists of a set of flags $\calF(G)$, an ordered partition $\calF(G)= (\sqcup_{\alpha \in V(G)} v_{\alpha}) \sqcup v_{\epsilon}$, an involution $\iota$, a fixed point free involution $\pi$ on the $\iota$-fixed points of $v_{\epsilon}$, an orientation map $\calF_{0} \to \{-1,1\}$, and labelling functions $\lambda: \coprod_{{\alpha}\in V(G)} v_{\alpha} \to \calI$ on the vertices, and $\beta: \partial G \to \calI$ on the boundary.

By Proposition~\ref{rmk:Matching}, a wiring diagram is a triple $D=(\calA, p,l)$ where $\calA$ is the set of finite sets of labels $\{A_{0}^{\tin/\tout},...,A_{r}^{\tin/\tout}\}$,  $p$ is a perfect matching (bijection) between the finite sets 
$\bigsqcup_{i=0}^r 
A^{\text{out}}_i$ and $\bigsqcup_{i=0}^r A^{\text{in}}_i$, and $l\in \Z_{\geq 0}$ is a non-negative integer (the number of circles). 


\begin{cons}\label{map:Phi}
	We construct a correspondence $\Phi$  which assigns to an isomorphism class of oriented labelled graphs $[G]$ -- represented by a graph $G$ -- a wiring diagram $D_{[G]}=(\calA, p, l)$. 
	\begin{enumerate}
		\item We define the number $l= \frac{1}{2} \#\{\varphi \in v_{\epsilon}: \iota(\varphi)\neq \varphi\}$, where the $\#$ sign denotes the cardinality of the set that follows it. Simply put, $l$ is the number of free loops in $G$. 
		
		\item The vertices $V(G)=\{1,2,...,r\}$ give rise to $\calA=\{A^{\tin/\tout}_{0}, A^{\tin/\tout}_{1},...,A^{\tin/\tout}_{r}\}$ by setting  
		\begin{itemize}
			\item[-] $\tin(G)=A_0^\tout, \; \tout(G)=A_0^\tin.$ \hspace{1mm}(Note: the switch of in/out here is intentional and is due to opposite conventions.)
			\item[-] $ \tin(v_i)=A_i^\tin,\quad \tout(v_i)=A_i^\tout \quad \text{for}\quad i=1,...,r.$
		\end{itemize}
		
		\item The bijection $p$ is built as follows:
		\begin{itemize}
			\item[-] For each label $a \in A_{i}^{\tout/\tin}$, $1\leq i \leq r$,
			there is a unique negative/positive (respectively) flag $\lambda^{-1}(a) \in v_{i}$. If $\lambda^{-1}(a)$ is not fixed by $\iota$, and $\iota\lambda^{-1}(a) \in v_j$ then $p(a):=\lambda \iota\lambda^{-1}(a)\in A_j^{\tin/\tout}$. If $\iota(\lambda^{-1}(a))=\lambda^{-1}(a)$ then $p(a):=\beta \lambda^{-1}(a)\in A_{0}^{\tin/\tout}$. 
			\item[-] For each label $a \in A_{0}^{\tout/\tin}$, there is a unique positive, respectively negative (once again in/out conventions are opposite here) flag $\beta^{-1}(a) \in \partial G$. If $\beta^{-1}(a) \in v_i$ for some $i$, then define $p(a):= \lambda \beta^{-1}(a) \in A_i^{\tin/ \tout}$. If $\beta^{-1}(a) \in v_\epsilon$, then define $p(a):=\beta \pi \beta^{-1} (a) \in A_0^{\tin / \tout}$.
		\end{itemize}
	\end{enumerate}
	
\end{cons}
Since labelled graph isomorphisms preserve the ordered partitions, involutions, orientation and labellings (Definition~\ref{def:graph isomorphism}), the wiring diagram $D_{[G]}$ does not depend on the representative of the isomorphism class $[G]$. Hence, $\Phi$ is well-defined.

Next, we construct an inverse map $$\Psi: \{\text{Wiring diagrams}\} \to \{\text{Iso.\ classes of oriented labelled graphs}\},$$ 
which, intuitively, turns input circles into vertices and removes the output circle. 

\begin{cons}\label{map:Psi}
	Given a wiring diagram  $D=(\{A^{\tout/\tin}_{0},...,A^{\tout/\tin}_{r}\},p,l)$,  we define an isomorphism class of graphs $\Psi(D)=[G_{D}]$ as follows.
	\begin{enumerate}
		\item The set of flags \begin{equation*}\begin{split}\calF(G_D)& :=\{(a, i,\tout/\tin): a \in A^{\tout/\tin}_{i} \text{ for } 1\leq i \leq r\} \quad
		\cup   \\& \{(a,0,\tout/\tin): a\in A^{\tout/\tin}_{0} \text{ and } p(a) \in A^{\tin/\tout}_{0}\}  \cup  \{c_{j}\}_{j=1}^{2l}.\end{split}\end{equation*} Here, ``$(a,i,\tout/\tin)$'' stands for ``$(a,i, \tout)$ and $(a,i,\tin)$''.
		\item The flags $\calF(G)$ are partitioned into vertices $\bigcup_{i=1}^{r} v_i\cup v_{\epsilon}$ with $$v_{i}:=\{(a,i,\tout/\tin): a \in A^{\tout/\tin}_{i}\} \text{ for } 1\leq i \leq r,$$ and $$v_{\epsilon} = \{(a,0,\tout/\tin): a\in A^{\tout/\tin}_{0}, \text{ and } p(a) \in A^{\tin/\tout}_{0}\} \cup \{c_{j}\}_{j=1}^{2l}.$$
		\item The involutions $\iota$ and $\pi$ are set as follows:
		\begin{itemize} 
			\item[-] On the flags in the vertices $v_i$, $i=1,...,r$,  
			\[\iota(a,i, \tout/\tin)=\begin{cases} ( p(a),j, \tin/\tout), \text{ where } p(a) \in A_{j}^{\tin/\tout}, j\neq 0\\(a,i, \tout/\tin), \text{ where } p(a) \in A_{0}^{\tin/\tout}.\end{cases} \] This defines the internal edges of the graph $G_D$, and the boundary edges connected to a vertex. 
			\item[-]If $a\in A^{\tout/\tin}_{0}$ such that $p(a) \in A^{\tin/\tout}_{0}$, then $(a,0,\tout/\tin)$ is an $\iota$-fixed point, and $\pi(a,0,\tout/\tin)=(p(a), 0, \tin/\tout)$. This gives free-floating edges in the graph $G_D$. 
			\item[-] It remains to describe the free floating loops on $G_D$. For $c_j$, where $j$ is odd, $\iota(c_j)=c_{j+1}$ and where $j$ is even, $\iota(c_j)=c_{j-1}$. 
		\end{itemize} 
		\item The labelling functions $\lambda$ and $\beta$ are as follows. For each $(a,i, \tout/\tin) \in v_i \quad (i=1,...,r)$, we set $\lambda(a,i,\tout/\tin)=a$. 
		Furthermore, if $(a,i,\tout/\tin)\in v_i$ is an $\iota$-fixed point, then set $\beta(a,i,\tout/\tin)=p(a)$. For the $\iota$-fixed points of type $(a,0,\tout/\tin)$, define $\beta(a,0,\tout/\tin)=a$.
		
		\item The direction function is defined by $\delta(a,i, \tin)=1$ and $\delta(a,i,\tout)=-1$ where $i=1,...,r$; while $\delta(a,0, \tin)=-1$ and $\delta(a,0,\tout)=1$. Keep in mind that free floating loop flags don't have signs -- in technical notation, $\calF_0=\calF \setminus \{c_j\}_{j=1}^{2l}$.
		
	\end{enumerate}
\end{cons} 

\begin{lemma}\label{lemma: graphs are wiring diagrams}
	The maps $\Phi$ and $\Psi$ are inverse maps of sets, therefore set bijections.
\end{lemma}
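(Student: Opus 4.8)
The plan is to show that the two set maps $\Phi$ and $\Psi$ constructed above are mutually inverse, by verifying both composites equal the identity on the respective sets. Since both $\Phi$ and $\Psi$ are defined by explicit formulas on the underlying combinatorial data (flags, partitions, involutions, labels on one side; label sets, the perfect matching $p$, and the loop count $l$ on the other), this is fundamentally a bookkeeping verification. I would organize it as two separate checks: first that $\Phi \circ \Psi = \id$ on wiring diagrams, then that $\Psi \circ \Phi = \id$ on isomorphism classes of oriented labelled graphs.

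First I would check $\Phi \circ \Psi = \id$. Starting from a wiring diagram $D = (\calA, p, l)$, I would apply $\Psi$ to get a graph $G_D$ and then apply $\Phi$ to recover a wiring diagram $D_{[G_D]} = (\calA', p', l')$, and verify $(\calA', p', l') = (\calA, p, l)$ componentwise. The label sets match by construction, since Construction~\ref{map:Psi}(2) builds $v_i$ from $A_i^{\tout/\tin}$ and Construction~\ref{map:Phi}(2) reads them back off via $\tin(v_i) = A_i^\tin$, $\tout(v_i) = A_i^\tout$. The loop count matches because $\Psi$ introduces exactly $2l$ flags $\{c_j\}$ paired by $\iota$ into $l$ loops, and $\Phi$'s formula $l' = \frac{1}{2}\#\{\varphi \in v_\epsilon : \iota(\varphi) \neq \varphi\}$ counts precisely these. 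The matching $p'$ requires tracing through the four cases of Construction~\ref{map:Phi}(3) against the definition of $\iota$, $\pi$, $\lambda$, $\beta$ in Construction~\ref{map:Psi}: for a label $a \in A_i^{\tout/\tin}$ with $1 \le i \le r$, one follows $\lambda^{-1}$, then $\iota$ (which was set to send $(a,i,\cdot)$ to $(p(a), j, \cdot)$ when $p(a) \in A_j$ with $j \neq 0$, or fixes it when $p(a) \in A_0$), then reads off via $\lambda$ or $\beta$, recovering $p(a)$ in each case. The boundary cases $a \in A_0^{\tout/\tin}$ are handled symmetrically, using that $\beta^{-1}$ and $\pi$ were defined to undo exactly what $\Phi$ applies.

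For the reverse composite $\Psi \circ \Phi = \id$, I would start from a graph $G$, form the wiring diagram $D_{[G]} = \Phi([G])$, and check that $\Psi(D_{[G]})$ is isomorphic to $G$ as an oriented labelled graph (in the strict sense of Definition~\ref{def:graph isomorphism}). The natural candidate isomorphism sends each flag $\varphi \in v_i$ of $G$ to the flag $(\lambda(\varphi), i, \tout/\tin)$ of $\Psi(\Phi([G]))$, according to the sign $\delta(\varphi)$, and sends the $2l$ loop-flags of $v_\epsilon$ bijectively to the $\{c_j\}$. I would then confirm this bijection preserves the ordered partition (vertex-by-vertex, since indices are preserved), the orientation function (by the sign conventions in Construction~\ref{map:Psi}(5)), and both involutions $\iota$ and $\pi$ and the labellings $\lambda, \beta$ — each of which follows by unwinding one definition into the other. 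Free-floating edges (the $\iota$-fixed flags of $v_\epsilon$ paired by $\pi$) and free-floating loops must be tracked separately, as they are the most error-prone part of the correspondence.

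The main obstacle I expect is precisely this bookkeeping around the exceptional cell $v_\epsilon$, together with the deliberate in/out convention switch flagged in Construction~\ref{map:Phi}(2) (``$\tin(G) = A_0^\tout$''). Because the boundary labelling uses opposite orientation conventions on the output circle versus the input circles, one must be vigilant that every occurrence of $\tout/\tin$ versus $\tin/\tout$ in the two constructions lines up, and that flags which are $\iota$-fixed in $G$ (boundary half-edges) are correctly routed to $A_0$ in $\Phi$ and back again in $\Psi$. The free-floating loops carry no orientation and no labels, so they require a separate, purely numerical matching via the count $l$ rather than via $p$; keeping these three classes of flags — vertex-incident edges, free-floating edges, and free-floating loops — cleanly separated throughout both composites is the crux of the argument, but none of the individual verifications is conceptually difficult once the conventions are pinned down.
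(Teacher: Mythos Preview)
Your proposal is correct and follows exactly the same approach as the paper's proof: verify $\Phi\circ\Psi=\id$ on wiring diagrams directly, and for $\Psi\circ\Phi$ observe that the composite merely renames the flags of $G$ while preserving partitions, involutions, orientations and labellings, hence returns the same isomorphism class. The paper's own proof is a two-sentence ``straightforward to check'' remark making precisely these two points; your plan is simply a more detailed unpacking of that same bookkeeping, with nothing conceptually different.
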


\begin{proof}
	It is straightforward to check that $\Phi\circ \Psi$ is the identity map on wiring diagrams. For a graph $G$, the composition $\Psi(\Phi(G))$ renames the flags of G, but retains the labelling, therefore it does not change the isomorphism class of $G$.
\end{proof}

\begin{lemma}\label{lem:CompSub}
	The map $\Psi$ translates wiring diagram composition to graph substitution:
	$$\Psi(D_1 \circ_i D_2)= \Psi(D_1)\big(\Psi(D_2)_i\big).$$ 
\end{lemma}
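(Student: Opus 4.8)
The plan is to compare the two oriented labelled graphs produced by each side directly, matching flags, the ordered vertex partition, the involutions $\iota$ and $\pi$, the labellings $\lambda, \beta$, and the orientation $\delta$, and then to observe that the two graphs differ only by a renaming of flags, hence represent the same isomorphism class. The first step is to verify that the right-hand side is even defined, i.e.\ that $\Psi(D_2)$ is substitutable into the $i$th vertex $v_i$ of $\Psi(D_1)$. Writing $D_1=(\calA,p_1,l_1)$ and $D_2=(\calB,p_2,l_2)$, the composability condition of Definition~\ref{def:WiringComp} reads $A_i^\tout=B_0^\tin$ and $A_i^\tin=B_0^\tout$. By Construction~\ref{map:Phi}, $\text{nbh}(v_i)=(\tin(v_i);\tout(v_i))=(A_i^\tin;A_i^\tout)$, while the in/out switch in the construction gives $\partial\Psi(D_2)=(\tin(\Psi(D_2));\tout(\Psi(D_2)))=(B_0^\tout;B_0^\tin)$. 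The composability condition then yields $\partial\Psi(D_2)=\text{nbh}(v_i)$, so the substitution $\Psi(D_1)(\Psi(D_2)_i)$ of Definition~\ref{def: graph substitution} is defined.

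Next I would match the combinatorial data. At the level of vertices (equivalently, of non-exceptional label sets), the composite $\calA\circ_i\calB$ of Definition~\ref{def:WiringComp} replaces $A_i$ by $B_1,\ldots,B_s$ in the prescribed order; under $\Psi$ this is exactly the vertex set $(V(\Psi(D_1))\setminus\{v_i\})\sqcup V(\Psi(D_2))$ of the substitution, and the vertex ordering of Definition~\ref{def: graph substitution} agrees with the input-disc ordering of Definition~\ref{def:WiringComp}(1). For the involution $\iota$ there are three cases: edges internal to $D_1$ and not meeting the $i$th circle, and edges internal to $D_2$ not meeting $B_0$, are carried over unchanged; and the edges produced by the gluing map $\varphi$, which identifies $f^{-1}(A_i^{\tout/\tin})$ with $g^{-1}(B_0^{\tin/\tout})$, correspond precisely to the edges created by graph substitution when the legs of $\Psi(D_2)$ are identified with the flags of $v_i$ of the same label and orientation. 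Unwinding the involution formulas of Construction~\ref{map:Psi} for the composite matching $p$ (built from $p_1$, $p_2$ and the identification $\varphi$) reproduces exactly these pairings, and the labellings $\lambda,\beta$ and orientation $\delta$ agree flag by flag by construction.

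The main obstacle is the bookkeeping of free-floating loops, since both wiring-diagram composition (Definition~\ref{def:WiringComp}, which ``may create closed components'') and graph substitution (Figure~\ref{fig:graphsubloop}) can create them. I would argue that the identification data is literally the same on both sides, so that the connected components of the glued $1$-manifold $M\sqcup_\varphi N$ correspond bijectively to the connected components of the substituted graph, with a closed component (a circle with no boundary, resp.\ a vertex-free cycle) on one side matching one on the other. Concretely, a new loop arises exactly from a maximal chain of edges supplied alternately by $p_1$ and $p_2$ that closes up without passing through a surviving vertex, so that the integer $l$ of Construction~\ref{map:Phi}(1) for the composite equals $l_1+l_2$ plus this number of new cycles, which is precisely the number of free loops of $\Psi(D_1)(\Psi(D_2)_i)$.

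Finally, having matched all of the defining data up to the renaming of flag-triples $(a,j,\tout/\tin)$ forced by the index shift in $\calA\circ_i\calB$, I would conclude that representatives of the two sides are isomorphic as oriented labelled graphs, and hence equal as isomorphism classes; since $\Psi$ takes values in isomorphism classes, this proves the claim. A cleaner alternative I would also consider is to use $\Phi=\Psi^{-1}$ (Lemma~\ref{lemma: graphs are wiring diagrams}) to reduce the statement to the identity $\Phi\big(\Psi(D_1)(\Psi(D_2)_i)\big)=D_1\circ_i D_2$, since $\Phi$ reads the perfect matching and the loop count $l$ directly off the substituted graph, which makes the loop-counting step more mechanical.
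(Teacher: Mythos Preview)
Your proposal is correct and follows exactly the approach the paper intends: the paper's own proof consists of the single sentence ``A straightforward verification,'' and what you have written is precisely that verification spelled out in detail. Your careful tracking of the substitutability condition, the vertex ordering, the three cases for $\iota$, and especially the free-loop bookkeeping is all sound and more explicit than anything the paper provides.
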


\begin{proof}
	A straightforward verification.
\end{proof}

A concise way to summarize the above is that the operad structure of wiring diagrams induces the monad structure on graphs, via the map $\Phi$. Formally, this implies that they admit the same  algebras, a statement we unpack in the final proof below.

\subsection{Equivalence of Categories}
We are now ready to prove the main result of this paper:

\begin{thm}\label{thm:main}
	There is an equivalence of categories \[\begin{tikzcd}\mathsf{CA}\arrow[r, "\widetilde\Phi", shift left=.5ex]& \arrow[l, "\widetilde\Psi", shift left=.5ex]\mathsf{wProp}\end{tikzcd}\] between the category of circuit algebras and the category of linear wheeled props. 
\end{thm}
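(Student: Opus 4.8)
The plan is to leverage the machinery built in Section~\ref{sec:graphs and WD} and transport it to the level of algebras. The fundamental ingredients are already in place: Lemma~\ref{lemma: graphs are wiring diagrams} gives a bijection between isomorphism classes of oriented graphs and wiring diagrams via $\Phi$ and $\Psi$, and Lemma~\ref{lem:CompSub} shows this bijection intertwines wiring diagram composition with graph substitution. As the authors remark just before the theorem, this amounts to saying that the coloured operad of wiring diagrams and the monad $\mathsf{F}$ of graph substitution ``have the same algebras.'' My strategy is to make this slogan precise by constructing the functors $\widetilde\Phi$ and $\widetilde\Psi$ explicitly and checking they are mutually inverse (or at least mutually inverse up to natural isomorphism).

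First I would fix the translation of the underlying data. A circuit algebra $\V$ is a collection $\{\V[S^\tout;S^\tin]\}$ of vector spaces indexed by pairs of label sets, with an $\mathcal{S}$-action as in Example~\ref{example:relabellingWD}; a wheeled prop is an $\mathcal{S}$-bimodule $\bE=\{\sE[I;J]\}$ with an $\mathsf{F}$-action. The first step is to identify the underlying indexed objects: given $\V$, set $\sE[I;J] := \V[J;I]$, matching the in/out convention switch recorded in Construction~\ref{map:Phi}(2), and observe that the left/right symmetric group actions on the wheeled prop side correspond exactly to the label-permuting wiring diagram actions of Example~\ref{example:relabellingWD}. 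This shows the underlying $\mathcal{S}$-bimodule category and the indexing category of circuit algebras agree. Next I would show the structure maps correspond. An $\mathsf{F}$-algebra structure is a map $\gamma:\mathsf{F}\sE\to\sE$, i.e.\ for each graph $G$ a map $\bE[G]=\bigotimes_{v} \sE[\tin(v);\tout(v)] \to \sE[\tin(G);\tout(G)]$, compatible under the colimit. Under $\Phi$, the graph $G$ becomes a wiring diagram $D_{[G]}$, and $\bE[G]$ becomes precisely the source $\V[A_1^\tout;A_1^\tin]\otimes\cdots\otimes\V[A_r^\tout;A_r^\tin]$ of the circuit algebra map $F_{D_{[G]}}$, with target $\V[A_0^\tin;A_0^\tout]$. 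So I would define $\widetilde\Phi(\V)$ to have structure map assembled from the $F_D$, and $\widetilde\Psi(\bE)$ to have wiring diagram action assembled from $\gamma$.

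The substance of the proof is verifying that the two sets of axioms match under this dictionary. The associativity axiom (\ref{alg over monad}) for an $\mathsf{F}$-algebra is exactly the operadic composition axiom (1) of Definition~\ref{def:CA}: Lemma~\ref{lem:CompSub} guarantees $\Psi(D_1\circ_i D_2)=\Psi(D_1)(\Psi(D_2)_i)$, which is precisely the compatibility of $\gamma$ with the monadic multiplication $\mu$ built from graph substitution in Proposition~\ref{prop: monad multiplication}. Likewise the unit axiom (\ref{alg id}) corresponds to the identity wiring diagrams of Example~\ref{ex:IdAndPerm} matching the corolla unit $\eta$ of Definition~\ref{def: monad unit}, and the equivariance axiom (2) of Definition~\ref{def:CA} corresponds to the $\mathcal{S}$-bimodule equivariance encoded in the action of label-permuting corollas on $\mathsf{F}\sE$. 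I would then check that a morphism of circuit algebras (commuting with all $F_D$) is the same as a morphism of $\mathsf{F}$-algebras (diagram (\ref{category of alg})), so $\widetilde\Phi$ and $\widetilde\Psi$ are functorial, and that $\widetilde\Phi\widetilde\Psi$ and $\widetilde\Psi\widetilde\Phi$ are the identity (or naturally isomorphic to it) as a consequence of Lemma~\ref{lemma: graphs are wiring diagrams}.

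I expect the main obstacle to be bookkeeping rather than conceptual: carefully tracking the in/out convention reversal between vertices and boundaries (Construction~\ref{map:Phi}(2)) through every instance, and handling the free-floating loops correctly. The loop count $l$ in a wiring diagram has no vertex-decoration on the wheeled prop side (an empty decorated graph contributes $\Bbbk$, per Definition~\ref{def: decoration}), so I must confirm that closed circles created by composition are treated consistently on both sides---this is exactly where the colimit over $\calG(I;J)$ and the structure of $v_\epsilon$ do the work. A clean way to sidestep most of the axiom-chasing is to invoke the formal principle: since $\Phi,\Psi$ give an isomorphism of the operad of wiring diagrams with the monad $\mathsf{F}$ as composition structures (Lemmas~\ref{lemma: graphs are wiring diagrams} and~\ref{lem:CompSub}), the categories of algebras are canonically isomorphic. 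I would state this as the conclusion, using the explicit functors above as the witnessing equivalence.
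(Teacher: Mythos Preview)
Your proposal is correct and follows essentially the same route as the paper: transport the structure maps through the bijection $\Phi,\Psi$ of Lemmas~\ref{lemma: graphs are wiring diagrams} and~\ref{lem:CompSub}, verify that the circuit-algebra axioms of Definition~\ref{def:CA} correspond to the $\mathsf{F}$-algebra axioms (\ref{alg over monad})--(\ref{category of alg}), and conclude the equivalence. Your explicit bookkeeping of the in/out convention swap $\sE[I;J]:=\V[J;I]$ is a useful clarification that the paper's proof leaves implicit.
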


\begin{lemma}\label{lem:CAPerm}
	Every circuit algebra has an underlying $\mathcal{S}$-bimodule. 
\end{lemma}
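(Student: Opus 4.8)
The plan is to extract, from the circuit algebra structure on $\V$, the two commuting symmetric group actions required by the definition of an $\mathcal{S}$-bimodule. Recall that an $\mathcal{S}$-bimodule assigns to each pair of finite subsets $I, J \subseteq \calI$ a vector space $\sE[I;J]$ equipped with commuting left $S_I$ and right $S_J$ actions. A circuit algebra $\V$ already supplies a family of vector spaces $\{\V[S^{\tout};S^{\tin}]\}$ indexed by pairs of label sets, so the underlying collection of vector spaces is immediate; the content of the lemma is exhibiting the symmetric group actions and checking they commute.

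\textbf{First} I would invoke the label-permuting wiring diagrams of Example~\ref{example:relabellingWD}. For a fixed pair $(S,T)$ and permutations $\sigma \in \calS_S$, $\tau \in \calS_T$, the diagram $D_{\sigma,\tau}$ induces a linear endomorphism $F_{D_{\sigma,\tau}}$ of $\V[S;T]$. As noted there, the maps $F_{D_{\sigma,\id}}$ define a left $\calS_S$-action and the maps $F_{D_{\id,\tau^{-1}}}$ define a commuting right $\calS_T$-action on $\V[S;T]$. The main step is therefore just to verify that these are genuine group actions and that they commute: this follows from axiom~(1) of Definition~\ref{def:CA}, since composing label-permuting diagrams $D_{\sigma,\id}\circ_1 D_{\sigma',\id} = D_{\sigma\sigma',\id}$ corresponds to composing the associated linear maps, giving $F_{D_{\sigma,\id}}\circ F_{D_{\sigma',\id}} = F_{D_{\sigma\sigma',\id}}$. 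The identity permutations recover the identity wiring diagrams of Example~\ref{ex:IdAndPerm}, whose associated maps are the identity, so the unit axiom holds. Commutativity of the left and right actions follows because $D_{\sigma,\id}\circ_1 D_{\id,\tau^{-1}} = D_{\sigma,\tau^{-1}} = D_{\id,\tau^{-1}}\circ_1 D_{\sigma,\id}$ as wiring diagrams (the permutations $\sigma$ and $\tau$ act on disjoint label sets), again translated through axiom~(1).

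\textbf{Finally}, to make this assignment functorial on the level of $\calS$-bimodules, I would observe that a morphism of circuit algebras $\Phi:\V\to\W$ commutes with the action of all wiring diagrams, in particular with the label-permuting diagrams $D_{\sigma,\tau}$; hence each component $\Phi_{S;T}$ is $S_S\times S_T$-equivariant and $\Phi$ is a morphism of $\calS$-bimodules. This yields a forgetful functor $\mathsf{CA}\to\mathsf{Vect}^{\calS}$. I expect the only mild subtlety, rather than a genuine obstacle, to be bookkeeping around the in/out label conventions: the circuit algebra indexes $\V[S^{\tout};S^{\tin}]$ with the output set first, whereas an $\calS$-bimodule $\sE[I;J]$ takes the incoming set first, so one must fix the identification $\sE[I;J]:=\V[J;I]$ (or its opposite) consistently, matching the in/out switch already flagged in Construction~\ref{map:Phi}(2). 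Once this convention is pinned down, the verification is routine and the lemma follows.
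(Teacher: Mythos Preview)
Your proposal is correct and follows precisely the paper's approach: the symmetric group actions are obtained from the label-permuting wiring diagrams of Example~\ref{example:relabellingWD}, with the identity diagrams of Example~\ref{ex:IdAndPerm} supplying the unit. You have simply unpacked in more detail what the paper states in a single sentence, including the verification of the action axioms via wiring diagram composition and the remark on in/out conventions.
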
 

\begin{proof} 
	This is simply stating that the collection of vector spaces that comprise a circuit algebra $\mathsf{V}=\{\V[T^\tout;T^\tin]\}$ each carry natural symmetric group actions by $S_{T^\tout}$ on the left and $S_{T^\tin}$ on the right, where $T^{\text{out}}, T^\text{in} \subseteq \calI$. Indeed, this is the case, induced by the action of {\em label permuting wiring diagrams} as explained in Example~\ref{ex:IdAndPerm}.
\end{proof} 

\begin{proof}[Proof of Theorem~\ref{thm:main}]
	First, we show that every a circuit algebra $\sV$ admits the structure of a wheeled prop $\widetilde{\Phi}(\sV)=\bV$. By Lemma~\ref{lem:CAPerm}, $\sV$ has an underlying $\calS$-bimodule $\bV=\{V[I;J]\}_{I,J\subseteq \calI}$. To describe a wheeled prop structure on $V$, we need to exhibit a structure map $\gamma: \mathsf{F}\bV \to \bV$. 
	
	Recall from Section~\ref{subsec:WP} that $\mathsf{F}\sV[I;J]= \colim_{[G]\in \calG(I;J)} \bV[G]$. In other words, $\mathsf{F}\sV[I;J]$ is linearly spanned by  isomorphism classes of labelled directed graphs $[G]$ with $\partial G= (I;J)$, where the vertices $v_i \in V(G)$ are decorated with the vector spaces $\sV[\tin(v_i); \tout(v_i)]$. 
	
	The map $\Phi$ from Construction~\ref{map:Phi} assigns to $[G]$ a wiring diagram $D_{[G]}$. By definition of the circuit algebra structure on $\sV$, $D_{[G]}$ induces a linear map  $F_{D_{[G]}}: \bigotimes_{i=1}^r \sV[\tin(v_i); \tout(v_i)] \to \sV[I;J]$. 
	
	The maps $F_{D_{[G]}}$ are natural in $[G]$: any map $[G]\rightarrow [G']$ in $\calG(I;J)$ corresponds to a permutation of vertex order, which is respected by assignment of linear maps in the circuit algebra (Axiom (2) in Definition~\ref{def:CA}) and thus we define the structure map $\gamma$ using the universal property of the colimit, for each pair of label sets $I,J \subseteq \calI$:
	\[\begin{tikzcd} \bV[G] \arrow[d]\arrow[drrr, "F_{D_{[G]}}"] & \\ 
	\mathsf{F}\sV[I,J]=\colim_{[G]\in\calG(I,J)} \bV[G]  \arrow[rrr, swap, "\gamma"] &&& \sV[I;J] \end{tikzcd}\]   
	Thus, $\bV$ is a wheeled prop. It is clear from its construction that  a circuit algebra map $\sV \to \sW$ is automatically also wheeled prop map $\bV \to \bW$ ($\widetilde\Phi$ is natural in $\sV$), and thus we have defined a functor $\widetilde\Phi:\mathsf{CA}\rightarrow \mathsf{wProp}$. 
	
	In the other direction, given a linear wheeled prop $\bW$ we construct a circuit algebra $\widetilde{\Psi}(\bW)=\sW$. First, $\bW$ has an underlying $\calS$-bimodule, which in particular is a collection of vector spaces $\sW[I;J]$, where $I$ and $J$ run over finite subsets of $\calI$. 
	It remains to construct the action maps $$F_D: \sW[A_1^\tout;A_1^\tin]\otimes ... \otimes \sW[A_r^\tout;A_r^\tin] \to \sW[A_0^\tin;A_0^\tout]$$ for each wiring diagram $D=(\calA, p, l)$. 
	The map $\Psi$ assigns to $D$ an isomorphism class of graphs $[G_D]=\Psi(D)$. We define the action $F_D$ as the restriction of $\gamma: \colim_{[G]\in\calG(I,J)} \bW[G]\rightarrow \sW[I;J]$ to the component $\bW[G_D]$.  
	
	The composition axiom -- Axiom (1) -- of Definition~\ref{def:CA} holds by Lemma~\ref{lem:CompSub} and axiom (\ref{alg over monad}) of an algebra over a monad, as $\mu$ captures wiring diagram composition, and $\gamma$ captures the assignment of linear maps to wiring diagrams. The equivariance -- Axiom (2) -- holds by the naturality of $\Psi$ in $G_D$: input set permutations $D\to D'$ correspond to morphisms $G_D \to G_{D'}$ in $\calG$.  The assignment $\widetilde\Psi: \mathsf{wProp}\rightarrow \mathsf{CA}$ is natural in $\bW$ and thus $\widetilde\Psi$ defines a functor.  The fact that $\widetilde\Phi$ and $\widetilde\Psi$ are inverse functors follows from Lemma~\ref{lemma: graphs are wiring diagrams}. 
\end{proof}


\section{Wheeled props as tensor categories}\label{section: categorical description}

In \cite[Chapter V]{MacLane_Props}, Mac Lane introduced \emph{props} as strict symmetric tensor categories whose monoid of objects has a single generator: that is, a symmetric tensor category equipped with a distinguished object $x$ such that every object is a tensor power $x^{\otimes n}$, for some $n\geq 0$. Hence, morphisms in a prop are of the form $f:x^{\otimes n}\rightarrow x^{\otimes m}$. Diagrammatically, such a morphism is illustrated by an $(n,m)$-corolla whose vertex is decorated by $f$ (as on the left in Figure~\ref{fig:propmorphism}). 

Composition of morphisms, also called {\em vertical composition}, is modelled diagrammatically by attaching some of the outputs of one corolla to inputs of another, resulting in directed graphs. Directed graphs are composed the same way. The tensor product of the prop is realised by taking disjoint unions of graphs, and is called {\em horizontal composition}. For examples of both compositions see Figure~\ref{fig:propmorphism}. 

In other words, props are categories in which morphisms are directed graphs, where every edge ``carries'' a copy of the generator $x$. Note that these graphs have no floating loops or closed cycles; a floating edge denotes the identity $\id:x\to x$.  For more details on this point of view, and examples of props, we suggest the survey article~\cite[Section 8]{markl_operads_and_props}. 

\begin{figure}[h]
	\begin{centering}
		\includegraphics[height=4cm]{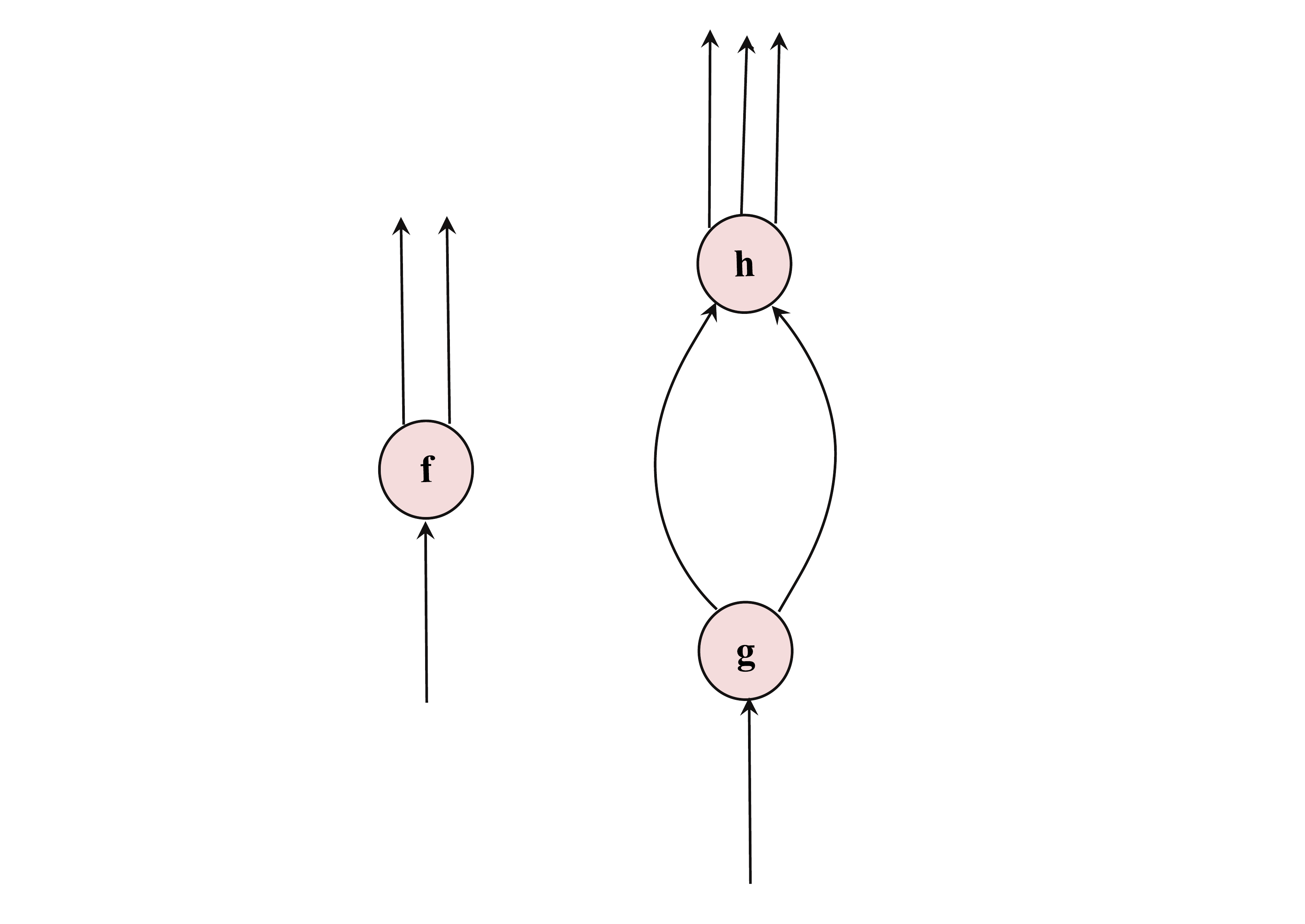}
	\end{centering}
	\caption{The tensor product of morphisms $f$ and $hg$ where $f:x\rightarrow x^{\otimes 2}$ and $hg$ is the composite of morphisms $g:x\rightarrow x^{\otimes 2}$ and $h:x^{\otimes 2}\rightarrow x^{\otimes 3}$  in a prop $\mathsf{P}$. Stacking morphisms next to each other like this is called \emph{horizontal composition}. The composition $hg$ is called \emph{vertical composition}.}\label{fig:propmorphism}
\end{figure}

A wheeled prop is a prop where every object has a dual. This gives rise to a family of linear ``trace'' or ``contraction'' maps \[\begin{tikzcd}\trace^{j}_{i}: x^{* \otimes m}\otimes x^{\otimes n}\arrow[r]& x^{*\otimes m-1}\otimes x^{\otimes n-1}.\end{tikzcd}\] Diagrammatically, contractions are represented by connecting a chosen output of a graph (the $j$th copy of $x$) to a chosen input of the same graph (the $i$th copy of $x^*$), as in Figure~\ref{fig:some morphisms in wheeled prop}.

\begin{figure}[t]
	\begin{centering}
		\includegraphics[height=4cm]{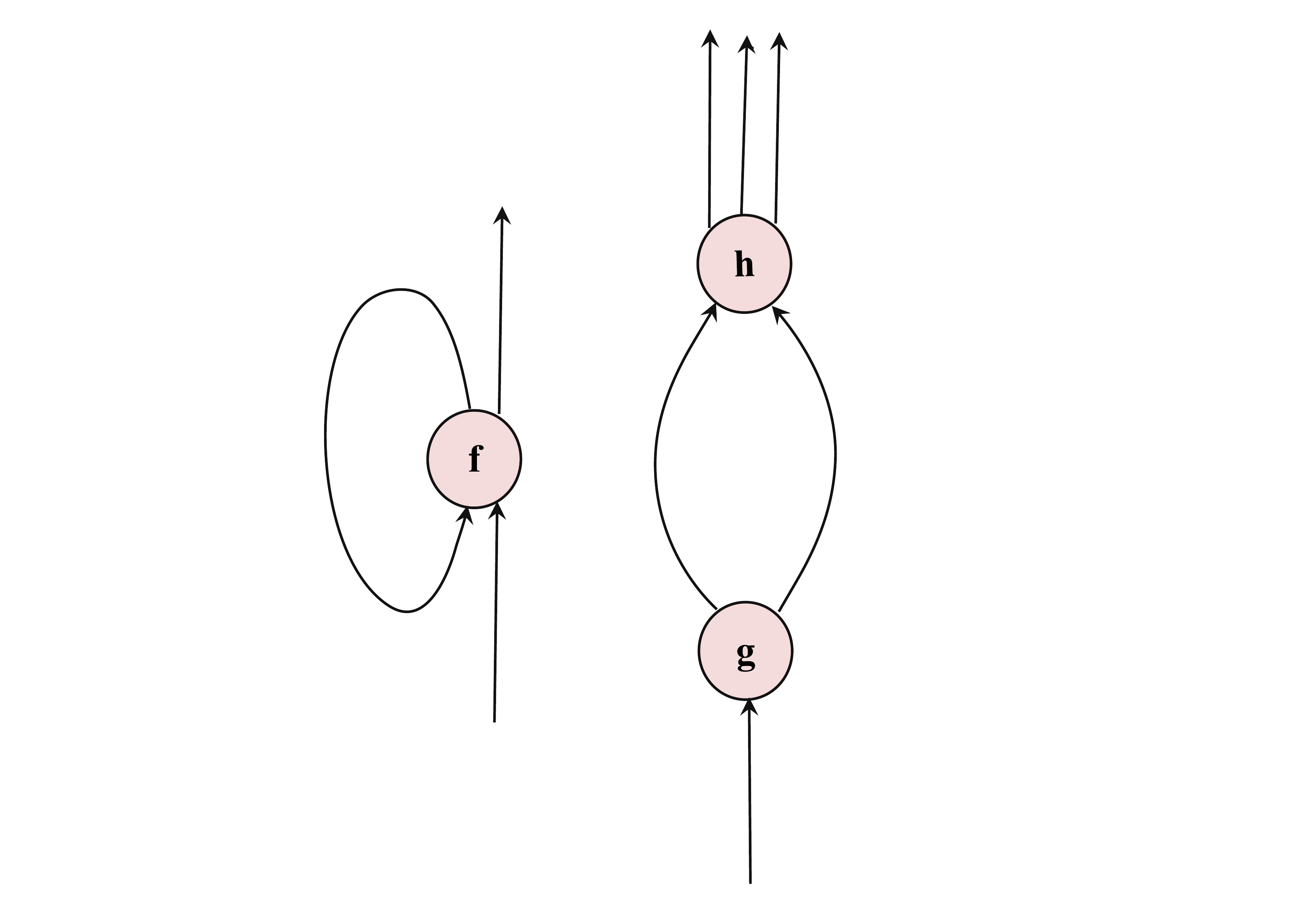}
	\end{centering}
	\caption{The horizontal product $\trace^{1}_{1}f \ast hg$.}\label{fig:some morphisms in wheeled prop}
\end{figure}

We note that in the literature a strict symmetric tensor category with duals is also called a \emph{rigid} symmetric\footnote{Rigid symmetric tensor categories are also called compact closed categories.} tensor category. Saying that such a tensor category has a single generating object is equivalent to saying that the monoid of objects has a single generator $x$. \cite{MR1106898, MR1357057} 

In this section, we present an axiomatic definition of a wheeled prop in line with this categorical view, which is equivalent to the monadic definition given in Definition~\ref{def:WP}. In light of Theorem~\ref{thm:main}, one can equivalently interpret the following as a set of axioms satisfied by circuit algebras. In Section~\ref{section:examples of wheeled props}, we present two prominent examples of wheeled props. Finally, in Section~\ref{subsec:Pivotal} we show that every wheeled prop is, in particular, a strict pivotal category. This gives a fully faithful embedding into the category of pivotal categories, parallel to that between circuit algebras and planar algebras in Proposition~\ref{prop: adjunction PA and CA}.

\medskip

\subsection{Axiomatic definition}\label{subsec:Axiomatic}
\begin{definition}\label{def:baised modular operad} Let $\calI$ denote a fixed countable alphabet. A \textbf{wheeled prop} $\bE:=\left(\bE, \ast,\trace_{i}^{j}\right)$ consists of:
	\begin{enumerate}
		
		\item an $\mathcal{S}$-bimodule $\bE=\{\sE[I ;J]\}$;
		
		\item a \emph{horizontal composition}\[\begin{tikzcd}\ast:\sE[I;J] \otimes \sE[K;L] \arrow[r]& \sE[I\cup K;J\cup L], \end{tikzcd}\]where $I\cap K =\emptyset$ and $J\cap L =\emptyset$;
		
		\item a linear map $1_{\emptyset}:\Bbbk \to \sE[\emptyset;\emptyset]$ called the \emph{empty unit};
		
		\item	 a \emph{contraction} operation\[\label{note:xiij} \begin{tikzcd}\sE[I;J] \arrow[r, "\trace^{j}_{i}"] & \sE[I\setminus\{i\};J\setminus\{j\}],\end{tikzcd}\] for every pair $i\in I$ and $j\in J$;
		
		\item a linear map $$1_i: \Bbbk \to \sE[\{i\};\{i\}],$$ for every $i\in \calI$, called the {\em unit}.
		
	\end{enumerate} This data satisfies a list of axioms that we will present in detail shortly. In particular, the horizontal composition and contractions commute with each other and are associative, equivariant and unital. 
\end{definition} 

\begin{remark}\label{remark: wheeled prop is a prop}
	Wheeled props are, in particular, examples of props (\cite[Example 2.1.1]{mms}). To define vertical composition of morphisms such as the composition of $h$ and $g$ in Figure~\ref{fig:propmorphism}, one combines horizontal compositions and contractions, as in Figure~\ref{fig:dioperadic comp}.  
	
	In fact, for $i\in I,$ $l\in L$, and $I\cap K =\emptyset$ and $J\cap L =\emptyset$, the horizontal composition and contraction operations combine to give an additional \emph{dioperadic composition}\footnote{The ${}_{i}\circ_{l}$ notation means ``identify output $l$ with input $i$.''} denoted ${}_{i}\circ_{l}$, which joins the $l$th output of one graph to the $i$th input of another:
	\[\begin{tikzcd} \sE[I; J] \otimes \sE[K;L] \arrow[r,"{}_{i}\circ_{l}"]\arrow[d,"\ast", swap] & \sE[I\setminus\{i\}\cup K;J\cup L\setminus\{l\}] \\
	\sE[I\cup K;J\cup L]\arrow[ur, "\trace_i^l", swap]&\end{tikzcd}.\] Vertical composition can then be obtained by a horizontal composition followed by iterated contractions. As an example, the morphism $hg$ from Figure~\ref{fig:some morphisms in wheeled prop} is obtained by first taking a horizontal composition of $h\in x^{* \otimes 2}\otimes x^{\otimes 2}$ and $g\in x^{*\otimes 2}\otimes x^{\otimes 2}$ and then applying contractions $\trace_{i_3}^{j_4}$ and $\trace_{i_4}^{j_2}$. See Figure~\ref{fig:dioperadic comp}.
	\begin{figure}[h]
		\begin{centering}
			\includegraphics[height=4.5cm]{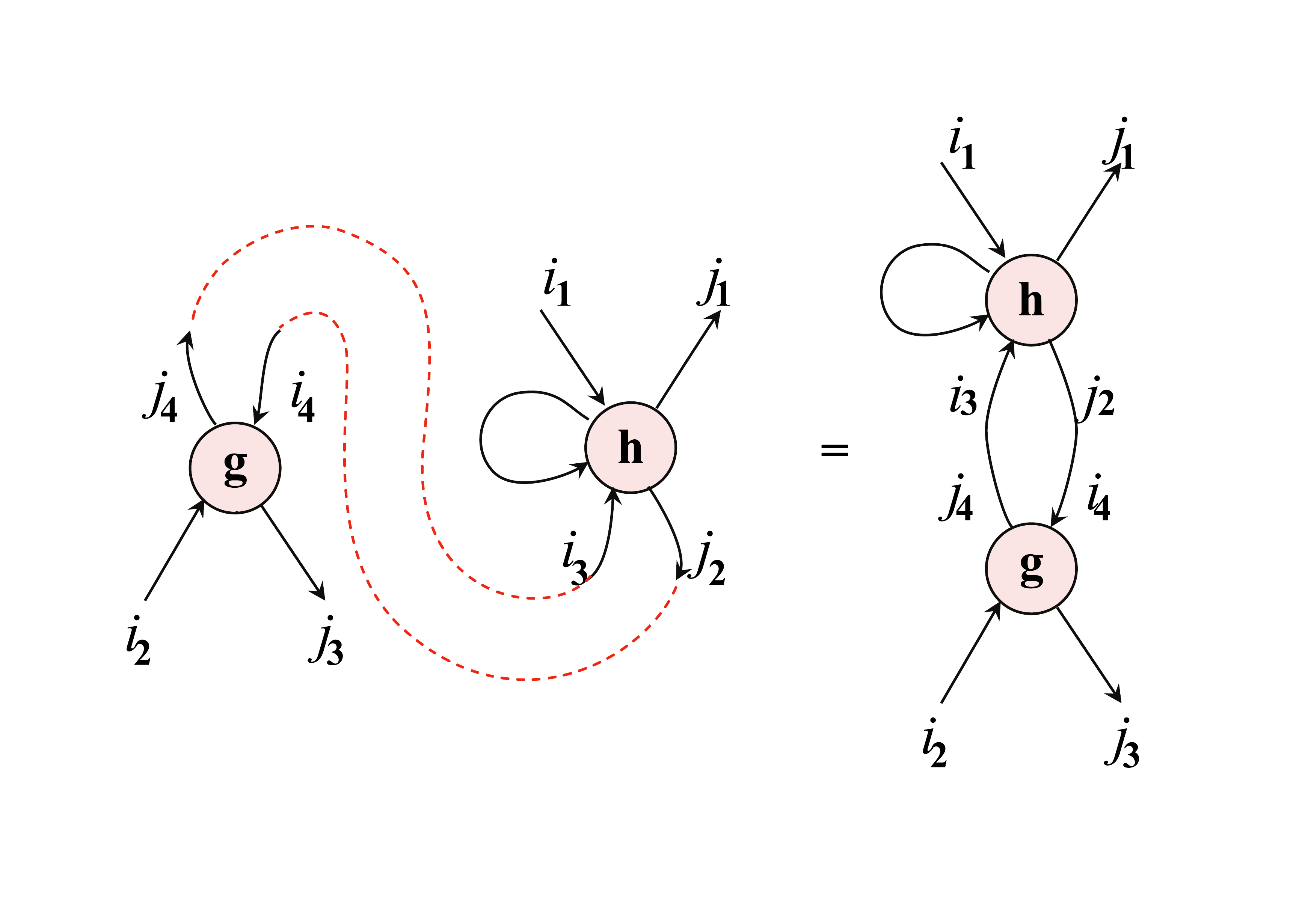}
		\end{centering}
		\caption{The vertical composition $hg$ depicted on the right is horizontal composition followed by two contractions: $\trace^{j_2}_{i_4}\trace^{j_4}_{i_3}(g\ast h)$.}\label{fig:dioperadic comp}
	\end{figure} 
	
\end{remark}

The following is a comprehensive list of axioms satisfied by the horizontal composition, contractions and the units.  Throughout, by an abuse of notation we write ``$\sigma I$'' to indicate that the set $I$ has been ``permuted'' by $\sigma\in\mathcal{S}_{I}$. Of course, $I$ is a set and $\sigma$ is a bijection so as sets $\sigma I=I$, however this notation is useful in practical examples with a naturally ordered alphabet, e.g. when $\calI=\Z_{\geq 0}$. For example, the pair $(\sigma,\tau)\in\calS_{I}\times\calS_{J}$ acts on the vector space $\sE[I;J]$ and we write $(\sigma,\tau):\sE[I;J] \to \sE[\sigma I;J\tau]$.

\medskip
\textbf{H1:} The horizontal composition is \emph{associative} in the sense that the following square commutes: \[\begin{tikzcd} \sE[I;J]\otimes \sE[K;L]\otimes\sE[M;N] \arrow[r, "\ast\otimes id"] \arrow[d, swap, "id \otimes \ast"] & \sE[I\cup K;J\cup L]\otimes\sE[M;N]\arrow[d, "\ast"] \\ 
\sE[I;J]\otimes \sE[K\cup M;L\cup N] \arrow[r, "\ast"] & \sE[I\cup K\cup M;J\cup L\cup N]. \end{tikzcd}\] 

\textbf{H2:} The horizontal composition is \emph{bi-equivariant}. Explicitly, for any two pairs of disjoint finite subsets $I,K$ and $J,L$ of $\calI$, the following square commutes:
\[ \begin{tikzcd} \sE[I;J]\otimes \sE[K;L] \arrow[r, "\ast"]\arrow[d,swap, "(\sigma_1;\tau_1)\otimes (\sigma_2;\tau_2)"]& \sE[I\cup K;J\cup L]\arrow[d, "(\sigma_1\cup\sigma_2;\tau_1\cup\tau_2)"] \\
\sE[\sigma_1I;J\tau_1]\otimes \sE[\sigma_2K;L\tau_2] \arrow[r, "\ast"] & \sE[\sigma_1I\cup\sigma_2K;J\tau_1\cup L\tau_2].\end{tikzcd}\]  
The notation $\sigma_1\cup\sigma_2$ refers to the element of $\calS_{I\cup K}$ which acts as $\sigma_1$ on $I$ and $\sigma_2$ on $K$. Similarly,  $\tau_1\cup\tau_2\in \calS_{J\cup L}$ acts by $\tau_1$ on $J$ and $\tau_2$ on $L$. 

\medskip 
\textbf{H3:}
Horizontal composition is \emph{symmetric}. Let  $\beta$ denote the block permutation $\beta=(12)\langle I,K\rangle \in\mathcal{S}_{I\cup K}$, which swaps the blocks $I$ and $K$ of $I\cup K$. In the same notation, $\gamma=(12)\langle J,L\rangle\in\mathcal{S}_{J\cup L}$ swaps the blocks $J$ and $L$ in $J\cup L$. Then, for any two pairs of disjoint subsets $I,K$ and $J,L$ of $\calI$, the following diagram commutes:
\[ \begin{tikzcd} \sE[I;J]\otimes \sE[K;L] \arrow[r, "\ast"]\arrow[d,swap, "\otimes \text{-swap}"]& \sE[I\cup K;J\cup L]\arrow[d, "(\beta;\gamma)"] \\
\sE[K;L]\otimes \sE[I;J] \arrow[r, "\ast"] & \sE[K\cup I;L\cup J].\end{tikzcd}\]  

\medskip 	
\textbf{H4:} The empty unit $1_{\emptyset}$ is a two-sided unit for the horizontal composition:

\[\begin{tikzcd}
& \arrow[dr,swap,"\cong", swap]\sE[I;J]\arrow[dl,"\cong", swap]\arrow[ddd,"="] &\\
\Bbbk\otimes \sE[I;J]\arrow[d,"1_\emptyset \otimes \id", swap] && \sE[I;J]\otimes \Bbbk\arrow[d,"\id \otimes1_\emptyset"]\\
\sE[\emptyset;\emptyset]\otimes \sE[I;J]\arrow[dr,"\ast", swap] && \sE[I;J]\otimes\sE[\emptyset;\emptyset]\arrow[dl, "\ast"]\\
&\sE[I;J].&
\end{tikzcd}\]

\begin{remark}\label{rmk:WPTensorCat} A wheeled prop $\bE$ is, in particular, a prop, and thus, a symmetric tensor category with a single generating object. The axioms \textbf{H1} -- \textbf{H4} are the axioms that govern the symmetric tensor product on $\bE$. The next set of axioms shows that contractions in $\bE$ are also bi-equivariant, and commute with each other and the horizontal composition, providing the remainder of the rigid symmetric tensor category structure on $\bE$. The unit $1_i$ is the unit for vertical composition -- i.e. composition of morphisms in the tensor category -- which arises as a horizontal composition followed by a contraction. \end{remark}


\begin{remark}\label{rmk:Relabelling} In order to precisely state the bi-equivariance axiom for contractions, one needs to establish that any pair of {\em relabelling} bijections $(f,g):(I\times J)\rightarrow (I'\times J')$ induce natural isomorphisms of the vector spaces $\begin{tikzcd} R_{f;g}:\sE[I;J]\arrow[r]&\sE[I';J']\end{tikzcd}$. We leave it as an exercise to the reader to construct these relabelling isomorphisms from the wheeled prop structure.
\end{remark}

\textbf{C1:} Contraction is \emph{bi-equivariant}: for any pair of non-empty label sets $I,J\subseteq \calI$, the following diagram commutes:
\[\begin{tikzcd}
\sE[I;J] \arrow[rr, "\trace_{i}^{j}"] \arrow[d, "(\sigma;\tau)", swap] && \sE[I\setminus\{i\};J\setminus\{j\}] 
\arrow[d, "R_{\sigma |_{I\setminus\{i\}}; \tau |_{J\setminus \{j\}}}"] \\
\sE[\sigma I;J\tau] \arrow[rr, "\trace^{\tau^{-1}(j)}_{\sigma(i)}"]	&& 
\sE[\sigma I\setminus\{i\};J\tau\setminus\{j\}]
\end{tikzcd}\] 
Here $\sigma\in\mathcal{S}_{I}$, $\tau\in\mathcal{S}_{J}$, and $\sigma |_{I\setminus\{i\}}$ and $\tau |_{J \setminus\{j\}}$ are restrictions of the permutations -- note that in general these are no longer permutations, but relabellings, and $R_{\sigma |_{I\setminus\{i\}}; \tau |_{J\setminus \{j\}}}$ is the induced isomorphism as in Remark~\ref{rmk:Relabelling}. 

\textbf{C2:} Contraction maps commute: given any labelling sets $I,J\subseteq \calI$ with $|I|\geq 2$, $|J|\geq 2$, $i\neq k \in I$ and $j\neq l \in J$, then the operations $\trace_{i}^{j}$ and $\trace_{k}^{l}$ commute:

\[\begin{tikzcd}
\sE[I;J] \arrow[r, "\trace_{i}^{j}"] \arrow[d, "\trace_{k}^{l}" swap]	& \sE[I\setminus\{i\};J\setminus\{j\}]\arrow[d, "\trace_{k}^{l}"] \\
\sE[I\setminus\{k\};J\setminus\{l\}] \arrow[r, "\trace_{i}^{j}" swap] & \sE[I\setminus\{i,k\};J\setminus\{j,l\}].\end{tikzcd}\]

\textbf{HC1:} Horizontal composition and contraction maps commute with one another: for any pairs of disjoint subsets $I,K$ and $J,L$ of $\calI$, and any chosen  $i\in I$, $j \in J$, $k \in K$ and $l\in L$, the following two squares commute.

\[\begin{tikzcd}\sE[I;J]\otimes\sE[K;L] \arrow[r, "\ast"] \arrow[d, swap, "\trace_{i}^{j}\otimes id"]& \sE[I\cup K;J\cup L] \arrow[d, "\trace_{i}^{j}"] \\
\sE[I\setminus\{i\};J\setminus\{j\}]\otimes\sE[K;L] \arrow[r, "\ast"] & \sE[I\setminus\{i\}\cup K;J\setminus\{j\}\cup L]
\end{tikzcd}\]

\[\begin{tikzcd}\sE[I;J]\otimes\sE[K;L] \arrow[r, "\ast"] \arrow[d, swap, "id \otimes \trace_{k}^{l}"]& \sE[I\cup K;J\cup L] \arrow[d, "\trace_{k}^{l}"] \\
\sE[I;J]\otimes\sE[K\setminus\{k\};L\setminus\{l\}] \arrow[r, "\ast"] & \sE[I\cup K\setminus\{k\};J\cup L\setminus\{l\}].
\end{tikzcd}\]

\textbf{HC2:} The units $1_i$ are the units for the dioperadic compositions, which are themselves compositions of horizontal compositions and contractions as defined in Remark~\ref{remark: wheeled prop is a prop}.  Specifically, for every pair of sets $I,J\subseteq \calI$ and labels $i \in I, j \in J$, the following diagrams commute. 

\begin{minipage}{.5\linewidth}
	\[\begin{tikzcd} \Bbbk\otimes \sE[I;J]\arrow[d, "1_i\otimes id", swap] \arrow[r, "\cong"]& \sE[I;J]\\
	\sE[\{i\};\{i\}]\otimes \sE[I;J] \arrow[ur, "\trace^i_i\circ \ast", swap]& \end{tikzcd}\]
\end{minipage}
\noindent\begin{minipage}{.5\linewidth}
	\[\begin{tikzcd} \sE[I;J]\otimes \Bbbk\arrow[d, "id\otimes 1_j", swap] \arrow[r, "\cong"]& \sE[I;J]\\
	\sE[I;J]\otimes \sE[\{j\};\{j\}] \arrow[ur, "\trace^j_j\circ \ast", swap]& \end{tikzcd}\]
\end{minipage}

\medskip

In the context of this definition, morphisms $\begin{tikzcd}\bE \arrow[r, "f"] & \mathbf{E}'\end{tikzcd}$ of wheeled props are tensor functors which respect the contractions.

The axiomatic Definition~\ref{def:baised modular operad} is equivalent to Definition~\ref{def:WP}. The key to understanding this is the translation between graph substitution, and the horizontal composition and contraction operations. Observe that any connected graph $G$ -- which is not a free floating loop or edge -- can be constructed from iterated substitution of {\em elementary directed graphs}: graphs which have either one or two vertices, and one or zero internal edges (edges where both flags are part of a vertex).
An $\mathbf{E}$-decorated graph with one vertex represents a composition of contraction operations, and a graph with two vertices a dioperadic composition -- itself a combination of horizontal composition with contractions -- with possibly additional contractions. Figure~\ref{fig:some morphisms in wheeled prop} shows an example of an elementary graph with one vertex, as well as a graph with two vertices obtained from two elementary graphs. 

Thus, a graph $G$ without floating loops and edges represents a sequence\footnote{There is a corresponding statement for circuit algebras, stating that all wiring diagrams are generated via compositions from ``elementary'' wiring diagrams, which realise disjoint unions, dioperadic compositions and contractions.} of iterated contractions and horizontal compositions. The empty unit is represented by the empty graph; the unit by a floating edge, and the floating loop represents a contraction applied to the unit. It is non-trivial to check that the axioms above are equivalent to the algebra structure over the monad of graph substitutions. In the literature this is often called an equivalence of the unbiased definition (monadic) and the biased definition (axiomatic). A full proof of this equivalence can be found, for example, in \cite[11.9.3, Corollary 11.35]{yj15}.

\subsection{Examples}\label{section:examples of wheeled props}
Wheeled props arise in the literature in a range of contexts, for example
as natural wheeled extensions of the associative and commutative operads in \cite{mms}, and have applications in geometry and physics. We recommend the survey article~\cite{merkulov_survey} for full details.

In this section we present an example that in the authors' opinion illuminates some of the structure encoded in a wheeled prop: namely, a wheeled prop whose {\em category of algebras} is the category of semisimple Lie algebras. An {\em algebra over a wheeled prop} $\bW$ is a morphism from $\bW$ to an {\em endomorphism} wheeled prop $\End (\sE)$, defined in Example~\ref{ex:endo}. The term ``algebra'' is somewhat confusing: based on the definition, algebras over a wheeled prop may be more intuitively named {\em representations} of the wheeled prop.

\begin{remark} In fact, the statement we prove below is stronger than simply describing the wheeled prop for semisimple Lie algebras. In \cite{MR1671737}, Kapranov constructs a prop from any operad $\mathsf{P}$, by adjoining a module of $\mathsf{P}$-algebra forms to the prop generated by $\mathsf{P}$. We will show below that for $\mathsf{P}=\mathsf{Lie}$ (the operad for Lie algebras), considering Kapranov's prop as a wheeled prop, a finite dimensional algebra over it that satisfies non-degeneracy conditions for the $\mathsf{P}$-algebra forms is a semisimple Lie algebra.  We take no credit for originality of this construction -- Kapranov's construction preceded the definition of wheeled prop in \cite{mms} by several years.
	
\end{remark} 

The contraction operations in a wheeled prop can be seen as a generalized trace operation.  We begin with the definition of {\em endomorphism} wheeled props, which makes this precise, as there the contraction maps are the standard trace maps of linear algebra. Given $\Bbbk$-vector spaces $\mathsf{U}$, $\sV$, and $\sW$, a linear map $f: \sV\otimes \mathsf{U} \rightarrow \sW \otimes \mathsf{U}$ is given by $f(v_i \otimes u_j)=\Sigma_{k,m}\alpha_{ij}^{km} w_k \otimes u_m$, where $v_i$, $u_j$ and $w_k$ run over a chosen basis for $\sV$, $\mathsf{U}$ and $\sW$, respectively. Recall that the \emph{trace of $f$ with respect to $\mathsf{U}$} is given by $\trace^{\mathsf{u}}_{\mathsf{u}}f(v_i)=\Sigma_{j,k} \alpha_{ij}^{kj} w_k$. If $\sV$ and $\sW$ are one dimensional, this formula reduces to the trace of the matrix of the linear map $f:\mathsf{U}\rightarrow\mathsf{U}$.

\begin{example}\label{ex:endo}
	For simplicity, set the alphabet $\calI=\mathbb{Z}_{\geq 0}$ to be non-negative integers, and use label sets $\underline{n}=\{1,2,...,n\}$.  Fix a finite dimensional $\Bbbk$-vector space $\sE$ and denote its linear dual by $\sE^*$.  We define a family of vector spaces,  for $(n,m)\in\mathbb{Z}_{\geq0}^{2}$: $$\End(\sE)[\underline{n};\underline{m}]: =\Hom_{\Bbbk}(\sE^{\otimes n},\sE^{\otimes m})\cong (\sE^*)^{\otimes n}\otimes \sE^{\otimes m}.$$   Linear maps can be pre- and post-composed with actions of the symmetric groups $\calS_m$ and $\calS_n$ which permute the tensor factors, making the collection $\operatorname{End}(\sE)=\{\operatorname{End}(\sE)[\underline{n};\underline{m}]\}$ into an $\mathcal{S}$-bimodule.

	Using abbreviated notation, write  $$(\phi_1\otimes\ldots\otimes\phi_n)\otimes (w_1\otimes\ldots\otimes w_m)\in(\sE^*)^{\otimes n}\otimes \sE^{\otimes m}\cong \End(\sE)
	[\underline{n};\underline{m}]$$ as $\phi\otimes w$ for short. The horizontal composition
	\[\begin{tikzcd}\ast:\End(\sE)[\underline{n};
	\underline{m}]\otimes \End(\sE)[\underline{k};\underline{l}]\arrow[r]& \End(\sE)[\underline{n +k}; \underline{m+l}]\end{tikzcd}\] is defined as concatenation $(\phi\otimes w)\ast (\phi'\otimes w') := (\phi\otimes\phi')\otimes (w\otimes w')$, and extended linearly.  
	
	Using the same notation, the contraction maps are defined by $$\trace^{j}_{i}(\phi\otimes w):=\phi_i(w_j)\cdot\big((\phi_1\otimes\dots\otimes \phi_{i-1}\otimes\phi_{i+1}\otimes\dots\otimes \phi_{n})\otimes (w_1\otimes\dots\otimes w_{j-1}\otimes w_{j+1}\otimes\dots\otimes w_{m})\big)$$ for any $1\leq i\leq n$ and $1\leq j\leq m$. In other words, following the standard definition of trace above, the contraction operation \[\trace_{i}^{j}:\operatorname{End}(\sE)[\underline{n};\underline{m}]\rightarrow\operatorname{End}(\sE)[\underline{n-1};\underline{m-1}]\] applied to a linear map $\phi\otimes w$ in $\End(\sE)[\underline{n};\underline{m}]$ given by $\trace_{i}^{j}(\phi\otimes w)$ is the classical trace detailed above with respect to the $i$th copy of $E^*$ and the $j$th copy of $E$, using the isomorphism $\mathsf{E}\cong \mathsf{E}^*$ specified by the choice of basis.
	See \cite[Example 2.1.1]{mms} for full details. 
	
	\begin{figure}[h]
		\includegraphics[height=4cm]{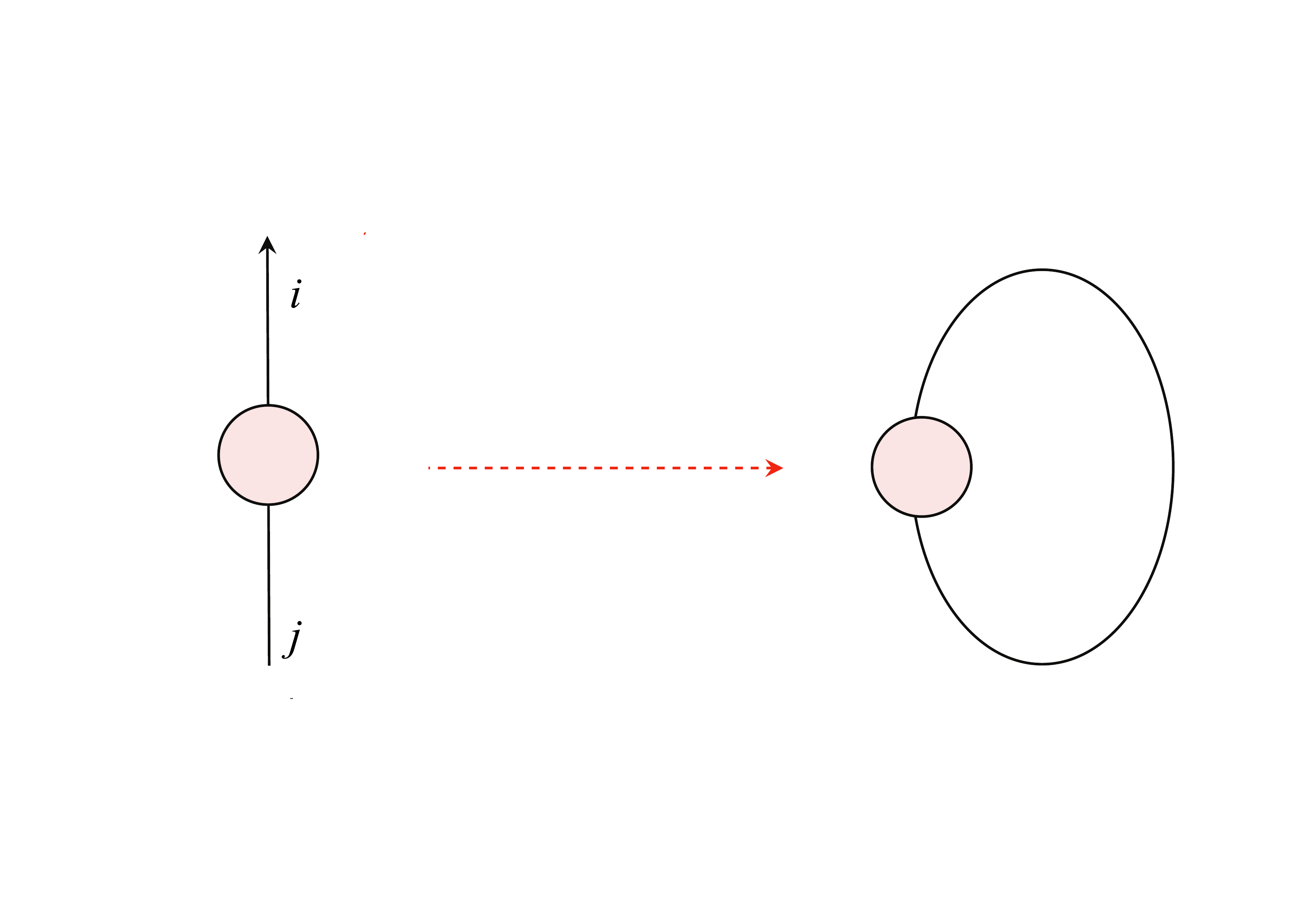}
		\caption{A graphical interpretation of trace. }\label{fig:trace}
	\end{figure}
	
\end{example}

\begin{example}
	The main example for this section is the wheeled prop $\mathsf{Lie}^w$ for semisimple Lie algebras. We assemble the vector spaces $\mathsf{Lie}^w[n;m]$ from two ingredients: the vector spaces $\mathsf{Lie}(n)$ generated by free Lie words on $n$ letters, and formal traces $\trace(p)$ for a Lie word $p$.
	
	Let $\mathsf{Lie}(n)$ denote the $\Bbbk$-vector space spanned by all the Lie words in the free Lie algebra generated by letters $x_1,\ldots, x_n$, with each letter $x_i$ appearing exactly once. Diagrammatically, such Lie words are represented by directed trivalent graphs with $n$ inputs labelled $x_1,\ldots, x_n$, and $1$ output; satisfying that every trivalent vertex has two inputs and one output; and these graphs are considered modulo the antisymmetry and Jacobi relations of Figure~\ref{fig:Lie_Relations}. For the reader familiar with operads, these are the spaces that make up the arity $n$-operations of the operad $\mathsf{Lie}.$

	\begin{figure}
		\centering
		\begin{subfigure}{.5\textwidth}
			\centering
			\includegraphics[height=4cm]{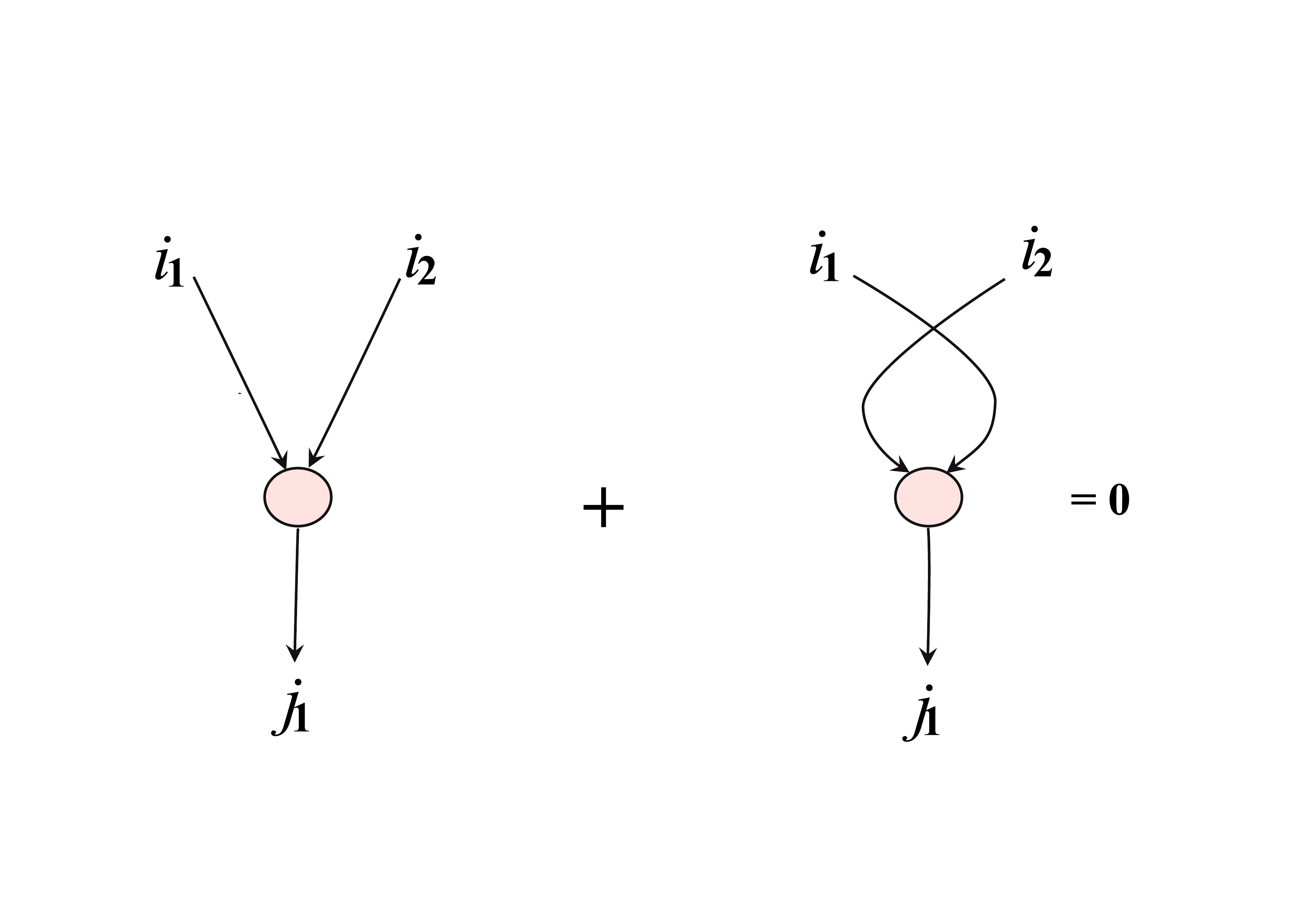}
		\end{subfigure}%
		\begin{subfigure}{.5\textwidth}
			\centering
			\includegraphics[height=4.5cm]{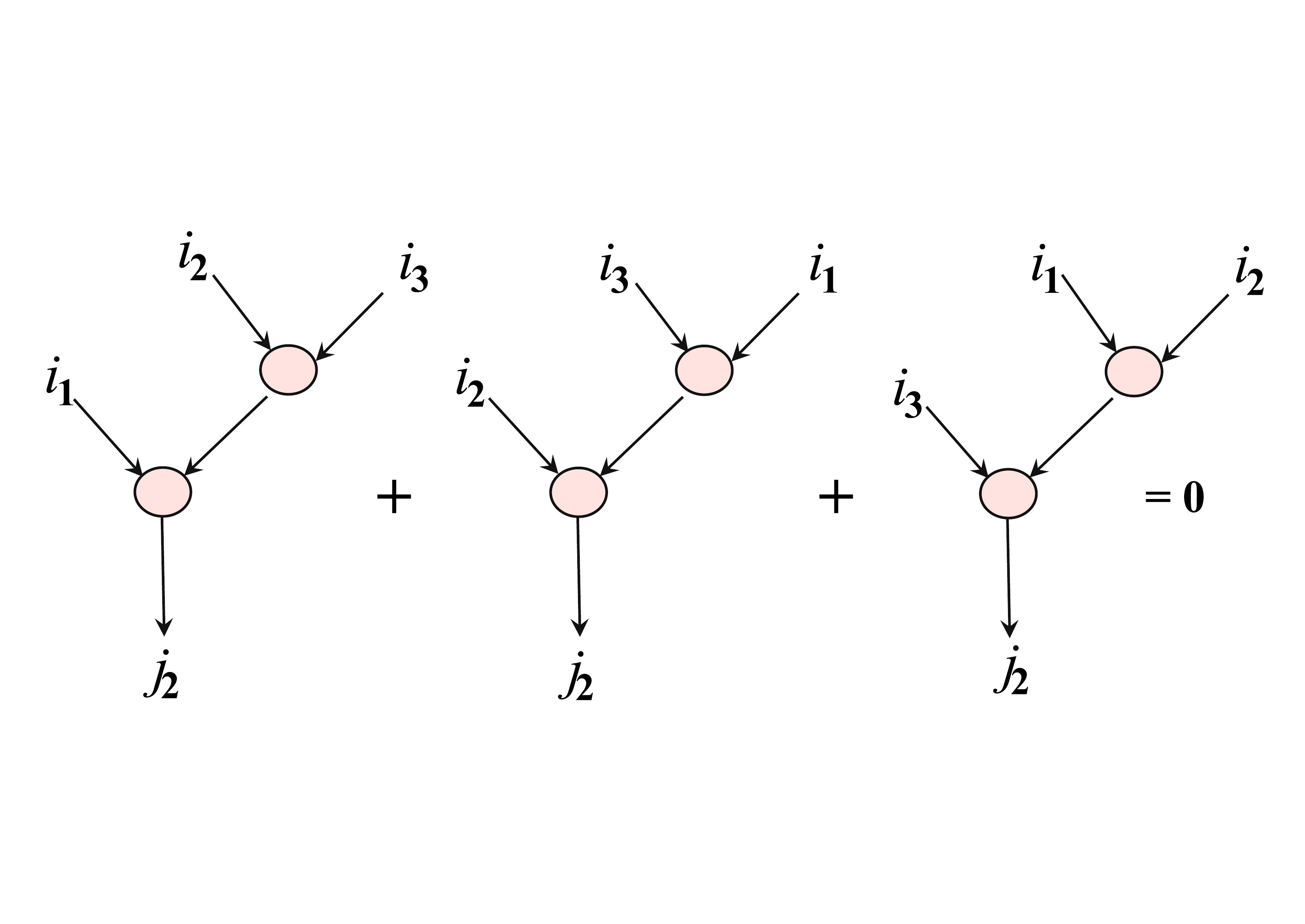}
		\end{subfigure}
		\caption{ The antisymmetry and Jacobi relations, to be understood locally, that is, $i_1$, $i_2$, $i_3$, $j_1$ and $j_2$ each denote trivalent graphs. }\label{fig:Lie_Relations}
	\end{figure}  
	
	The vector space $\mathsf{Lie}(n)$ admits a natural $\calS_n$ action, by permuting the letters $x_i$. The prop\footnote{Precisely, this is the operad for Lie algebras, considered as a prop as in \cite{BV}.} $\mathsf{Lie}$, whose algebras are Lie algebras, is freely generated -- using horizontal and vertical compositions -- by setting $\mathsf{Lie}[n,1]:= \mathsf{Lie}(n)$. We will not describe the prop $\mathsf{Lie}$ in detail, but instead adjoin formal trace operations to obtain the wheeled prop $\mathsf{Lie}^w$.
	
	Denote by $\trace(p)$ the result of identifying the single output of a Lie word $p \in \mathsf{Lie}(n+1)$ with its last input, as in Figure~\ref{fig:trace(p)}. Denote by $\mathsf{Lie}[n;0]$, for $n\geq 0$, the vector space formally spanned by the symbols $\trace(p)$ for $p\in \mathsf{Lie}(n+1)$. The symmetric group $\calS_n$ acts on $\mathsf{Lie}[n;0]$ by permuting the first $n$ letters of $p$. The symbols $\trace(p)$ satisfy the following relations:
	
	\begin{enumerate} 
		\item For $\sigma\in \calS_n$, $\trace(\sigma p)=\sigma\trace(p)$, where $\sigma p$ is the action of $\sigma$ on $p$ via the standard embedding $\calS_n \hookrightarrow \calS_{n+1}$.
		\item For $p \in \mathsf{Lie(n+1)}$ and $q \in \mathsf{Lie(m+1)}$, write $p\circ_{i} q\in \mathsf{Lie}(n+m+1)$ for the dioperadic composition given by gluing the unique output of $q$ to the $i$th input of $p$ (This would be denoted $p\hspace{-0.1cm}\phantom{\circ}_{i}{\circ}_1 q$  in Remark~\ref{remark: wheeled prop is a prop}, we have dropped the indexing of the output of $q$, as it is unique). Then, $\trace(p \circ_{i} q)=\trace(p)\circ_{i} q$, whenever $p\in\mathsf{Lie}(n+1)$, $q\in\mathsf{Lie}(m+1)$ and $i\not=n+1$. 
		\item Finally, the trace operations are cyclically symmetric: $\trace(p \circ_{n+1} q)= \beta\trace(q \circ_{m+1} p)$, where $\beta$ is the block transposition of $[1,...,n]$ and $[1,...,m]$, as shown in Figure~\ref{fig:Rel3}. \end{enumerate}
	
	\begin{figure}
		\centering
		\includegraphics[height=4cm]{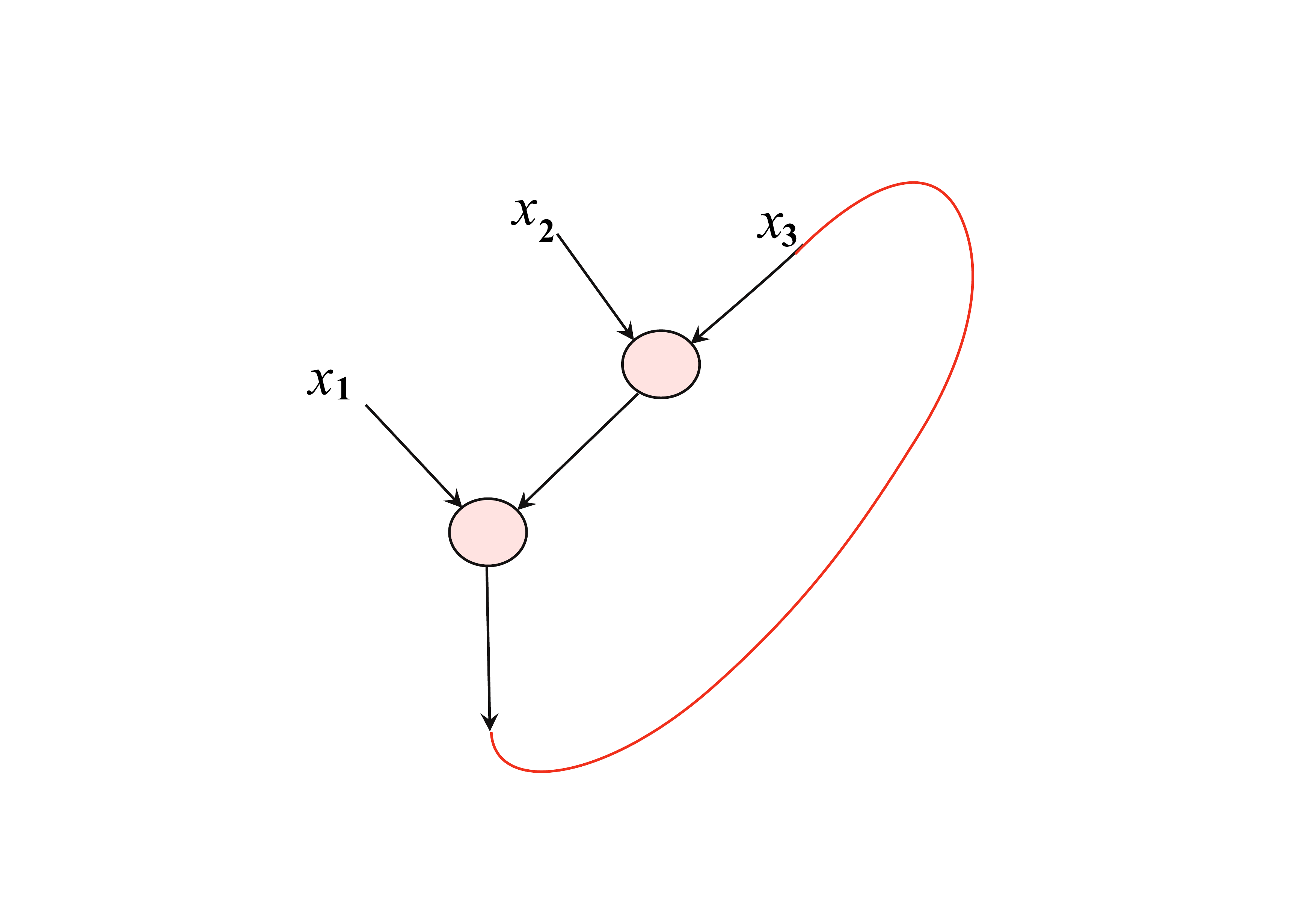}
		\caption{ An element $\trace(p)$ in $\mathsf{Lie}[2;0]$ }\label{fig:Lie_Relations}\label{fig:trace(p)}
	\end{figure}
	
	\begin{figure}
		\centering
		\includegraphics[height=5cm]{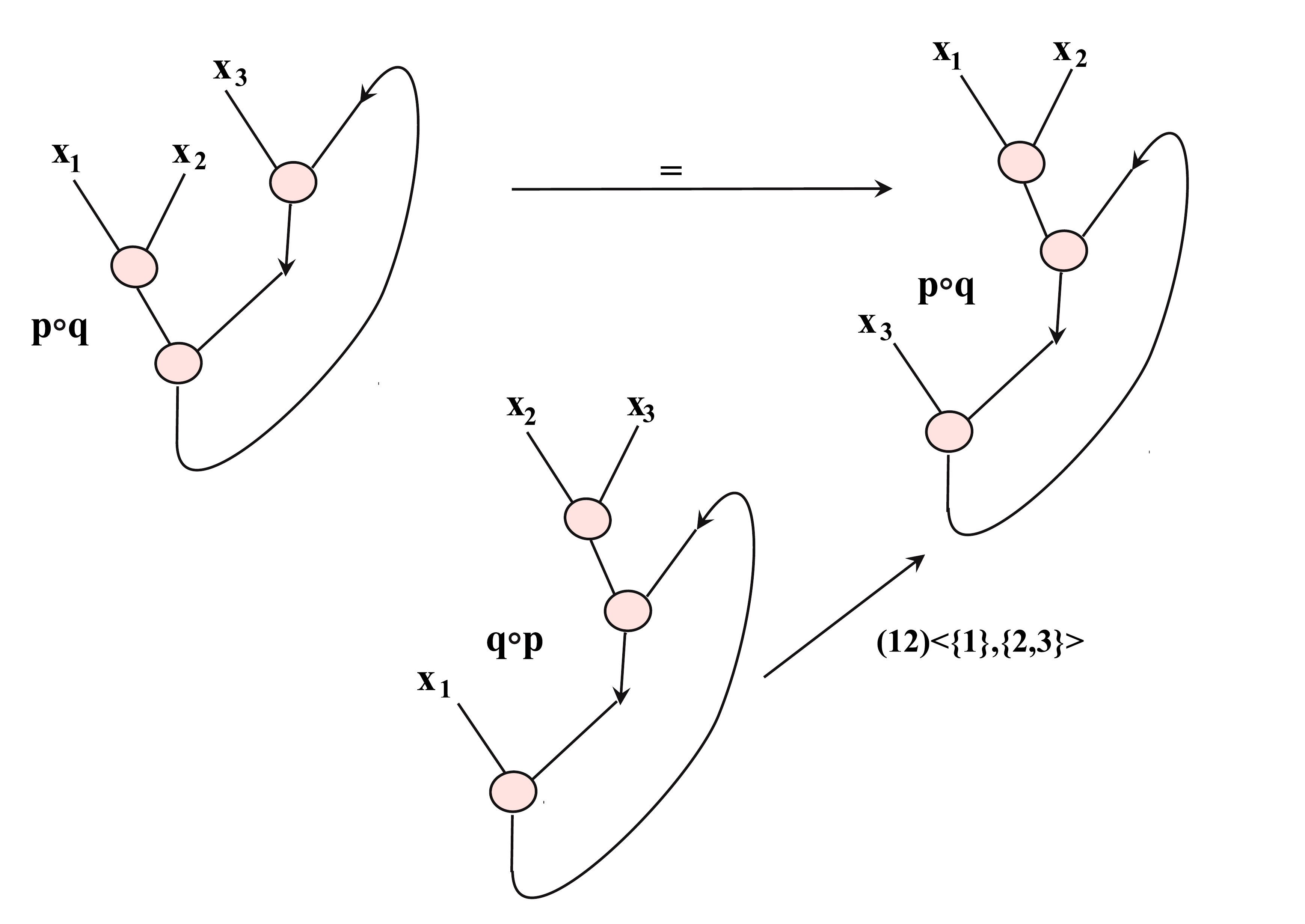}
		\caption{An example of relation $(3)$ for $\mathsf{Lie}[n;0]$.}\label{fig:Rel3}
	\end{figure} 
	
	Now we're ready to define $\mathsf{Lie}^w=\{\mathsf{Lie}^w[n;m]\}$, by first setting
	\[\mathsf{Lie}^w[n;m]=\bigoplus_{A_1\cup\dots\cup A_m\cup B_1\cup\dots\cup B_r} \bigotimes_{i}\mathsf{Lie}(a_i)  \otimes \bigotimes_{j} \mathsf{Lie}[b_{j};0],\] where the sets $A_1\cup\dots\cup A_m\cup B_1\cup\dots\cup B_r$ run over all partitions of the set $\underline{n}$, the numbers $a_i=|A_i|$, $b_j=|B_j|$ denote their cardinalities, and the words in $\mathsf{Lie(a_i)}$ are on the letters $x_\alpha$ for $\alpha \in A_i$. Diagrammatically, these spaces are spanned by graphs whose connected components are the Lie graphs in $\mathsf{Lie}(n)$ or trace graphs $\trace(p)$. The permutation group $\calS_n$ acts by permuting the input labels $x_\alpha$ of the graphs. There is also a right action by $\calS_m$, where $\tau\in \calS_m$ acts as $\tau^{-1}$ on the sets $A_i$ and tensor factors $\mathsf{Lie}(a_i)$.

	Horizontal composition on $\mathsf{Lie}^w$ is given by concatenation of tensor factors, that is, disjoint union of graphs. Contraction on $\mathsf{Lie}^w$ is given by identifying an output of a graph in $\mathsf{Lie}^w[n;m]$ with an input. This is either a dioperadic composition (joining two separate connected components), or a permuted trace symbol (connecting the output of a connected component to one of its own inputs). Next, we need to check that this structure satisfies the axioms of a wheeled prop.
	
	The horizontal composition axioms are easy to verify. Essentially, the contraction axioms follow from the relations (1), (2) and (3) imposed on $\mathsf{Lie}[n;0]$. For example, the equivariance axiom {\bf C1} is true by the relation (1) for traces.
	
	To verify the commutativity of contraction maps, {\bf C2}, one needs to analyse different cases depending on whether the two contraction maps are of the ``dioperadic composition'' or the ``trace'' type: 
	\begin{itemize}
		\item For two dioperadic compositions -- that is, if the two contractions involve three or four separate connected components -- the axiom clearly holds.
		The same is true for a dioperadic composition with a trace map on a separate component; and for two trace maps on separate components. 
		\item For a dioperadic composition between components, and a permuted trace on one of those components, the axiom {\bf C2} holds by the relation (2).
		\item Given two dioperadic compositions between the same pair of Lie graph components, the first is performed as a dioperadic composition, while the second is a permuted trace map on the resulting graph. If each outgoing edge is connected to the last incoming edge of the other graph, then the axiom {\bf C2} follows directly from property (3), see Figure~\ref{fig:Rel3} to visualize this. If the outgoing edges are connected to other incoming edges, then one applies antisymmetry permutations to reduce this to the earlier scenario. 
	\end{itemize}
	
	We leave it as an exercise to the reader to verify the remaining axioms. To summarize, the relations (1), (2) and (3) make $\mathsf{Lie}^w$ a wheeled prop. Note that the converse is also true: the wheeled prop axioms force the relations (1), (2) and (3). In other words, $\mathsf{Lie}^w$ can equivalently be described as the wheeled prop generated by the Lie word $[x_1,x_2]$, denoted $\YGraph$; using horizontal compositions, contractions and units; and modulo the Anti-Symmetry and Jacobi relations:
	\[\mathsf{Lie}^w=\text{WP}\left\langle \YGraph \, | \, \text{Anti-Symmetry}, \text{Jacobi}\right\rangle\] 
	
	In effect, the symbols in $\mathsf{Lie}[n;0]$ parametrise generalized Killing forms $\kappa_n:=\trace([x_1,[x_2,...[x_n, x_{n+1}]...]).$ Indeed, Proposition 3.4.4 \cite{MR1671737} shows that, as vector spaces, $\mathsf{Lie}[n;0]$ is isomorphic to the vector space whose basis is given by the non-cyclic permutations of the $\kappa_n.$

	An \emph{algebra} over the wheeled prop $\mathsf{Lie}^w$ is, by definition, a morphism of wheeled props $\alpha:\mathsf{Lie}^w\rightarrow \End(\sE)$. As such, every algebra is determined by where it sends the wheeled prop generator. The image of $\YGraph$ picks out a bracket in $\End(\sE)[\underline{2};\underline{1}]=\Hom_{\Bbbk}(\sE^{\otimes 2},\sE)$. This is subject to the Anti-Symmetry and Jacobi relations which hold in $\mathsf{Lie}^w$. 
	
	Moreover, the elements $\trace(p)\in \mathsf{Lie}[n;0]$ (which are themselves obtained from $\YGraph$ using horizontal compositions and contractions) are sent to the Killing forms $\kappa_n = \trace([x_1,[x_2,...[x_n, x_{n+1}]...])$ in $\End(\sE^{\otimes n},\Bbbk)$. Note that relation $(3)$ guarantees that this is a symmetric bilinear form. If the target vector space $\sE$ is a finite-dimensional vector space, then the $\kappa_n$'s are the Killing forms $$x_1\otimes\ldots \otimes x_n\mapsto \text{tr}(\text{ad}(x_1)\ldots\text{ad}(x_n)).$$ In this finite dimensional case it follows that an algebra $\alpha: \mathsf{Lie}^w\rightarrow \End(\sE)$ that takes $\trace([x_1,[x_2,x_3]])$ to a non-degenerate form makes $\sE$ into a semisimple Lie algebra.
\end{example}

\subsection{Wheeled props and pivotal categories}\label{subsec:Pivotal}
A \emph{pivotal category} is a particular kind of tensor category with a notion of dual. That is, a tensor category $\mathcal{C}$, equipped with a (strict) contravariant functor of monoidal categories $(-)^{*}$, with $(-)^{**}=\text{id}_{\mathcal{C}}$, and a family of maps $\epsilon_{c}:c\otimes c^*\rightarrow I$ for each $c\in\mathcal{C}$ (here $I$ is the unit object of $\mathcal{C}$), which satisfy axioms $P1,P2$ and $P3$ of  \cite[Definition 1.3]{MR1020583} (a more general version can be found in \cite{MR1250465}). 

As mentioned in the introduction, the category of planar algebras is equivalent to the category of pivotal categories with a symmetrically self-dual generator \cite{MR2559686}. It is a straightforward exercise to check that a wheeled prop is, in particular, a pivotal category with a single generator; since we couldn't find any statement of this fact in the literature we include a proof sketch here. There is no expectation however for the opposite direction to hold, i.e. not every pivotal category is a wheeled prop. 

\begin{prop}\label{prop: wheeled props are pivcat}
	There exists a fully faithful embedding $\rho: \mathsf{wProp}\hookrightarrow \mathsf{PivCat}$ from the category of linear wheeled props to the category of linear pivotal categories with a single generator. 
\end{prop}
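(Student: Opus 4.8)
The plan is to realize $\rho$ as the identity on underlying categories, equipping each wheeled prop with a canonical pivotal structure, and then to verify that this assignment is functorial and fully faithful. The starting point is Remark~\ref{rmk:WPTensorCat}: a wheeled prop $\bE$ is in particular a strict symmetric monoidal category with a single generating object $x$, whose tensor product is the horizontal composition $\ast$ (governed by \textbf{H1}--\textbf{H4}) and whose unit is the empty unit $1_\emptyset$. Objects are recorded combinatorially by the bimodule indexing: $\sE[I;J]$ is the space of morphisms with source indexed by the incoming labels $I$ and target by the outgoing labels $J$.

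First I would define the contravariant duality functor $(-)^*$. Using the bimodule description, an object is recorded by a pair of label sets and dualizing simply swaps the incoming and outgoing labels; hence $(-)^*$ is visibly strictly involutive, $(-)^{**}=\id_{\bE}$. On morphisms it is the transpose $\sE[I;J]\to\sE[J;I]$, which in a compact closed category is built from the evaluation and coevaluation maps. These in turn come from the wheeled prop data: for the generator, $\epsilon_x\colon x\otimes x^*\to 1_\emptyset$ is a single contracted edge, i.e. $\trace$ applied to a unit $1_i$, and the coevaluation arises dually from $1_i$; for a general object $c=x^{\otimes n}$ one takes the horizontal composite of $n$ copies of $\epsilon_x$, suitably permuted by the symmetry. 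This produces the family $\epsilon_c\colon c\otimes c^*\to 1_\emptyset$ required of a pivotal structure.

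Next I would check the pivotal axioms \textbf{P1}--\textbf{P3} of \cite[Definition 1.3]{MR1020583}. Under the dictionary above each becomes a wheeled prop axiom: the zig-zag (snake) identities follow from the unit axioms \textbf{HC2} together with the commutation \textbf{HC1} of contraction with horizontal composition; compatibility of the duality with $\ast$ follows from \textbf{H1}--\textbf{H3}; and naturality/equivariance of the structure maps is \textbf{C1} with \textbf{C2}. Since a rigid symmetric strict monoidal category already carries a canonical (spherical, hence pivotal) structure, this step is essentially formal once the chosen $\epsilon_c$ are matched with the contractions. On morphisms, $\rho$ sends a wheeled prop morphism -- a symmetric tensor functor commuting with contractions (Definition~\ref{def:baised modular operad}) -- to the evident pivotal functor, which preserves duals and evaluations precisely because it preserves $\ast$, $\trace$ and the units.

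The heart of the argument, and the main obstacle, is full faithfulness. Faithfulness is immediate, as $\rho$ is the identity on the underlying morphism spaces. For fullness I would show that any pivotal functor $F\colon\rho(\bE)\to\rho(\bE')$ is already a morphism of wheeled props: by Remark~\ref{remark: wheeled prop is a prop}, every contraction $\trace^{j}_{i}$ is obtained from a horizontal composition with a unit followed by an evaluation, and a pivotal functor preserves tensor products, units and evaluations; hence $F$ automatically commutes with all $\trace^{j}_{i}$ and with $\ast$, which is exactly the defining condition of a wheeled prop morphism. The delicate point is to confirm that the hom-sets are taken in the subcategory $\mathsf{PivCat}$ of pivotal categories \emph{with a single generator}, so that the pivotal functors between the images $\rho(\bE)$ and $\rho(\bE')$ are neither more nor fewer than the structure-preserving functors identified here; pinning this down is where the real care is needed.
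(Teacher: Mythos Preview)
Your approach is essentially the same as the paper's: realize the wheeled prop as a rigid symmetric tensor category on a single generator $x$, exhibit the duality and evaluation maps via the unit and the contractions, and then check the pivotal axioms \textbf{P1}--\textbf{P3} against the wheeled prop axioms (the paper singles out \textbf{P1} and appeals to \textbf{H1} plus iterated contractions, exactly as you do). Your treatment of full faithfulness is in fact more careful than the paper's --- the paper simply observes that a wheeled prop morphism is a monoidal functor between rigid tensor categories and hence a pivotal functor, and declares the inclusion fully faithful without separately arguing fullness; your argument that a pivotal functor automatically preserves contractions (since $\trace^{j}_{i}$ factors as a horizontal composition with a unit followed by an evaluation) supplies the missing half.
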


\begin{proof}
	Let $\mathsf{A}$ be a linear wheeled prop with generating object $x$ (as in Remark~\ref{rmk:WPTensorCat}). We will show directly that $\mathsf{A}$ is a pivotal category. Objects in $\mathsf{A}$ are generated by a single object $x$ -- the ``colour'' of all outgoing edges of a graph. The {\em dual} of $x$ colours incoming edges, and $(x^{\otimes n})^* =(x^*)^{\otimes n}$.  Since the symmetric tensor product $\otimes$ on $\mathsf{A}$ is horizontal composition,  it is clear that $(-)^*$ is a tensor contravariant functor on $\mathsf{A}$. The trace map $\epsilon_x:x\otimes x^*\rightarrow x^{0}=I$ is given by the contraction $\epsilon_x=\trace_{1}^{1}$, and can be generalised to all objects by iterated applications of contractions, e.g. $\epsilon_{x^{\otimes n}}: x^{\otimes n}\otimes (x^{\otimes n})^*\rightarrow I$. 
	
	The axioms $P1, P2$ and $P3$ of \cite{MR1020583} now follow from the axioms of a wheeled prop. As just one example, the axiom $P1$ states that for any $x^{\otimes k}, x^{\otimes m}$ in $x^{\otimes n}$ the following diagram commutes: 
	\[\begin{tikzcd} 
	(x^{\otimes k}\otimes x^{\otimes m})\otimes (x^{\otimes m*}\otimes x^{\otimes k*})\arrow[ddd] \arrow[r]& x^{\otimes k}\otimes (x^{\otimes m}\otimes (x^{\otimes m*}\otimes x^{\otimes k*}))\arrow[r]& x^{\otimes k}\otimes( (x^{\otimes m}\otimes x^{\otimes m*})\otimes x^{\otimes k*}) \arrow[d]\\
	&& x^{\otimes k}\otimes (I\otimes x^{\otimes k*}) \arrow[d]\\
	&&x^{\otimes k}\otimes x^{\otimes k*}\arrow[d]\\
	(x^{\otimes k}\otimes x^{\otimes m})\otimes (x^{\otimes k}\otimes x^{\otimes m})^* \arrow[rr]&& I
	\end{tikzcd}\] One can check that this indeed holds in any wheeled prop $\mathsf{A}$, by axiom \textbf{H1} and iterated applications of the contraction operations. 
	The remaining axioms follow in a similar manner. 
	
	As a morphism $f:\mathsf{A}\rightarrow \mathsf{B}$ between wheeled props is, in particular, a monoidal functor between rigid tensor categories, it is also a morphism between pivotal categories. It follows that the category of $\mathsf{wProp}$ is subcategory of $\mathsf{PivCat}$ and the natural inclusion gives a fully faithful embedding $\mathsf{wProp}\hookrightarrow \mathsf{PivCat}$. 
\end{proof} 

\begin{remark}
	Pivotal categories are strictly more general than wheeled props and there is no claim that the functor $\rho$ is an equivalence. For formal reasons, there exists an adjoint to $\rho$ (similiar to \cite[Proposition 5.2]{MR1357057}). This leads one to question if and when one can assemble the equivalence between circuit algebras and linear wheeled props with the classification of planar algebras in terms of pivotal categories into a commutative diagram:
	\[\begin{tikzcd} \mathsf{CA} \arrow[d, shift left = 1, hook]\arrow[r, "\cong"] & \mathsf{wProp} \arrow[l]\arrow[d, shift right=1, hook]\\
	\mathsf{PA}\arrow[r, "\cong"] \arrow[u, shift left=1] & \mathsf{PivCat} \arrow[l]\arrow[u, shift right=1]
	\end{tikzcd}\] This problem remains open and will be pursued in future work. Moreover, the category of pivotal categories is naturally a $2$-category and one would like to see the equivalences promoted to equivalences of $2$-categories. It is not known if wheeled props have a natural $2$-category structure, though it is likely, as it is already known that props admit the structure of a $2$-monoid (See~\cite[Section 8]{MR2559644} or \cite{MR2116328}). 
	
\end{remark}

\bibliographystyle{amsalpha}
\bibliography{circuit}
\end{document}